\documentclass[11pt]{amsart}

\usepackage[T1]{fontenc}
\usepackage{lmodern}
\usepackage{microtype}
\usepackage{amsmath,amssymb,mathtools,mathrsfs}
\usepackage{enumitem}
\usepackage[colorlinks=true,citecolor=blue,linkcolor=blue,urlcolor=blue]{hyperref}
\usepackage[nameinlink,capitalise,noabbrev]{cleveref}

\newtheorem{theorem}{Theorem}[section]
\newtheorem{proposition}[theorem]{Proposition}
\newtheorem{lemma}[theorem]{Lemma}
\newtheorem{corollary}[theorem]{Corollary}

\newtheorem*{theoremA}{Theorem A}
\newtheorem*{theoremB}{Theorem B}
\newtheorem*{theoremC}{Theorem C}
\newtheorem*{theoremD}{Theorem D}
\theoremstyle{definition}
\newtheorem{definition}[theorem]{Definition}

\theoremstyle{remark}
\newtheorem{remark}[theorem]{Remark}

\DeclareMathOperator{\argmax}{argmax}
\DeclareMathOperator{\supp}{supp}
\DeclareMathOperator{\proj}{proj}
\DeclareMathOperator{\intt}{int}
\newcommand{\Prob}{\mathcal M}
\newcommand{\Inv}{\mathcal M}
\newcommand{\Ker}{\mathcal K}
\newcommand{\Eq}{\mathsf{Eq}}
\newcommand{\R}{\mathbb R}
\newcommand{\N}{\mathbb N}
\newcommand{\cE}{\mathcal E}
\newcommand{\cF}{\mathcal F}
\newcommand{\cO}{\mathcal O}
\newcommand{\cC}{\mathcal C}
\newcommand{\GammaT}{\Gamma_T}
\newcommand{\OmegaT}{\Omega_T}
\newcommand{\Ptop}{\Pi_{\mathrm{top}}}
\newcommand{\Pmeas}{\Pi}
\newcommand{\eps}{\varepsilon}
\newcommand{\weakstar}{\mathrm{w}^*}

\title[Nonlinear thermodynamic formalism for correspondences]
  {Nonlinear Thermodynamic Formalism for Correspondences}

\author{Dingxuan Tang}
\address{College of Science, Northwest A\&F University, Yangling, Shaanxi, China}
\email[Dingxuan Tang]{usagi.tang@gmail.com}
\author{Rui Yang}
\address{School of Mathematics, Northwest University, Xi'an, Shaanxi, China}
\email[Rui Yang]{zkyangrui2015@163.com}
\author{Zhiming Li}
\address[Corresponding author: Zhiming Li]{School of Mathematics, Northwest University, Xi'an, Shaanxi, China}
\email[Zhiming Li]{china-lizhiming@163.com}
\subjclass[2010]{Primary 37D35, 37A35; Secondary 37C30, 54C60}
\keywords{correspondence, nonlinear pressure, variational principle,
transition probability kernel, equilibrium pair, Legendre duality}

\begin{document}
\raggedbottom

\begin{abstract}
We develop a nonlinear thermodynamic formalism for correspondences on compact
metric spaces. A continuous energy is evaluated on empirical measures of
successive pairs of states. The measure-theoretic pressure of a stationary
transition pair is the sum of its kernel entropy and the energy of the joint
distribution of two successive states. For forward
expansive correspondences satisfying an ergodic-abundance condition, we prove a
variational principle and the existence of nonlinear equilibrium pairs. We further prove that limits
of the two-coordinate projections of asymptotically maximizing Gibbs ensembles
are averages of the two-coordinate distributions of equilibrium pairs.
When the energy depends on finitely many potentials on the space of admissible
pairs, Legendre regularity assumptions allow us to identify nonlinear equilibrium
pairs among linear equilibrium pairs through a self-consistency equation and
a global maximization condition. Under these assumptions, real analyticity
in one dimension and uniqueness of linear equilibrium pairs imply finiteness
of the nonlinear equilibrium set. Finite-state examples
exhibit continuous and first-order transitions, metastability, and phases that
have identical one-coordinate marginals but distinct two-coordinate
distributions. A criterion based on Taylor expansions determines the
associated critical exponents.
\end{abstract}

\maketitle
\tableofcontents
\section{Introduction}

A correspondence $T$ on a compact metric space $X$ assigns  each point
$x\in X$  to a nonempty closed subset $T(x)\subset X$.  It  describes a
dynamics in which the next state need not be uniquely determined by the current
one.  The classical entropy--pressure--equilibrium framework under
expansiveness and specification properties goes back to Ruelle~\cite{ruelle-statistical-mechanics}.
Topological entropy for set-valued functions was developed in
\cite{kelly-tennant-set-valued-entropy}, and the orbit-space viewpoint used
there is closely related to the one adopted below.  Li, Li, and
Zhang~\cite{li-li-zhang-correspondences} developed a linear
thermodynamic formalism for such systems.  Their equilibrium object is a pair
$(\mu,Q)$ consisting of a transition probability kernel $Q$ supported by $T$
and a $Q$-stationary probability measure $\mu$.  They proved, in particular, a
variational principle for forward expansive correspondences.

For single-valued systems, the quadratic pressure and generalized Curie--Weiss
case were studied by Leplaideur and Watbled~\cite{leplaideur-watbled-quadratic}.
Buzzi, Kloeckner, and Leplaideur~\cite{buzzi-kloeckner-leplaideur-nonlinear}
developed a general formalism in which a continuous energy on probability
measures replaces the integral of a potential. Related weighted,
higher-dimensional, and Carath\'eodory-type variants appear in
\cite{barreira-holanda-higherdim,feng-huang-weighted-pressure,
yang-chen-zhou-weighted,ding-wang-nonlinear-pressure}.
We develop the corresponding theory for multivalued dynamics.

For a compact metric space $Y$, write $C(Y)=C(Y,\R)$ for  the space of  continuous
functions on $Y$,  equipped with the supremum $\|\cdot\|_\infty$. Let $\Prob(Y)$ denote the Borel probability
measures on $Y$, endowed with the weak-star topology in the sense that $\lambda_n\to\lambda$
if $$\int_Y f\,d\lambda_n\to\int_Y f\,d\lambda.$$ for every $f\in C(Y)$.
All convergence of probability measures below is in this topology unless
otherwise stated. For a Borel map $f:Y\to Z$ to a compact metric space $Z$, the pushforward of
$\lambda\in\Prob(Y)$ is defined by
$(f_*\lambda)(B):=\lambda(f^{-1}(B))$ for Borel $B\subseteq Z$.
If $S:Y\to Y$ is
continuous, write
\[
 \Inv(Y,S):=\{\nu\in\Prob(Y):S_*\nu=\nu\}
\]
for the set of $S$-invariant measures on $Y$.

We  first present our main results by clarifying several concepts. Let 
$$\GammaT:=\{(x,y)\in X^2:y\in T(x)\}$$
be the space of two-coordinate orbits defined 
in \cite[Section~2]{li-li-zhang-correspondences}. A  continuous potential for a
correspondence  depends on an admissible transition $(x,y)$ with
$y\in T(x)$, rather than on $x$ alone. Following \cite{buzzi-kloeckner-leplaideur-nonlinear, li-li-zhang-correspondences},  our energy is a continuous
map
\[
  \cE:\Prob(\GammaT)\longrightarrow\R,
\]
For an admissible orbit segment
$\boldsymbol{x}=(x_0,\ldots,x_n)$, the empirical measure of successive pairs is given by
\[
  L_n(\boldsymbol{x})
  :=\frac1n\sum_{j=0}^{n-1}\delta_{(x_j,x_{j+1})},
\]
where $\delta_(x,y)$ is the Dirac measure concentrated at the point $(x,y)\in \Gamma_T.$
Roughly speaking, the upper and lower nonlinear topologial pressures are  defined   by computing  the sum of these nonlinear weights
$\exp(n\cE(L_n(\boldsymbol{x})))$ over the  separated orbit segments.
If $Q$ is supported by $T$ and $\mu Q=\mu$, set
\[
  \gamma_{\mu,Q}(dx,dy):=\mu(dx)Q_x(dy).
\]
This is the joint distribution of the initial state and the next state of
the stationary Markov process with transition kernel $Q$ and stationary
measure $\mu$ \cite[Definition~5.8]{li-li-zhang-correspondences}. 
We write $\Ker(X;T)$ for the Borel transition kernels supported by $T$ and
$\Inv(X,Q):=\{\mu\in\Prob(X):\mu Q=\mu\}$, where
$(\mu Q)(A):=\int_X Q_x(A)\,d\mu(x)$ for every Borel set $A\subseteq X$.
Our nonlinear measure-theoretic pressure at $(\mu,Q)$ is defined by
\[
  \Pmeas_{\cE}(T;\mu,Q)
  :=h_\mu(Q)+\cE(\gamma_{\mu,Q}).
\]
The first main result  establishes a variational principle by showing the the nonlinear
topological pressure of an erergy  equals  the supremum of nonlinear measure-theoretic pressure over stationary transition pairs. 

\begin{theoremA}[Nonlinear variational principle]
Let $T$ be a forward expansive correspondence on a compact metric space and let
$\cE\in C(\Prob(\GammaT),\R)$.  If $(T,\cE)$ has an abundance of ergodic path
measures, then
\begin{equation*}
  \underline\Pi_{\mathrm{top}}^{\cE}(T)
  =\overline\Pi_{\mathrm{top}}^{\cE}(T)
  =\sup_{\substack{Q\in\Ker(X;T)\\ \mu\in\Inv(X,Q)}}
    \bigl\{h_\mu(Q)+\cE(\gamma_{\mu,Q})\bigr\}.
\end{equation*}
Their common value is dentoed by $\Ptop^{\cE}(T)$.
\end{theoremA}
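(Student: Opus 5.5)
We prove the two inequalities
\[
\sup_{(\mu,Q)}\bigl\{h_\mu(Q)+\cE(\gamma_{\mu,Q})\bigr\}\le \underline\Pi^{\cE}_{\mathrm{top}}(T)
\]
and
\[
\overline\Pi^{\cE}_{\mathrm{top}}(T)\le \sup_{(\mu,Q)}\bigl\{h_\mu(Q)+\cE(\gamma_{\mu,Q})\bigr\}.
\]
Since $\underline\Pi\le\overline\Pi$ trivially, together they give the claim. The guiding idea is the Buzzi--Kloeckner--Leplaideur reduction of nonlinear pressure to linear pressure localized near a measure. We pass to the orbit space $\OmegaT\subset X^{\N}$ with the shift $\sigma$. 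Stationary pairs $(\mu,Q)$ correspond to $\sigma$-invariant Markov path measures $P_{\mu,Q}$, with $h_\mu(Q)=h_{P_{\mu,Q}}(\sigma)$ and $(\pi_{0,1})_*P_{\mu,Q}=\gamma_{\mu,Q}$ (as in \cite{li-li-zhang-correspondences}). The empirical measure $L_n(\boldsymbol x)$ is then the pushforward under $\pi_{0,1}$ of the Birkhoff empirical measure of a point of $\OmegaT$. Forward expansiveness makes $\sigma$ expansive on $\OmegaT$, so the entropy map is upper semicontinuous and separated-set counts at a fixed small scale compute entropy.

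\textbf{Upper bound.} We follow Misiurewicz's proof of the linear variational principle, carried out on the level of empirical measures. Fix $\eps>0$ and cover the compact set $\Prob(\GammaT)$ by finitely many balls $B_1,\dots,B_k$ of radius $\delta$, where $\delta$ is chosen so that $\cE$ varies by less than $\eps$ on each. For an $(n,\eta)$-separated set $E_n$ of orbit segments, split it as $E_n=\bigcup_i E_n^i$ according to which ball contains $L_n(\boldsymbol x)$. Then
\[
\sum_{E_n} e^{n\cE(L_n)}\le k\max_i \#E_n^i\, e^{n(\sup_{B_i}\cE)},
\]
so the factor $k$ disappears at exponential scale. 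Now choose a subsequence realizing $\limsup\frac1n\log\#E_n^{i}$ for a fixed $i$, and build the usual measures $\frac1n\sum_{j<n}\sigma^j_*(\text{uniform on } E_n^i)$ on $\OmegaT$. A weak-star limit $P$ is $\sigma$-invariant, its two-marginal lies in $\overline{B_i}$, and Misiurewicz's argument gives $h_P(\sigma)\ge\limsup\frac1n\log\#E_n^i$. Finally we replace $P$ by a stationary Markov pair $(\mu,Q)$ with the same $\gamma$ and no smaller entropy. Take $\mu$ to be the first marginal of $P$ and $Q$ the disintegration of $(\pi_{0,1})_*P$. Then $h_\mu(Q)$ equals the conditional entropy of $x_1$ given $x_0$, which dominates $h_P(\sigma)$; I expect the paper has a lemma of this kind from \cite{li-li-zhang-correspondences}. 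This yields the upper bound, and the abundance hypothesis is not needed here.

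\textbf{Lower bound, the main obstacle.} Given $(\mu,Q)$, we need many separated orbit segments whose $L_n$ stays close to $\gamma_{\mu,Q}$, with count about $e^{n h_\mu(Q)}$. For an \emph{ergodic} path measure this follows from a Katok-type entropy formula on $\OmegaT$ combined with the ergodic theorem. With Brin--Katok on the expansive shift, the set of typical points with $L_n$ near $\gamma$ has measure close to $1$, so it contains a separated set of cardinality at least $e^{n(h-\eps)}$. This gives
\[
\underline\Pi\ge h_P+\cE(\gamma_P)-2\eps .
\]
The nonergodic case cannot be handled by ergodic decomposition, because $\cE$ is nonlinear and the components have different two-marginals. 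This is exactly where the hypothesis that $(T,\cE)$ has an abundance of ergodic path measures enters. I expect it states that every stationary pair can be approximated by ergodic path measures $P_k$ with $h_{P_k}\to h_\mu(Q)$ (or $\liminf\ge$) and $(\pi_{0,1})_*P_k\to\gamma_{\mu,Q}$. Continuity of $\cE$ then transfers the ergodic bound to $(\mu,Q)$, which completes the proof. The delicate technical point is making the Katok estimate uniform in the lower limit $\liminf_n$, so that it controls $\underline\Pi$ rather than $\overline\Pi$; I would use the fact that in Brin--Katok the typical-set estimate holds for all large $n$.
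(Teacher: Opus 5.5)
\textbf{The gap is in the Markov replacement at the end of your upper bound.} The conclusion you need is true: there is a stationary pair $(\mu,Q)$ with $\gamma_{\mu,Q}=(p_2)_*P$ and $h_\mu(Q)\geq h_P(\sigma)$. This is \cref{prop:markovization}, which the paper calls the key step. The mechanism you propose, however, is false. The kernel entropy is not the forward conditional entropy of $x_1$ given $x_0$, and that quantity need not dominate $h_P(\sigma)$.

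For a counterexample, take a single-valued $T(x)=\{f(x)\}$ with $f$ the doubling map of the circle and $\mu$ Lebesgue measure. Then $Q_x=\delta_{f(x)}$, so $x_1$ is a function of $x_0$ and the forward conditional entropy is $0$. Yet $h_\mu(Q)=h_\mu(f)=\log 2$.

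The correct argument runs backwards in time, and this is where forward expansivity is actually used.
\begin{enumerate}[label=\textup{(\arabic*)}]
 \item Distinct points of $T^{-1}(x)$ are $c$-separated, so the inverse fibres are uniformly finite.
 \item The first-coordinate partition is a one-sided generator. Hence, in the two-sided stationary extension, $h_P(\sigma)=H(x_{-1}\mid x_0,x_1,\dots)$.
 \item For the Markov measure $\nu_{\mu,Q}$, past and future are conditionally independent given the present. This gives $h_\mu(Q)=h_{\nu_{\mu,Q}}(\sigma)=H(x_{-1}\mid x_0)$, which depends only on $\gamma$.
 \item Write the law of $x_{-1}$ given $x_0$ under $P$ as a mixture of its laws given the whole future. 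Concavity of Shannon entropy then yields $h_P(\sigma)\leq h_\mu(Q)$.
\end{enumerate}
Without this argument, or an explicit appeal to it, your upper bound only reaches $\sup_\nu\{h_\nu(\sigma)+\widehat\cE(\nu)\}$ over path measures, not the supremum over stationary transition pairs.

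\textbf{Other points.}
\begin{itemize}
 \item \emph{Upper bound route.} The rest of your architecture matches the paper except here. The paper uses the Gibbs entropy identity together with \cref{lem:empirical-distribution-entropy}, a Misiurewicz block argument for the distribution $\Xi$ of empirical measures followed by the entropy-decomposition formula. You instead localize with a finite cover of $\Prob(\GammaT)$ and run the classical Misiurewicz argument on each piece. That route is valid and more elementary, provided the balls are convex (for instance $d_{\mathrm{BL}}$-balls), so that the averaged two-marginal stays in $\overline{B_i}$. What the paper's route buys is the $\Xi$-level statement, which is reused for the Gibbs-ensemble limits in \cref{thm:gibbs-ensembles}.
 \item \emph{Abundance.} \cref{def:abundance} is weaker than what you guessed. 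It only supplies an ergodic $\rho$ with $h_\rho(\sigma)+\widehat\cE(\rho)>h_\nu(\sigma)+\widehat\cE(\nu)-\eta$, with no requirement that $\rho$ be close to $\nu$. This is still all you need: apply your ergodic Brin--Katok bound directly to $\rho$, with $\nu=\nu_{\mu,Q}$.
 \item \emph{Comparing separated sets.} Both bounds require a comparison between $d_{n+1}$-separated finite orbit segments and Bowen-separated sets in $\OmegaT$. This involves an index shift $n\mapsto n+r$ and a small change in $L_n$. It is \cref{lem:edge-lift}, and your write-up should state it.
\end{itemize}
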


Li--Li--Zhang's entropy is attached to a stationary transition pair,
whereas the nonlinear formalism of Buzzi--Kloeckner--Leplaideur is formulated
for invariant measures of a single-valued system.  Passing to the path space, the Markov replacement result in
\cref{sec:markovization}  returns the variational problem to stationary
transition pairs without losing entropy. Therefore, affine energies recover the linear
correspondence formalism of \cite{li-li-zhang-correspondences}, and
single-valued correspondences recover the framework of
\cite{buzzi-kloeckner-leplaideur-nonlinear}.

The assumption of the nonlinear ergodic-abundance  introduced in
\cref{def:abundance} cannot simply be omitted for a general continuous
energy; see~\cite{buzzi-kloeckner-leplaideur-nonlinear}. Two useful sources of the abundance hypothesis are  convex energies and entropy-density of ergodic
path measures given in 
\cref{prop:abundance-criteria}, where the latter applies to forward expansive correspondences
with specification property, see \cref{cor:specification-abundance}.

\begin{theoremB}[Equilibrium pairs and limits of Gibbs ensembles]
Let $T$ be a forward expansive correspondence on a compact metric space, let
$\cE\in C(\Prob(\GammaT),\R)$, and assume that $(T,\cE)$ has an abundance of
ergodic path measures.  Then nonlinear equilibrium pairs exist.  There is a
fixed expansive scale $\eps_*>0$ such that the following also holds.  If
$n_k\to\infty$ and $\cC_k\subset\cO_{n_k+1}(T)$ are
$(n_k,\eps_*)$-separated sets whose nonlinear partition sums have exponential
growth rate $\Ptop^{\cE}(T)$, then every accumulation point of the weighted
empirical measures $\Gamma_{\cC_k}$ defined in \cref{sec:gibbs} is an average
of the two-coordinate distributions of equilibrium pairs.  If all equilibrium
pairs have the same two-coordinate distribution $\gamma_*$, then
$\Gamma_{\cC_k}\to\gamma_*$ and their first marginals converge to the first
marginal of $\gamma_*$, in the weak-star topology.
\end{theoremB}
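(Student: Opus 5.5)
We will work on the orbit space $\OmegaT\subset X^{\N}$ with the shift $\sigma$. The two-coordinate projection $\pi(\boldsymbol x)=(x_0,x_1)$ sends shift-invariant path measures $P\in\Inv(\OmegaT,\sigma)$ to $\pi_*P\in\Prob(\GammaT)$. We first lift the energy to $\widetilde\cE(P):=\cE(\pi_*P)$, which is continuous on $\Inv(\OmegaT,\sigma)$. Forward expansiveness of $T$ makes $\sigma$ expansive on $\OmegaT$, so $P\mapsto h_P(\sigma)$ is upper semicontinuous. Hence $P\mapsto h_P(\sigma)+\widetilde\cE(P)$ is upper semicontinuous on the compact convex set $\Inv(\OmegaT,\sigma)$ and attains its supremum at some $P_*$. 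Abundance of ergodic path measures, through the variational principle (Theorem A) as proved on path space, identifies this supremum with $\Ptop^{\cE}(T)$. We then apply the Markov replacement result of \cref{sec:markovization} to $P_*$. It yields a stationary pair $(\mu,Q)$ with $\gamma_{\mu,Q}=\pi_*P_*$ and $h_\mu(Q)\ge h_{P_*}(\sigma)$. Therefore $h_\mu(Q)+\cE(\gamma_{\mu,Q})\ge\Ptop^{\cE}(T)$, and by Theorem A equality holds. So $(\mu,Q)$ is an equilibrium pair. This settles existence.

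For the Gibbs ensembles, fix $\eps_*$ as an expansive constant for $\sigma$, chosen so that $(n,\eps_*)$-separated orbit segments correspond to separated cylinders. Given $\cC_k$, we form the weighted measures on $\OmegaT$
\[
\nu_k=\frac{1}{Z_k}\sum_{\boldsymbol x\in\cC_k}e^{n_k\cE(L_{n_k}(\boldsymbol x))}\,\frac1{n_k}\sum_{j<n_k}\delta_{\sigma^j\bar{\boldsymbol x}},
\]
where $\bar{\boldsymbol x}$ is any extension of $\boldsymbol x$ to an infinite orbit and $Z_k$ is the partition sum. Then $\pi_*\nu_k=\Gamma_{\cC_k}$, at least up to the normalization of \cref{sec:gibbs}. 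Any accumulation point $\nu$ is $\sigma$-invariant. The key claim is that $\nu$-almost every ergodic component $P$ in its ergodic decomposition satisfies $h_P(\sigma)+\widetilde\cE(P)=\Ptop^{\cE}(T)$.

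To prove this claim, we follow the Buzzi--Kloeckner--Leplaideur strategy. We localize the ensemble: cover $\Prob(\GammaT)$ by finitely many small balls $B_i$ and split $\cC_k$ according to which ball contains $L_{n_k}(\boldsymbol x)$. On each piece the energy is nearly constant, equal to $\cE(\gamma_i)\pm\delta$. The Misiurewicz argument applied to the uniformly weighted sub-ensemble then bounds its limiting measures $\nu^{(i)}$ by $\limsup\frac1{n_k}\log\#(\text{piece})\le h_{\nu^{(i)}}(\sigma)$, and $\pi_*\nu^{(i)}$ lies near $\bar B_i$. Pieces whose exponential weight falls below $\Ptop^{\cE}(T)-\eta$ carry vanishing mass in the normalized ensemble, because $Z_k$ grows at rate $\Ptop^{\cE}(T)$ and there are finitely many pieces. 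So the surviving limits satisfy $h_{\nu^{(i)}}+\cE(\pi_*\nu^{(i)})\ge\Ptop^{\cE}(T)-\eta-2\delta$. Letting $\delta,\eta\to0$ along a diagonal and using upper semicontinuity, $\Gamma$ becomes a limit of convex combinations of $\pi_*P$ with $P$ near-maximizers. By compactness and upper semicontinuity these limits are averages of $\pi_*P$ with $P$ maximizers. Markov replacement turns each such $\pi_*P$ into some $\gamma_{\mu,Q}$ of an equilibrium pair. Measurability of the selection $P\mapsto(\mu,Q)$ is handled by working with $\pi_*P$ directly, since only the two-coordinate distributions enter.

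The uniqueness clause follows at once. If every equilibrium pair has $\gamma_{\mu,Q}=\gamma_*$, then every accumulation point equals $\gamma_*$, and compactness gives $\Gamma_{\cC_k}\to\gamma_*$. The first marginals converge because the projection is continuous.

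I expect the main obstacle to be the localized Misiurewicz step. The nonlinear weight is not multiplicative, so the standard argument $h_\nu+\int\phi\ge P$ for the linear Gibbs limit fails. The finite-cover localization that replaces it needs care: each ball must be chosen small enough relative to the modulus of continuity of $\cE$, and the extension and edge effects must be controlled uniformly. These effects come from $\bar{\boldsymbol x}$ versus $\boldsymbol x$ and from the $1/n_k$ boundary terms in $L_{n_k}$.
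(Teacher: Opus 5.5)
The one real error in your proposal is the ``key claim''. Your existence argument and your handling of the uniqueness clause coincide with the paper's (\cref{thm:existence} and \cref{cor:gibbs-unique}). But it is false in general that almost every ergodic component of an accumulation point $\nu$ of the path ensembles maximizes $h+\widehat\cE$. For convex $\cE$ it would follow as in \cref{prop:abundance-criteria}(ii), but not otherwise. Here is a counterexample.
\begin{itemize}
\item Take the complete correspondence on $\{0,1\}$, set $\gamma_0=\tfrac12(\delta_{(0,0)}+\delta_{(1,1)})$, and let $\cE(\gamma)=-2\sqrt{\|\gamma-\gamma_0\|_{\mathrm{TV}}}$. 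Abundance holds by \cref{cor:specification-abundance}.
\item If $t:=\gamma_{\mu,Q}(0,1)=\gamma_{\mu,Q}(1,0)>0$, then $h_\mu(Q)\leq 2t\log(1/t)+2t<2\sqrt{2t}\leq-\cE(\gamma_{\mu,Q})$. If $t=0$, then $h_\mu(Q)=0$. Hence $\Ptop^{\cE}(T)=0$, and $\gamma_0$ is the only equilibrium two-coordinate distribution.
\item By \cref{cor:gibbs-unique}, every accumulation point $\nu$ of your $\nu_k$ has $\pi_*\nu=\gamma_0$. Invariance then forces $\nu=\tfrac12(\delta_{0^\infty}+\delta_{1^\infty})$, whose ergodic components have $h+\widehat\cE=-2$.
\end{itemize}

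Your cell argument never actually proves the key claim, which is why the rest of your proof survives. The maximizers it produces are the limits $\nu^{(i)}$ of the cell sub-ensembles, and these need not be ergodic. A barycentric representation of $\Gamma_\infty$ by such maximizers is exactly what the theorem asserts, so drop the ergodic-decomposition claim and state that instead.

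When you write it out, note that $\Gamma_{\cC_k}$ splits over the cells with the Gibbs-weighted restrictions, not the uniformly weighted sub-ensembles you feed to Misiurewicz. Both project into the same convex $d_{\mathrm{BL}}$-ball of radius $r$, so the discrepancy is at most $2r$ and vanishes in your diagonal limit. The edge effects you anticipate also do not occur. Since $\pi_*\Delta_n(\bar{\boldsymbol x})=L_n(\boldsymbol x)$ exactly, $\pi_*\nu_k=\Gamma_{\cC_k}$ once $Z_k$ is taken to be the actual weight $W^{\cE}_{n_k}(\cC_k)$ rather than the supremum.

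With that correction your route is valid, but it differs from the paper's. The paper does not localize in $\Prob(\GammaT)$.
\begin{itemize}
\item It keeps the whole Gibbs law $\Xi_k=\sum_{\boldsymbol x\in\cC_k}p_{k,\boldsymbol x}\delta_{\Delta_{n_k}(e_k(\boldsymbol x))}$ of the path empirical measure.
\item It uses the exact identity $\frac1{n_k}\log W^{\cE}_{n_k}(\cC_k)=\frac1{n_k}H(p_k)+\int\widehat\cE\,d\Xi_k$. The nonlinear energy enters continuously through $\Xi_k$, so it never needs to be nearly constant on pieces.
\item \cref{lem:empirical-distribution-entropy} bounds $\limsup_k\frac1{n_k}H(p_k)$ by the entropy of the barycenter, via Misiurewicz with arbitrary weights. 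It then rewrites that entropy as $\int h_\rho\,d\Xi(\rho)$ by the entropy-decomposition formula.
\item This gives $\Ptop^{\cE}(T)\leq\int(h_\rho+\widehat\cE(\rho))\,d\Xi(\rho)$, which forces $\Xi$ onto $\mathsf{EM}_{\mathrm{path}}(T,\cE)$ in one step. The representing measure $\tau$ is just the image of $\Xi$, again a decomposition into possibly non-ergodic maximizers.
\end{itemize}
Your cell argument needs only the cardinality form of Misiurewicz and finite convex combinations, so it avoids the integral affinity of entropy. The cost is a second compactness limit as the cells shrink, and a representing measure obtained only as a limit of finitely supported ones rather than canonically as the limit law of the empirical measures.

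Finally, your upper bound works at any separation scale, so by itself it does not show that sequences with growth rate $\Ptop^{\cE}(T)$ exist at your $\eps_*$. The paper chooses $\eps_*$ through \cref{thm:fixed-scale} precisely so that near-maximal separated sets satisfy the hypothesis.
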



For energies of the form
\[
 \cE(\gamma)=F\left(\int_{\GammaT}\boldsymbol\psi\,d\gamma\right),
 \qquad
 \boldsymbol\psi=(\psi_1,\ldots,\psi_d)\in C(\GammaT,\R^d),
\]
with $F$ continuous on a neighborhood of the convex hull of
$\boldsymbol\psi(\GammaT)$, the infinite-dimensional variational problem reduces to a finite-dimensional
one.  We write
$\Ptop^{F,\boldsymbol\psi}(T):=\Ptop^{\cE}(T)$ for this energy and define
\begin{align*}
 \mathcal R(\boldsymbol\psi)
 &:=\left\{\int_{\GammaT}\boldsymbol\psi\,d\gamma_{\mu,Q}:
       Q\in\Ker(X;T),\ \mu\in\Inv(X,Q)\right\},\\
 \mathfrak h(\boldsymbol z)
 &:=\sup\left\{h_\mu(Q):
       \int_{\GammaT}\boldsymbol\psi\,d\gamma_{\mu,Q}
       =\boldsymbol z\right\}.
\end{align*}
In the definition of $\mathfrak h$, the supremum is over stationary
transition pairs with the indicated rotation vector.
Under the hypotheses of Theorem~C, every nonlinear equilibrium pair
is a linear equilibrium pair for a potential
$\boldsymbol y\cdot\boldsymbol\psi$, where
\begin{equation}\label{eq:intro-self-consistency}
  \boldsymbol y=\nabla F(\nabla P(\boldsymbol y)),
  \qquad
  P(\boldsymbol y):=P(T,\boldsymbol y\cdot\boldsymbol\psi).
\end{equation}
A solution yields a nonlinear equilibrium pair only when its rotation vector
$\nabla P(\boldsymbol y)$ globally maximizes $\mathfrak h+F$.
Strict local maxima that are not global maxima describe metastable states in
the finite-state examples below.

\begin{theoremC}[Finite-dimensional reduction]
Let $T$ be a forward expansive correspondence on a compact metric space, let
$\boldsymbol\psi\in C(\GammaT,\R^d)$, and suppose that
$(T,\boldsymbol\psi)$ satisfies the $C^1$ Legendre assumptions in
\cref{def6.2}.  Let $U$ be an open
neighborhood of the convex hull of $\boldsymbol\psi(\GammaT)$, and let
$F\in C^1(U)$.  Assume
abundance for the energy
$\cE(\gamma)=F(\int\boldsymbol\psi\,d\gamma)$ and suppose that the maximizing
set below lies in the interior of the rotation set.  Then
\begin{equation}\label{eq:intro-finite-reduction}
 \Ptop^{F,\boldsymbol\psi}(T)
 =\max_{\boldsymbol z\in\mathcal R(\boldsymbol\psi)}
   \{\mathfrak h(\boldsymbol z)+F(\boldsymbol z)\}.
\end{equation}
If $V$ denotes the maximizing set in
\eqref{eq:intro-finite-reduction} and
$Y=(\nabla P)^{-1}(V)$, the nonlinear equilibrium pairs are exactly
\[
 \bigcup_{\boldsymbol y\in Y}
 \Eq(T,\boldsymbol y\cdot\boldsymbol\psi).
\]
Here $\Eq(T,\varphi)$ denotes the set of linear equilibrium pairs for the two-coordinate potential $\varphi$.
Every $\boldsymbol y\in Y$ satisfies the self-consistency equation
\eqref{eq:intro-self-consistency}.
\end{theoremC}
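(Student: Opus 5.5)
We first reduce \eqref{eq:intro-finite-reduction} to Theorem~A. Since $\cE(\gamma_{\mu,Q})=F(\int\boldsymbol\psi\,d\gamma_{\mu,Q})$, we group the supremum in Theorem~A according to the rotation vector $\boldsymbol z=\int\boldsymbol\psi\,d\gamma_{\mu,Q}\in\mathcal R(\boldsymbol\psi)$. This gives
\[
\Ptop^{F,\boldsymbol\psi}(T)=\sup_{\boldsymbol z\in\mathcal R(\boldsymbol\psi)}\{\mathfrak h(\boldsymbol z)+F(\boldsymbol z)\}.
\]
To see that the supremum is attained, we argue as follows.
\begin{itemize}
\item $\mathcal R(\boldsymbol\psi)$ is compact and convex. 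It is the image of the compact convex set of stationary joint distributions, which is the set of $\gamma\in\Prob(\GammaT)$ with equal marginals; this is the characterization of \cite{li-li-zhang-correspondences}.
\item $\mathfrak h$ is concave and upper semicontinuous. Concavity comes from affinity of the kernel entropy together with convex combination of pairs. Upper semicontinuity comes from upper semicontinuity of $(\mu,Q)\mapsto h_\mu(Q)$ under forward expansiveness, taken through the path-space/Markovization, plus compactness.
\end{itemize}
Existence of nonlinear equilibrium pairs (Theorem~B) gives attainment directly. The equilibrium pairs are exactly the pairs with $\boldsymbol z\in V$ and $h_\mu(Q)=\mathfrak h(\boldsymbol z)$.

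Next comes the Legendre duality step. The $C^1$ Legendre assumptions of \cref{def6.2} will presumably provide three facts. First, $P$ is $C^1$ and strictly convex. Second, $\nabla P$ is a homeomorphism from $\R^d$ onto $\intt\mathcal R(\boldsymbol\psi)$. Third, for $\boldsymbol z=\nabla P(\boldsymbol y)$ we have
\[
\mathfrak h(\boldsymbol z)=P(\boldsymbol y)-\boldsymbol y\cdot\boldsymbol z,
\]
with the pairs achieving $\mathfrak h(\boldsymbol z)$ exactly the linear equilibrium pairs $\Eq(T,\boldsymbol y\cdot\boldsymbol\psi)$. The proof of the last claim runs in two directions.
\begin{itemize}
\item If $(\mu,Q)$ has rotation vector $\boldsymbol z$ and $h_\mu(Q)=\mathfrak h(\boldsymbol z)$, then $h_\mu(Q)+\boldsymbol y\cdot\boldsymbol z=P(\boldsymbol y)$ by the linear variational principle, so $(\mu,Q)$ is a linear equilibrium pair.
\item Conversely, a linear equilibrium pair for $\boldsymbol y\cdot\boldsymbol\psi$ has rotation vector in $\partial P(\boldsymbol y)=\{\nabla P(\boldsymbol y)\}$, by the standard subgradient argument. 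Its entropy then equals $P(\boldsymbol y)-\boldsymbol y\cdot\boldsymbol z\ge\mathfrak h(\boldsymbol z)$, and the reverse inequality holds by Young's inequality.
\end{itemize}
Since $V\subset\intt\mathcal R$ by hypothesis, each $\boldsymbol z\in V$ equals $\nabla P(\boldsymbol y)$ for a unique $\boldsymbol y\in Y$. Combining this with the first paragraph yields the stated union formula.

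Finally, the self-consistency equation. On $\intt\mathcal R$, $\mathfrak h$ is the Legendre transform $-P^*$. Its superdifferential at $\boldsymbol z=\nabla P(\boldsymbol y)$ contains $-\boldsymbol y$; it is exactly $\{-\boldsymbol y\}$, because injectivity of $\nabla P$ (strict convexity) makes $P^*$ differentiable there. So $\mathfrak h$ is differentiable at $\boldsymbol z$ with gradient $-\boldsymbol y$. At an interior maximizer of $\mathfrak h+F$ with $F\in C^1$, the first-order condition gives $-\boldsymbol y+\nabla F(\boldsymbol z)=0$, that is, $\boldsymbol y=\nabla F(\nabla P(\boldsymbol y))$.

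The main obstacle is the identification in the second paragraph. Its key input is the differentiability and bijectivity of $\nabla P$ onto $\intt\mathcal R$, together with showing that linear equilibrium pairs have rotation vector $\nabla P(\boldsymbol y)$. If \cref{def6.2} only assumes differentiability of $P$, bijectivity must be derived from it. I would try to derive it via the duality $P=\mathfrak h^*$ on the closed convex set $\mathcal R$, using upper semicontinuity of $\mathfrak h$ and Fenchel–Moreau.
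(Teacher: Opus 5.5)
Your proposal is correct and follows essentially the paper's route. The max formula comes from grouping the variational principle by rotation vectors, with attainment supplied by the existence theorem. The union formula comes from the equivalence $h_\mu(Q)=\mathfrak h(\boldsymbol z)\iff(\mu,Q)\in\Eq(T,\boldsymbol y\cdot\boldsymbol\psi)$, via the attained Legendre identity and the subgradient argument. The self-consistency equation comes from $\nabla\mathfrak h(\boldsymbol z)=-\boldsymbol y$; the paper obtains this by a direct two-sided difference-quotient squeeze using continuity of $(\nabla P)^{-1}$ instead of quoting the unique-supergradient criterion. One side remark is wrong but harmless: kernel entropy is not affine under mixing of two-coordinate distributions, only concave (via Markov replacement), and in your Young's-inequality step the labels of "Young's inequality" and "definition of $\mathfrak h$" are swapped. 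Neither point matters, since upper semicontinuity of $\mathfrak h$ is assumed and attainment comes from the existence theorem.
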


\begin{theoremD}[Analytic finiteness]
Let $T$ be a forward expansive correspondence on a compact metric space, let
$\psi\in C(\GammaT,\R)$, and suppose that $(T,\psi)$ satisfies the
$C^\omega$ Legendre assumptions in \cref{def6.2} and has unique linear
equilibrium pairs.  If $F$ is
real analytic on a neighborhood of the convex hull of $\psi(\GammaT)$ and the
corresponding energy has abundance, then there are only finitely many nonlinear
equilibrium values and equilibrium pairs, and hence only finitely many
two-coordinate distributions of such pairs.
\end{theoremD}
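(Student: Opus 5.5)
Theorem D follows from the reduction in Theorem C combined with the one-dimensional analytic structure. Since $d=1$, the $C^\omega$ Legendre assumptions in \cref{def6.2} say the following. The linear pressure $P(y)=P(T,y\psi)$ is real analytic on $\R$. Its derivative $P'$ is a strictly increasing analytic diffeomorphism from $\R$ onto the interior $(a,b)$ of the rotation set $\mathcal R(\psi)$. Moreover, $\mathfrak h$ coincides on $(a,b)$ with the Legendre transform
\[
\mathfrak h(z)=P(y)-yz,\qquad z=P'(y).
\]
It follows that $\mathfrak h$ is real analytic and strictly concave on $(a,b)$, with $\mathfrak h'(z)=-(P')^{-1}(z)$. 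Because $F$ is analytic on a neighbourhood of $[a,b]\supseteq\operatorname{conv}\psi(\GammaT)$, the function $G:=\mathfrak h+F$ is real analytic on $(a,b)$.

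The plan is to show that the maximizing set $V$ of $G$ is finite. By Theorem C the nonlinear equilibrium pairs are then $\bigcup_{y\in Y}\Eq(T,y\psi)$ with $Y=(P')^{-1}(V)$ finite. Uniqueness of linear equilibrium pairs gives one pair per $y$, so there are finitely many pairs and hence finitely many two-coordinate distributions $\gamma_{\mu,Q}$. The equilibrium values $\int\psi\,d\gamma$ are exactly the points of $V$, so these are finite as well.

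The finiteness of $V$ has two sources.

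\emph{First, $V$ is compact.} Upper semicontinuity of $\mathfrak h$ on the compact rotation set and continuity of $F$ make $V$ closed in $[a,b]$. By the standing hypothesis $V\subset(a,b)$, so $V$ is a compact subset of the open interval.

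\emph{Second, $G$ is not constant.} If $G$ were constant on $(a,b)$, then $F=c-\mathfrak h$ there, and the self-consistency equation $y=F'(P'(y))$ would hold for every $y$. I would rule this out by a boundary argument. As $z\to b^-$ we have $\mathfrak h'(z)=-(P')^{-1}(z)\to-\infty$, because $P'$ maps $\R$ onto $(a,b)$. On the other hand $F'$ is bounded near $b$, since $F$ is analytic on a neighbourhood of $b$. Hence $G'(z)\to-\infty$ as $z\to b^-$, so $G$ is not constant.

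A nonconstant analytic function on an interval has isolated critical points. Every point of $V$ is an interior maximum and so a critical point of $G$. Thus $V$ is a compact set of isolated points in $(a,b)$, hence finite.

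The main obstacle is this nonconstancy step, and in particular extracting from \cref{def6.2} the exact boundary behaviour of $P'$. The argument needs $(P')^{-1}(z)\to\pm\infty$ at the endpoints of the rotation interval. If \cref{def6.2} only grants analyticity and strict convexity of $P$ on all of $\R$, this comes from surjectivity of $P'$ onto the open rotation interval. I would check that the definition gives this.

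If the rotation set degenerates to a point, the theorem is trivial: $V$ is that single point and $Y$ reduces to the single parameter given by the Legendre assumptions. I would dispose of this case first.
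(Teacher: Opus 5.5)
Your strategy is the paper's: every point of $V$ is a zero of the real-analytic function $G'=F'-(P')^{-1}$ on $(a,b)$, $G$ is nonconstant, isolated zeros plus compactness give $V$ finite, and \cref{thm:legendre} with linear uniqueness finishes. However, there is a genuine gap where you write ``by the standing hypothesis $V\subset(a,b)$''. Theorem D has no such hypothesis. That assumption belongs to Theorem C, and Theorem D must be proved without it.

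You therefore have to show that neither endpoint of $\mathcal R(\psi)=[a,b]$ maximizes $G=\mathfrak h+F$. This matters in two places. First, \cref{thm:legendre} cannot be invoked otherwise. Second, an equilibrium value at an endpoint corresponds to no $y\in\R$, so linear uniqueness gives no control over the pairs with that rotation value.

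The missing argument has two parts.

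\textbf{Continuity of $\mathfrak h$ on the closed interval.} The Legendre identity in \cref{def6.2}(iv) holds on the whole rotation set. So $\mathfrak h$ is an infimum of affine functions of $z$, hence concave on $[a,b]$. Concavity gives $\mathfrak h(a)\le\liminf_{z\downarrow a}\mathfrak h(z)$, and similarly at $b$. Upper semicontinuity gives the reverse inequalities. This step cannot be skipped: upper semicontinuity alone would allow $G(a)>\lim_{z\downarrow a}G(z)$, making $a$ a maximizer even though $G$ increases near $a$.

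\textbf{Monotonicity near the endpoints.} You already derived $\mathfrak h'(z)=-(P')^{-1}(z)$, which tends to $+\infty$ as $z\downarrow a$ and to $-\infty$ as $z\uparrow b$. Together with boundedness of $F'$ near $[a,b]$, this makes $G$ strictly increasing on some $(a,a+\delta)$ and strictly decreasing on some $(b-\delta,b)$. Combined with endpoint continuity, it gives $G(z)>G(a)$ and $G(z)>G(b)$ for nearby interior $z$. Hence $V$ is a compact subset of $(a,b)$, and the rest of your argument goes through.

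Your nonconstancy argument, that $G'\to-\infty$ is incompatible with $G'\equiv0$, is a valid and slightly more direct substitute for the paper's, which deduces nonconstancy from endpoint exclusion. Finally, the degenerate case you plan to dispose of does not arise: \cref{def6.2}(i) requires $\mathcal R(\psi)$ to have nonempty interior.
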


Theorems~C and D are proved in
\cref{thm:legendre,thm:analytic-finiteness}.  The subsequent results apply these theorems to parameter-dependent energies
and finite-state correspondences.

For scalar quadratic energies, we also give a general continuous-transition
criterion.  When the response ratio $y/P'(y)$ is monotone and
$P'(y)=ay-by^{2k+1}+O(y^{2k+3})$, the critical inverse temperature is $1/a$,
the order parameter has exponent $1/(2k)$, and the singular part of the
nonlinear pressure has order $1+1/k$; see
\cref{thm:taylor-critical-exponent}.  The exponent $1/(2k)$ is stable as long as the first $k-1$
nonlinear odd coefficients remain zero and the coefficient of $y^{2k+1}$
remains negative.  A perturbation preserving the hypotheses of that theorem and creating an
earlier negative coefficient changes the exponent to the corresponding lower-order
value; see \cref{prop:response-monotonicity-stability,cor:taylor-stratification}.

The finite-state theory is illustrated by a completely explicit two-state
model.  Its linear pressure is $P(y)=\log(2\cosh y)$ and its mean-field
self-consistency equation is $m=\tanh(\beta m)$.  We prove, by comparing the
global variational values, that the model has one equilibrium pair for
$0\leq\beta\leq1$ and exactly two symmetry-related equilibrium pairs for
$\beta>1$; see \cref{prop:two-state-transition}.  For $\beta>1$, adding an
external field $h$ selects one of these phases.  The selected two-coordinate distribution jumps as
$h$ crosses zero, while the disfavored phase persists locally up to an explicit
spinodal threshold; see \cref{prop:field-transition}.

A multidimensional example is obtained from the three-state complete
correspondence with two state-occupation observables.  Its rotation set is a
two-dimensional simplex.  The associated mean-field Potts energy has a
temperature-driven first-order transition at $\beta_c=4\log2$, where one
disordered and three ordered two-coordinate distributions of equilibrium pairs coexist; see
\cref{prop:three-state-potts}.

The distinction between one-coordinate marginals and two-coordinate
distributions is visible already on a
two-state complete correspondence when the observable is
$\psi(s,t)=st$.  For inverse temperature $\beta>1$, the two equilibrium pairs
have the same uniform stationary measure but opposite correlations between successive states.
Thus the stationary measure alone does not distinguish the two equilibrium
pairs, whereas their two-coordinate distributions and transition kernels do;
see \cref{prop:edge-correlation}.

A constraint on the admissible transitions can also change the critical exponent.  For a
three-state correspondence with a primitive adjacency matrix and one forbidden
transition, the same two-coordinate observable has an explicit pressure
different from $\log(2\cosh y)$.  The
resulting nonlinear transition occurs at $\beta_c=\sqrt3$ and its order
parameter vanishes with exponent $1/4$; see
\cref{prop:constrained-correlation}.

The paper is organized as follows.  In \cref{sec:setting} we define nonlinear
pressure and stationary transition pairs.  In \cref{sec:markovization} we pass
to the orbit space and prove the Markov replacement statement.  The variational
principle and existence theorem are proved in \cref{sec:vp}.  Nonlinear Gibbs
ensembles are studied in \cref{sec:gibbs}.  Energies depending
on finitely many two-coordinate potentials and Theorems~C--D are treated in
\cref{sec:legendre}.  Finite-state examples and extensions are collected in
\cref{sec:finite}.

\section{Correspondences and nonlinear pressure}\label{sec:setting}

Throughout the paper, $(X,d)$ is a compact metric space, $\N=\{1,2,\ldots\}$,
and logarithms are natural. Write $\cF(X)$ for
the collection of nonempty closed subsets of $X$.

\begin{definition}
A \emph{correspondence} is a map $T:X\to\cF(X)$ for which the set
\[
  \GammaT=\{(x,y)\in X^2:y\in T(x)\}
\]
of admissible pairs is closed in $X^2$.  For $n\geq 1$, set
\[
  \cO_{n+1}(T)
  :=\{(x_0,\ldots,x_n)\in X^{n+1}:x_{j+1}\in T(x_j),\ 0\leq j<n\}.
\]
The one-sided orbit space and its shift are
\[
 \OmegaT:=\{(x_j)_{j\geq0}:x_{j+1}\in T(x_j)\},
 \qquad
 \sigma((x_j)_{j\geq0})=(x_{j+1})_{j\geq0}.
\]
\end{definition}

In particular, $\GammaT=\cO_2(T)$, with the notation of
\cite[Section~2]{li-li-zhang-correspondences}.  Each element $(x,y)\in\GammaT$
is a two-coordinate orbit: the state $y$ is an admissible successor of $x$.
A \emph{two-coordinate potential} (or observable) means a continuous function
on $\GammaT$; it need not depend on the first coordinate alone.
We use the coordinate projections
$\proj_1(x,y)=x$ and $\proj_2(x,y)=y$ on $X^2$ and on $\GammaT$.

The spaces $\cO_{n+1}(T)$ and $\OmegaT$ are compact.  On finite orbit spaces we
use
\[
 d_{n+1}(\boldsymbol x,\boldsymbol y)
 :=\max_{0\leq j\leq n}d(x_j,y_j).
\]
On $\OmegaT$ we fix the compatible metric
\begin{equation*}
 d_\omega(\boldsymbol x,\boldsymbol y)
 :=\sum_{j=0}^{\infty}2^{-j-1}
    \frac{d(x_j,y_j)}{1+d(x_j,y_j)}.
\end{equation*}
On $\GammaT$ we use the product metric
\[
 d_\Gamma((x_0,x_1),(y_0,y_1))
 :=\max\{d(x_0,y_0),d(x_1,y_1)\}.
\]
For a function $f:Y\to\R$ on a compact metric space $(Y,d_Y)$, write
$\|f\|_\infty:=\sup_{y\in Y}|f(y)|$ and let $\operatorname{Lip}(f)$ be the
least $L\geq0$ such that $|f(u)-f(v)|\leq Ld_Y(u,v)$ for all $u,v\in Y$,
with value $+\infty$ if no such $L$ exists.
The bounded-Lipschitz metric on
$\Prob(Y)$ is
\begin{equation*}
 d_{\mathrm{BL}}(\lambda,\lambda')
 :=\sup_{\substack{\|f\|_\infty\leq1\\ \operatorname{Lip}(f)\leq1}}
 \left|\int_Y f\,d\lambda-\int_Y f\,d\lambda'\right|.
\end{equation*}
The underlying metric space will always be clear from context.  This metric
induces the weak-star topology on $\Prob(Y)$.

For a Borel probability measure $\lambda$ on $Y$, its support is
\[
 \supp\lambda
 :=\{y\in Y:\lambda(U)>0\text{ for every open neighborhood }U\text{ of }y\}.
\]
For $\eta\in\Prob(\Prob(Y))$, the \emph{barycenter}
$\bar\lambda=\int\lambda\,d\eta(\lambda)\in\Prob(Y)$ is characterized by
\[
 \int_Y f\,d\bar\lambda
 =\int_{\Prob(Y)}\left(\int_Y f\,d\lambda\right)d\eta(\lambda)
 \qquad(f\in C(Y)).
\]
Thus a barycenter is an average of probability measures; if $\eta$ has finite
support, it is their convex combination.

For a finite probability vector $p=(p_i)_{i\in I}$, write
\[
 H(p):=-\sum_{i\in I}p_i\log p_i,
 \qquad 0\log0:=0.
\]
Let $\mathcal P_Y$ be the family of finite Borel partitions of $Y$, and set
$\operatorname{diam}\mathcal A:=\max_{A\in\mathcal A}
\sup_{u,v\in A}d_Y(u,v)$ for $\mathcal A\in\mathcal P_Y$.
For $\mathcal A\in\mathcal P_Y$, set
\[
 H_\lambda(\mathcal A)
 :=-\sum_{A\in\mathcal A}\lambda(A)\log\lambda(A).
\]
For partitions $\mathcal A$ and $\mathcal B$, their join
$\mathcal A\vee\mathcal B$ consists of the nonempty intersections
$A\cap B$, with $A\in\mathcal A$ and $B\in\mathcal B$.
For $\lambda\in\Prob(Y)$ and a sub-$\sigma$-algebra $\mathscr C$ of the
Borel $\sigma$-algebra of $Y$, let
$p_A:=\mathbb E_\lambda(\mathbf1_A\mid\mathscr C)$ denote the conditional
probability of $A\in\mathcal A$ given $\mathscr C$.  Define
\[
 H_\lambda(\mathcal A\mid\mathscr C)
 :=-\int_Y\sum_{A\in\mathcal A}p_A\log p_A\,d\lambda.
\]
Equivalently,
\[
 H_\lambda(\mathcal A\mid\mathscr C)
 =\inf\bigl\{H_\lambda(\mathcal A\mid\mathcal B):
     \mathcal B\in\mathcal P_Y,\ 
     \sigma(\mathcal B)\subseteq\mathscr C\bigr\},
\]
where $\sigma(\mathcal B)$ is the $\sigma$-algebra generated by the atoms of
$\mathcal B$, and conditioning on a finite partition means
\[
 H_\lambda(\mathcal A\mid\mathcal B)
 :=H_\lambda(\mathcal A\mid\sigma(\mathcal B))
 =H_\lambda(\mathcal A\vee\mathcal B)-H_\lambda(\mathcal B).
\]
The equivalence with the infimum formula follows from the martingale
convergence theorem for conditional expectations; see
\cite{petersen-ergodic-theory}.

If $S:Y\to Y$ is continuous and $\lambda\in\Inv(Y,S)$, the entropy rate of
$\mathcal A$ is
\[
 h_\lambda(S,\mathcal A)
 :=\lim_{n\to\infty}\frac1n
 H_\lambda\left(\bigvee_{j=0}^{n-1}S^{-j}\mathcal A\right).
\]
The limit exists by subadditivity.  The Kolmogorov--Sinai entropy is
\[
 h_\lambda(S):=\sup_{\mathcal A\in\mathcal P_Y}h_\lambda(S,\mathcal A).
\]
A partition is a \emph{one-sided generator} if its inverse images
$S^{-j}\mathcal A$, $j\geq0$, generate the Borel sigma-algebra, modulo
$\lambda$-null sets when working with a fixed measure $\lambda$.
For a finite signed Borel measure $\xi$ on $Y$, we use
the total-variation norm
\[
 \|\xi\|_{\mathrm{TV}}
 :=\sup_{\mathcal A\in\mathcal P_Y}
       \sum_{A\in\mathcal A}|\xi(A)|.
\]
A measurable space is \emph{standard Borel} if it admits a bijection onto
a Borel subset of a complete separable metric space such that the bijection
and its inverse are measurable. Compact metric spaces with their Borel
$\sigma$-algebras are standard Borel.

\begin{definition}
The correspondence $T$ is \emph{forward expansive} if there exists
$\eps_0>0$ such that, for every two distinct paths
$\boldsymbol x,\boldsymbol y\in\OmegaT$, one has
$d(x_j,y_j)>\eps_0$ for some $j\geq0$.
\end{definition}

A continuous map $S:Y\to Y$ of a compact metric space $(Y,d_Y)$ is
\emph{positively expansive} if there is $c>0$ such that, for every
$u\ne v$, some $j\geq0$ satisfies $d_Y(S^ju,S^jv)>c$.
Such a $c$ is called an expansive constant.
Forward expansivity of $T$ implies positive expansivity of the shift on
$\OmegaT$.  Indeed, if $\eps_0$ is an expansive constant for $T$ and
$\boldsymbol x\neq\boldsymbol y$, choose $j\geq0$ with
$d(x_j,y_j)>\eps_0$.  Then
\[
 d_\omega(\sigma^j\boldsymbol x,\sigma^j\boldsymbol y)
 >\frac{\eps_0}{2(1+\eps_0)}.
\]
Thus $\eps_0/(2(1+\eps_0))$ is an expansive constant for the shift
\cite[Proposition~6.3]{li-li-zhang-correspondences}.  In particular, its
entropy map is upper semicontinuous; the finite-generator proof needed below
is included in \cref{lem:entropy-usc-positive-expansive}.

Fix an energy $\cE\in C(\Prob(\GammaT),\R)$.  For
$\boldsymbol x\in\cO_{n+1}(T)$, define its \emph{empirical measure of
successive pairs} by
\begin{equation*}
  L_n(\boldsymbol x)
  :=\frac1n\sum_{j=0}^{n-1}\delta_{(x_j,x_{j+1})}.
\end{equation*}
For Borel sets $A,B\subseteq X$, this means
\[
 L_n(\boldsymbol x)(A\times B)
 =\frac1n\#\{0\leq j<n:x_j\in A,\ x_{j+1}\in B\}.
\]
Thus $L_n(\boldsymbol x)$ records the relative frequencies of successive
pairs, not merely the frequencies of individual states.  Its marginals satisfy
\[
 (\proj_1)_*L_n=\frac1n\sum_{j=0}^{n-1}\delta_{x_j},
 \qquad
 (\proj_2)_*L_n=\frac1n\sum_{j=1}^{n}\delta_{x_j},
\]
and hence
\[
 (\proj_1)_*L_n-(\proj_2)_*L_n
 =\frac{\delta_{x_0}-\delta_{x_n}}n.
\]
In particular, a finite empirical measure need not have equal marginals.
In \cref{sec:markovization} we identify $L_n$ as the two-coordinate projection
of the usual empirical measure on the orbit space.

\begin{definition}\label{def:nonlinear-pressure}
A subset $\cC\subset\cO_{n+1}(T)$ is \emph{$(n,\eps)$-separated} if
$d_{n+1}(\boldsymbol x,\boldsymbol y)>\eps$ whenever
$\boldsymbol x\ne\boldsymbol y$ belong to $\cC$.
For $\eps>0$ and $n\geq1$, let
\begin{equation*}
 Z_n(T,\cE,\eps)
 :=\sup_{\cC}
   \sum_{\boldsymbol x\in\cC}
   \exp\bigl(n\cE(L_n(\boldsymbol x))\bigr),
\end{equation*}
where the supremum ranges over all $(n,\eps)$-separated subsets
$\cC\subset\cO_{n+1}(T)$.  The upper and lower nonlinear topological
pressures are
\begin{align*}
 \overline\Pi_{\mathrm{top}}^{\cE}(T)
  &:=\lim_{\eps\to0^+}\limsup_{n\to\infty}
     \frac1n\log Z_n(T,\cE,\eps),\\
 \underline\Pi_{\mathrm{top}}^{\cE}(T)
  &:=\lim_{\eps\to0^+}\liminf_{n\to\infty}
     \frac1n\log Z_n(T,\cE,\eps).
\end{align*}
The outer limits are taken through positive $\eps$; they exist by
monotonicity in the separation scale. When the two pressures agree, their
common value is denoted by $\Ptop^{\cE}(T)$.
\end{definition}

\begin{remark}
The Carath\'eodory formulation of pressure on noncompact or selected orbit
sets, which underlies several later variants of nonlinear pressure, goes back
to Pesin and Pitskel'~\cite{pesin-pitskel-pressure-noncompact}.
If $\cE(\gamma)=\int_{\GammaT}\varphi\,d\gamma$ for a continuous two-coordinate potential $\varphi$, then $n\cE(L_n(\boldsymbol x))
 =\sum_{j=0}^{n-1}\varphi(x_j,x_{j+1}).$ Thus \cref{def:nonlinear-pressure} reduces to the linear topological pressure
of~\cite{li-li-zhang-correspondences}, which we denote by $P(T,\varphi)$.
\end{remark}

We next introduce the measure-theoretic objects. A Borel transition
probability kernel $Q$ on $X$ assigns a probability measure $Q_x$ to each
$x\in X$ so that $x\mapsto Q_x(A)$ is Borel for every Borel set $A$.
Let $\Ker(X;T)$ consist of these kernels satisfying
$\supp Q_x\subseteq T(x)$ for every $x\in X$. Write
\[
 \Inv(X,Q):=\{\mu\in\Prob(X):\mu Q=\mu\},
 \qquad
 (\mu Q)(A):=\int_X Q_x(A)\,d\mu(x).
\]
A pair $(\mu,Q)$ with $Q\in\Ker(X;T)$ and $\mu\in\Inv(X,Q)$ is called a
\emph{stationary transition pair}.  Its \emph{two-coordinate distribution}
is the Borel probability measure on $X^2$ given by
\begin{equation*}
 \gamma_{\mu,Q}(A)
 :=\int_X\int_X\mathbf1_A(x,y)\,dQ_x(y)\,d\mu(x),
 \qquad A\subseteq X^2\text{ Borel}.
\end{equation*}
Equivalently, for Borel sets $A,B\subseteq X$,
\[
 \gamma_{\mu,Q}(A\times B)=\int_A Q_x(B)\,d\mu(x).
\]
This is the joint distribution of two successive states of the stationary
Markov process with initial distribution $\mu$ and transition kernel $Q$.
In the notation of \cite[Definition~5.8]{li-li-zhang-correspondences}, it is
exactly $\mu Q^{[1]}$.  In contrast, $\mu Q$ is a probability measure on $X$.
More precisely,
\[
 (\proj_1)_*\gamma_{\mu,Q}=\mu,
 \qquad (\proj_2)_*\gamma_{\mu,Q}=\mu Q=\mu,
 \qquad \gamma_{\mu,Q}(\GammaT)=1.
\]
We identify $\gamma_{\mu,Q}$ with its restriction to $\GammaT$.  For every
$\varphi\in C(\GammaT,\R)$,
\[
 \int_{\GammaT}\varphi\,d\gamma_{\mu,Q}
 =\int_X\int_{T(x)}\varphi(x,y)\,dQ_x(y)\,d\mu(x).
\]
Thus integrating a potential against $\gamma_{\mu,Q}$ gives precisely the
potential term in the variational principle of
\cite[Theorem~A]{li-li-zhang-correspondences}.

\begin{definition}
The nonlinear measure pressure of a stationary transition pair is
\begin{equation}\label{eq:measure-pressure}
 \Pmeas_{\cE}(T;\mu,Q)
 :=h_\mu(Q)+\cE(\gamma_{\mu,Q}),
\end{equation}
where the kernel entropy is defined as follows.  For a finite Borel partition
$\mathcal A$ of $X$, let $\mathcal A^{\times n}$ be its $n$-fold product partition and
let $\mu Q^{[n-1]}$ be the probability measure on $X^n$ given, for $n\geq2$,
on rectangles by
\begin{align*}
 &\mu Q^{[n-1]}(A_0\times\cdots\times A_{n-1})\\
 &\quad:=\int_{A_0}\int_{A_1}\cdots\int_{A_{n-1}}
 dQ_{x_{n-2}}(x_{n-1})\cdots dQ_{x_0}(x_1)\,d\mu(x_0).
\end{align*}
For $n=1$ set $\mu Q^{[0]}:=\mu$.
Set
\begin{equation*}
 h_\mu(Q,\mathcal A)
 :=\lim_{n\to\infty}\frac1n
   H_{\mu Q^{[n-1]}}(\mathcal A^{\times n}),
 \qquad
 h_\mu(Q):=\sup_{\mathcal A\in\mathcal P_X}h_\mu(Q,\mathcal A).
\end{equation*}
The limit exists by the usual subadditivity argument for the stationary path
process.  This is the kernel entropy introduced in
\cite[Definitions~5.12--5.13]{li-li-zhang-correspondences}.  A stationary transition pair maximizing \eqref{eq:measure-pressure} over
$Q\in\Ker(X;T)$ and $\mu\in\Inv(X,Q)$ is called a
\emph{nonlinear equilibrium pair}.
\end{definition}

Kernels that agree $\mu$-almost everywhere determine the same two-coordinate distribution, path
measure, entropy, and pressure.  Equilibrium kernels will always be understood
modulo this equivalence.

\section{Passage to the orbit space and Markov replacement}\label{sec:markovization}

Let $p_2:\OmegaT\to\GammaT$ be the projection
$p_2((x_j)_{j\geq0})=(x_0,x_1)$.  Lift the energy to
$\Prob(\OmegaT)$ by $\widehat{\cE}(\nu):=\cE((p_2)_*\nu)$.

The lifted energy is continuous. For $\boldsymbol x\in\OmegaT$, define
the empirical measure on the path space by
\[
 \Delta_n(\boldsymbol x):=\frac1n\sum_{j=0}^{n-1}\delta_{\sigma^j\boldsymbol x}.
\]
This is the construction of
\cite[Section~1.2]{buzzi-kloeckner-leplaideur-nonlinear} for
$(\OmegaT,\sigma)$, and its two-coordinate projection is
\begin{equation}\label{eq:empirical-compatibility}
 (p_2)_*\Delta_n(\boldsymbol x)=L_n(x_0,\ldots,x_n).
\end{equation}

For $n\geq1$, write
\[
 d_{\omega,n}(\boldsymbol x,\boldsymbol y)
 :=\max_{0\leq k<n}d_\omega(\sigma^k\boldsymbol x,
                              \sigma^k\boldsymbol y)
\]
and let
\begin{equation*}
 \widehat Z_n(\widehat\cE,\delta)
 :=\sup_{\widehat\cC}
   \sum_{\boldsymbol x\in\widehat\cC}
   \exp\bigl(n\widehat\cE(\Delta_n(\boldsymbol x))\bigr),
\end{equation*}
where $\widehat\cC$ ranges over the $(n,\delta)$-separated subsets of
$(\OmegaT,d_{\omega,n})$.
The upper and lower nonlinear pressures of $(\OmegaT,\sigma,\widehat\cE)$
are defined by the same two limits as in \cref{def:nonlinear-pressure},
with $\widehat Z_n(\widehat\cE,\delta)$ in place of $Z_n(T,\cE,\eps)$.

Using the bounded-Lipschitz metric on $\Prob(\GammaT)$, define the modulus
of continuity
\[
 \omega_{\cE}(t):=\sup\bigl\{|\cE(\gamma)-\cE(\gamma')|:
 d_{\mathrm{BL}}(\gamma,\gamma')\leq t\bigr\},\qquad t\geq0,
\]
where $\gamma,\gamma'\in\Prob(\GammaT)$.
Compactness gives $\lim_{t\to0^+}\omega_{\cE}(t)=0$.

\begin{lemma}\label{lem:edge-lift}
Let $T$ be a correspondence on a compact metric space and let
$\cE\in C(\Prob(\GammaT),\R)$.
Let $M_{\cE}:=\|\cE\|_\infty$.
\begin{enumerate}[label=\textup{(\roman*)}]
 \item For every $\eps>0$ and $n\geq1$,
 \begin{equation}\label{eq:edge-to-path-estimate}
  Z_n(T,\cE,\eps)
  \leq
  \widehat Z_n\left(\widehat\cE,
       \frac{\eps}{4(1+\eps)}\right).
 \end{equation}
 \item Fix $0<\delta<1$ and choose an integer $r=r(\delta)\geq1$ so that
 $2^{-r-1}<\delta/2$.  Set $m=n+r$ and $a=\delta/4$.  Then
 \begin{equation}\label{eq:path-to-edge-estimate}
  \widehat Z_n(\widehat\cE,\delta)
  \leq
  \exp\left(n\omega_{\cE}\left(\frac{2r}{n+r}\right)
               +rM_{\cE}\right)
  Z_{n+r}(T,\cE,a).
 \end{equation}
\end{enumerate}
Consequently, the upper and lower nonlinear pressures in
\cref{def:nonlinear-pressure} coincide, respectively, with the upper and lower
nonlinear pressures of $(\OmegaT,\sigma,\widehat\cE)$.
\end{lemma}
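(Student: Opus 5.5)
Both inequalities will be proved by transferring separated sets between the finite orbit spaces $\cO_{n+1}(T)$ and the path space $\OmegaT$. The weights are compared through the compatibility identity \eqref{eq:empirical-compatibility}.

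For (i), take an $(n,\eps)$-separated $\cC\subset\cO_{n+1}(T)$. Since every point has a nonempty image, each segment $(x_0,\ldots,x_n)$ extends to a path $\boldsymbol x\in\OmegaT$: choose $x_{n+1}\in T(x_n)$, and so on. Write $\widehat\cC$ for the set of chosen extensions. By \eqref{eq:empirical-compatibility}, $\widehat\cE(\Delta_n(\boldsymbol x))=\cE(L_n(x_0,\ldots,x_n))$, so the weights agree exactly. For separation, take two distinct segments. Some index $0\le j\le n$ has $d(x_j,y_j)>\eps$. If $j\le n-1$, then using $\sigma^j$ the zeroth term of $d_\omega$ already gives
\[
d_{\omega,n}\ge \tfrac12\cdot\tfrac{\eps}{1+\eps}.
\]
If $j=n$, use $\sigma^{n-1}$; the coordinate appears with weight $2^{-2}$, giving $\eps/(4(1+\eps))$. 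Here I use that $t\mapsto t/(1+t)$ is increasing. Hence $\widehat\cC$ is $(n,\eps/(4(1+\eps)))$-separated, its cardinality equals that of $\cC$, and taking suprema gives \eqref{eq:edge-to-path-estimate}. Strictness of the inequality needs only a small care, or a slightly smaller constant, since the constant is fixed by the statement.

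For (ii), take an $(n,\delta)$-separated $\widehat\cC\subset\OmegaT$ and map each $\boldsymbol x$ to its truncation $(x_0,\ldots,x_{m})$ with $m=n+r$. First I show that truncations are $(m,a)$-separated in $d_{m+1}$. Suppose $d(x_i,y_i)\le a=\delta/4$ for all $i\le m$. Then for each $0\le k<n$,
\[
d_\omega(\sigma^k\boldsymbol x,\sigma^k\boldsymbol y)\le \sum_{j\le r}2^{-j-1}a+\sum_{j>r}2^{-j-1}\le a+2^{-r-1}<\delta/4+\delta/2<\delta.
\]
This contradicts separation, so truncation is injective and the image is $(m,a)$-separated. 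Next I compare weights. The quantity $L_{m}(x_0,\ldots,x_m)$ differs from $L_n=(p_2)_*\Delta_n(\boldsymbol x)$ by reweighting:
\[
L_m=\tfrac{n}{m}L_n+\tfrac{r}{m}\nu
\]
for some probability $\nu$. Hence $d_{\mathrm{BL}}(L_m,L_n)\le \tfrac{r}{m}\cdot 2\le 2r/(n+r)$, because $\|f\|_\infty\le1$. It follows that
\[
n\cE(L_n)\le n\cE(L_m)+n\omega_\cE(2r/(n+r))\le m\cE(L_m)+rM_\cE+n\omega_\cE(\cdot),
\]
where the last step uses $n\cE(L_m)=m\cE(L_m)-r\cE(L_m)$. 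Summing and taking suprema gives \eqref{eq:path-to-edge-estimate}.

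For the consequence, (i) gives $\limsup\frac1n\log Z_n(\eps)\le\limsup\frac1n\log\widehat Z_n(\eps/(4(1+\eps)))$. Letting $\eps\to0$, the path-space scale also tends to $0$, so the edge pressure is at most the path pressure, and similarly for $\liminf$. From (ii), $\frac1n\log\widehat Z_n(\delta)\le \omega_\cE(2r/(n+r))+rM_\cE/n+\frac{n+r}{n}\cdot\frac{1}{n+r}\log Z_{n+r}(a)$. Since $r$ is fixed and $\omega_\cE(t)\to0$, the error terms vanish as $n\to\infty$, and the $\limsup$ and $\liminf$ over $n$ match those over $n+r$. Letting $\delta\to0$ sends $a\to0$, which gives the reverse inequalities. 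The main obstacle is bookkeeping rather than substance: one must verify the separation constants, especially the last coordinate $j=n$ in (i) and the tail estimate in (ii), and check that the $\liminf$ passes correctly under the index shift $n\mapsto n+r$.
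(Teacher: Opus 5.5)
Your proposal is correct and follows essentially the same route as the paper: for (i) you extend segments to paths and handle the case $j=n$ via $\sigma^{n-1}$, and for (ii) you truncate to prefixes of length $n+r+1$, using the tail bound $a+2^{-r-1}<\delta$ and the estimate $d_{\mathrm{BL}}(L_n,L_{n+r})\leq 2r/(n+r)$. The strictness you flag in (i) is automatic, because $d(x_j,y_j)>\eps$ and $t\mapsto t/(1+t)$ is strictly increasing, so the constant needs no adjustment.
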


\begin{proof}
For part~\textup{(i)}, we use the finite-orbit extension argument of
\cite[Theorem~4.9, Step~1]{li-li-zhang-correspondences}.
For part~\textup{(ii)}, we compare prefixes of different lengths and estimate
the change in their empirical measures.

Every finite admissible orbit extends to an element of $\OmegaT$, since the
values of $T$ are nonempty.  Let $\cC\subset\cO_{n+1}(T)$ be
$(n,\eps)$-separated and choose one extension of every element.  For two
distinct extensions there is $0\leq j\leq n$ such that
$d(x_j,y_j)>\eps$.  If $j<n$, then
\[
 d_\omega(\sigma^j\boldsymbol x,\sigma^j\boldsymbol y)
 >\frac{\eps}{2(1+\eps)}.
\]
If $j=n$, the same coordinate is visible with coefficient $1/4$ from time
$n-1$, giving the lower bound $\eps/(4(1+\eps))$.  The extensions therefore
form an $(n,\eps/(4(1+\eps)))$-separated set.  Their weights agree exactly, which proves
\eqref{eq:edge-to-path-estimate}.

Conversely, let $\widehat\cC\subset\OmegaT$ be
$(n,\delta)$-separated.  If
\[
 d(x_{k+j},y_{k+j})\leq a=\delta/4
 \quad\text{for }0\leq j\leq r,
\]
then
\[
 d_\omega(\sigma^k\boldsymbol x,\sigma^k\boldsymbol y)
 \leq \frac{\delta}{4}+2^{-r-1}<\frac{3\delta}{4}.
\]
Hence Bowen separation supplies a coordinate among the first $n+r$
coordinates at which the distance is larger than $a$.  The prefixes of length
$n+r+1$ are therefore distinct and $(n+r,a)$-separated.

Let $m=n+r$.  For every $\boldsymbol x\in\widehat\cC$,
\[
 d_{\mathrm{BL}}\bigl(L_n(x_0,\ldots,x_n),
                       L_m(x_0,\ldots,x_m)\bigr)
 \leq\frac{2r}{n+r}.
\]
It follows that
\begin{align*}
 n\cE(L_n)
 &\leq n\cE(L_m)
       +n\omega_{\cE}\left(\frac{2r}{n+r}\right)\\
 &\leq m\cE(L_m)
       +n\omega_{\cE}\left(\frac{2r}{n+r}\right)+rM_{\cE}.
\end{align*}
Summing the exponentials proves \eqref{eq:path-to-edge-estimate}.

For fixed separation scales, $r$ is independent of $n$, and the logarithm
of the multiplicative factor in \eqref{eq:path-to-edge-estimate}, divided by
$n$, tends to zero:
\[
 \lim_{n\to\infty}
 \left[\omega_{\cE}\left(\frac{2r}{n+r}\right)+\frac{rM_{\cE}}n\right]=0.
\]
The index shift from
$n$ to $n+r$ does not change either normalized limsup or normalized liminf.
Taking the limits in \eqref{eq:edge-to-path-estimate} and
\eqref{eq:path-to-edge-estimate}, and then letting the separation scales tend
to zero, proves the final assertion.
\end{proof}

The key step is that an arbitrary stationary path measure can be replaced by a
Markov path measure without changing the energy.

\begin{proposition}[Markov replacement]\label{prop:markovization}
Let $T$ be a forward expansive correspondence on a compact metric space.
For every
$\nu\in\Inv(\OmegaT,\sigma)$ there exist
$Q\in\Ker(X;T)$ and $\mu\in\Inv(X,Q)$ such that
\begin{equation}\label{eq:markovization-marginal}
  (p_2)_*\nu=\gamma_{\mu,Q}
\end{equation}
and
\begin{equation}\label{eq:markovization-entropy}
  h_\nu(\sigma)\leq h_\mu(Q).
\end{equation}
Conversely, every stationary pair $(\mu,Q)$ induces a shift-invariant Markov
measure $\nu_{\mu,Q}$ on $\OmegaT$ satisfying
\begin{equation}\label{eq:markov-converse}
 (p_2)_*\nu_{\mu,Q}=\gamma_{\mu,Q},
 \qquad
 h_{\nu_{\mu,Q}}(\sigma)=h_\mu(Q).
\end{equation}
\end{proposition}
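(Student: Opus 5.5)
Given $\nu\in\Inv(\OmegaT,\sigma)$, I will build the pair by disintegration. Let $\gamma:=(p_2)_*\nu\in\Prob(\GammaT)$ and $\mu:=(\proj_1)_*\gamma$, which is the law of $x_0$ under $\nu$. Since $X$ is standard Borel, $\gamma$ disintegrates over its first marginal as $\gamma(dx,dy)=\mu(dx)Q_x(dy)$ with $Q$ a Borel kernel. Because $\gamma(\GammaT)=1$ and $\GammaT$ is closed, we get $Q_x(T(x))=1$ for $\mu$-a.e.\ $x$, and then $\supp Q_x\subseteq T(x)$ since $T(x)$ is closed. On the exceptional $\mu$-null set I redefine $Q_x$ to be $\delta_{s(x)}$ for a Borel selection $s$ of $T$. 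Such a selection exists by Kuratowski--Ryll-Nardzewski, because $\GammaT$ closed makes $T$ upper semicontinuous with closed values, hence measurable. Shift invariance of $\nu$ gives that the law of $x_1$ equals the law of $x_0$, so $\mu Q=(\proj_2)_*\gamma=\mu$. This proves \eqref{eq:markovization-marginal}.

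For the entropy inequality \eqref{eq:markovization-entropy}, I first reduce both entropies to a single partition. Forward expansivity makes $\sigma$ positively expansive (shown above), so a finite partition $\mathcal A$ of $X$ with $\operatorname{diam}\mathcal A<\eps_0$ gives a cylinder partition $\widehat{\mathcal A}=\{\pi_0^{-1}A\}$ that is a one-sided generator. Hence $h_\nu(\sigma)=h_\nu(\sigma,\widehat{\mathcal A})=\lim\frac1n H_{(\pi_{0..n-1})_*\nu}(\mathcal A^{\times n})$, which is the generator lemma \cref{lem:entropy-usc-positive-expansive}. On the Markov side I use the same $\mathcal A$ as a lower bound: $h_\mu(Q)\ge h_\mu(Q,\mathcal A)$. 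It then suffices to compare, for each $n$, the $n$-step process $\xi_j=\mathcal A(x_j)$ under $\nu$ with the same process under $\nu_{\mu,Q}$. The two processes have the same two-coordinate distribution of $(x_j,x_{j+1})$ at the level of points, not merely of cells.

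The key step is the chain rule together with the fact that conditioning reduces entropy. I write
\[
H_\nu(\xi_0,\ldots,\xi_{n-1})=H(\xi_0)+\sum_{j}H_\nu(\xi_{j+1}\mid \xi_0,\ldots,\xi_j).
\]
Each conditional term is at most $H_\nu(\xi_{j+1}\mid \mathscr C_j)$, where $\mathscr C_j$ is the $\sigma$-algebra of $x_j$ (the point). This holds because $\xi_0,\ldots,\xi_j$ is measurable with respect to $x_0,\ldots,x_j$, and that is a refinement of $x_j$ alone. Under the Markov measure $\nu_{\mu,Q}$, the Markov property gives
\[
H_{\nu_{\mu,Q}}(\xi_{j+1}\mid x_0,\ldots,x_j)=H(\xi_{j+1}\mid x_j),
\]
and the right side is determined by $\gamma$ alone, so it is the same quantity for $\nu$. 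The main obstacle is that $h_\mu(Q,\mathcal A)$ is computed from the cell process $\xi$, not from the point process. So I need a lower bound for the Markov side in terms of the point-conditioned quantity. I will get it from expansivity: $x_j$ is essentially determined, up to a vanishing error, by the future cells $\xi_j,\xi_{j+1},\ldots$. Reversing the chain rule ordering (the past given the future) or using the generator property on the Markov side then gives
\[
h_\mu(Q,\mathcal A)=h_{\nu_{\mu,Q}}(\sigma)=H_{\nu_{\mu,Q}}(\widehat{\mathcal A}\mid\sigma^{-1}\mathscr B).
\]
Here $\mathscr B$ is the full Borel $\sigma$-algebra, so $\sigma^{-1}\mathscr B$ is generated by $x_1,x_2,\ldots$, and the Markov property reduces this to $H(\xi_0\mid x_1)$. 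Likewise $h_\nu(\sigma)=H_\nu(\widehat{\mathcal A}\mid\sigma^{-1}\mathscr B)\le H_\nu(\xi_0\mid x_1)$, because $\sigma^{-1}\mathscr B$ contains $x_1$. The quantity $H(\xi_0\mid x_1)$ depends only on $\gamma$, which $\nu$ and $\nu_{\mu,Q}$ share, so I obtain $h_\nu(\sigma)\le h_\mu(Q)$. I expect that justifying the formula $h=H(\widehat{\mathcal A}\mid\sigma^{-1}\mathscr B)$ for generators of noninvertible systems will be the technical core.

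For the converse, the Ionescu-Tulcea theorem gives the path measure $\nu_{\mu,Q}$ with finite-dimensional laws $\mu Q^{[n-1]}$. These laws are supported on $\cO_n(T)$, so $\nu_{\mu,Q}$ is concentrated on $\OmegaT$. Stationarity $\mu Q=\mu$ gives $\sigma$-invariance, and $(p_2)_*\nu_{\mu,Q}=\mu Q^{[1]}=\gamma_{\mu,Q}$. For the entropies, $h_\mu(Q,\mathcal A)=h_{\nu_{\mu,Q}}(\sigma,\widehat{\mathcal A})$ holds for every $\mathcal A$ directly from the definitions. Taking suprema, the lower bound $h_{\nu_{\mu,Q}}(\sigma)\ge h_\mu(Q)$ is immediate. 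The upper bound follows because the cylinder partitions with small diameter are generators, so $h_{\nu_{\mu,Q}}(\sigma)$ equals $h_{\nu_{\mu,Q}}(\sigma,\widehat{\mathcal A})\le h_\mu(Q)$.
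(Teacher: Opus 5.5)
Your final argument is correct, and its core is the same as the paper's. Both express $h_\nu(\sigma)$ as the entropy of the time-zero $\mathcal A$-cell given the future. Both use conditional independence of past and future under $\nu_{\mu,Q}$ to replace the future by $x_1$. Both conclude because the conditional entropy given $x_1$ alone depends only on $\gamma$ and can only be larger.

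The execution differs. The paper:
\begin{itemize}
\item passes to the two-sided extension $\bar\nu$;
\item uses forward expansivity a second time to see that the inverse fibres $T^{-1}(x)$ are finite and $c$-separated;
\item converts the cell entropy into the pointwise formula $h_\nu(\sigma)=\int H(P_{\boldsymbol x})\,d\nu$;
\item applies concavity of Shannon entropy to the mixture $R_x=\int P_{\boldsymbol y}\,dS_x$.
\end{itemize}
This buys an explicit description of the entropy as the average entropy of the backward branch distributions.

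You stay one-sided and at the level of cells. You use only $h_\nu(\sigma,\widehat{\mathcal A})=H_\nu\bigl(\widehat{\mathcal A}\mid\bigvee_{i\geq1}\sigma^{-i}\widehat{\mathcal A}\bigr)$, together with $\bigvee_{i\geq1}\sigma^{-i}\widehat{\mathcal A}=\sigma^{-1}\mathscr B$ from the generator property. The first identity holds without invertibility, so the step you flag as the technical core is standard. Your route avoids the fibre analysis entirely and is shorter. In the converse you use a small-diameter generator, whereas the paper uses the generating cylinder algebra, which needs no expansivity; under the standing hypothesis both work.

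One correction: delete the forward chain-rule paragraph. The bound $H_\nu(\xi_{j+1}\mid\xi_0,\ldots,\xi_j)\leq H_\nu(\xi_{j+1}\mid x_j)$ is false, because the $\sigma$-algebra generated by $\xi_0,\ldots,\xi_j$ does not contain that of $x_j$, so conditioning reduces nothing here. For single-valued $T$ the right side is $0$. By stationarity the left side is at least $h_\nu(\sigma,\widehat{\mathcal A})=h_\nu(\sigma)$, which can be positive. Your backward argument does not use this bound, so the proof stands without it.
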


\begin{proof}
Using disintegration of the two-coordinate marginal and conditional entropy,
as in the proofs of
\cite[Lemma~6.17 and Proposition~6.19]{li-li-zhang-correspondences}, let
$\gamma=(p_2)_*\nu$ and let $\mu$ be its first marginal.  For every Borel
$A\subseteq X$, shift invariance gives
\[
 \gamma(X\times A)
 =\nu\{x_1\in A\}
 =\nu\{x_0\in A\}
 =\gamma(A\times X),
\]
so the two marginals of $\gamma$ are both $\mu$.  Since $X$ is a compact
metric space, disintegration with respect to the first coordinate gives a
Borel kernel $Q^0$ such that
\[
 \gamma(dx,dy)=\mu(dx)Q^0_x(dy),
 \qquad Q^0_x(T(x))=1
 \quad\text{for $\mu$-almost every $x$}.
\]
The Kuratowski--Ryll-Nardzewski selection theorem
\cite[Theorem~12.13]{kechris-descriptive-set-theory} gives a Borel map
$s:X\to X$ with $s(x)\in T(x)$.  The exceptional set on which
$Q_x^0(T(x))<1$ is Borel, because integration of the Borel function
$\mathbf1_{\GammaT}(x,y)$ against the kernel $Q_x^0$ is Borel.  On this
$\mu$-null set
where the support assertion for $Q^0$ may fail, replace $Q^0_x$ by
$\delta_{s(x)}$.  The resulting Borel kernel $Q$ lies in $\Ker(X;T)$ and still
satisfies
\[
 \gamma(dx,dy)=\mu(dx)Q_x(dy).
\]
Taking the second marginal now yields
\[
 (\mu Q)(A)=\gamma(X\times A)=\mu(A),
\]
and hence $\mu\in\Inv(X,Q)$ and
\eqref{eq:markovization-marginal} holds.

The needed conditional-entropy calculation is as follows.
If $c>0$ is a forward expansive constant, then any two distinct points in $T^{-1}(x)$
are $c$-separated: append the same future path beginning at $x$ and apply
forward expansivity.  Here
$T^{-1}(x):=\{y\in X:x\in T(y)\}$.  Compactness of $X$ therefore gives a uniform finite
bound on the cardinalities of the inverse fibres.  Choose a finite Borel
partition $\mathcal A$ of $X$ whose atoms have diameter smaller than $c$.

Let $\bar\nu$ be the stationary probability measure on the two-sided
admissible path space
$\{(x_j)_{j\in\mathbb Z}\in X^{\mathbb Z}:x_{j+1}\in T(x_j)
\text{ for all }j\in\mathbb Z\}$ whose restriction to the coordinates $j\geq0$ is
$\nu$; its finite-coordinate distributions are determined by stationarity.
Write $(X_j)_{j\in\mathbb Z}$ for its coordinate process, and let
$\mathcal A_j$ be the partition according to the $\mathcal A$-atom containing
$X_j$. The $\mathcal A$-name of a path is the sequence of partition atoms
containing its coordinates. Equality of all nonnegative $\mathcal A$-names keeps two admissible
paths at distance less than $c$ at every time, so forward expansivity makes
the coordinate partition a one-sided generator.  Hence the finite-generator
entropy identity, followed by continuity of conditional entropy along the
increasing future sigma-algebras, gives
\begin{equation}\label{eq:conditional-future-entropy}
 \begin{split}
 h_\nu(\sigma)
 &=\lim_{m\to\infty}
   H_{\bar\nu}\left(\mathcal A_{-1}\,\middle|\,
      \bigvee_{j=0}^{m}\mathcal A_j\right)\\
 &=H_{\bar\nu}\left(\mathcal A_{-1}\,\middle|\,
      \sigma(X_j:j\geq0)\right).
 \end{split}
\end{equation}
The last sigma-algebra is indeed the whole future sigma-algebra: the
nonnegative $\mathcal A$-name determines the future path, and an injective
Borel map between standard Borel spaces has a Borel inverse on its image.

Denote by $P_{\boldsymbol x}\in\Prob(X)$ the conditional probability
measure of $X_{-1}$ given the future $\boldsymbol x=(X_0,X_1,\ldots)$, by
$R_x\in\Prob(X)$ the conditional probability measure of $X_{-1}$ given
$X_0=x$, and by $S_x\in\Prob(\OmegaT)$ that of the future given $X_0=x$.
These regular conditional probabilities exist because the spaces are
standard Borel. Almost surely, $P_{\boldsymbol x}$ and $R_x$ are supported
on the finite sets $T^{-1}(x_0)$ and $T^{-1}(x)$, respectively.
For such a measure $\lambda$, write
$H(\lambda):=-\sum_{u\in\supp\lambda}\lambda(\{u\})\log\lambda(\{u\})$.
Each atom of $\mathcal A$ contains at most one point of an inverse fibre,
so \eqref{eq:conditional-future-entropy} becomes
\begin{equation}\label{eq:path-backward-entropy}
 h_\nu(\sigma)
 =\int_{\OmegaT}H(P_{\boldsymbol x})\,d\nu(\boldsymbol x).
\end{equation}
The tower property also gives
\begin{equation}\label{eq:backward-kernel-mixture}
 R_x=\int_{\OmegaT}P_{\boldsymbol y}\,dS_x(\boldsymbol y)
 \quad\text{for $\mu$-almost every $x$}.
\end{equation}

Conversely, given a stationary pair $(\mu,Q)$, Kolmogorov extension defines a
probability measure $\nu_{\mu,Q}$ by the cylinder formula
\begin{align*}
 &\nu_{\mu,Q}(A_0\times\cdots\times A_n\times X^{\N})\\
 &\qquad:=\int_{A_0}\int_{A_1}\cdots\int_{A_n}
 dQ_{x_{n-1}}(x_n)\cdots dQ_{x_0}(x_1)\,d\mu(x_0).
\end{align*}
The support property of $Q$ gives $\nu_{\mu,Q}(\OmegaT)=1$, while
$\mu Q=\mu$ gives shift invariance.  Taking $n=1$ in the cylinder formula
gives $(p_2)_*\nu_{\mu,Q}=\gamma_{\mu,Q}$.

For the Markov path measure, kernel entropy agrees with shift entropy.  Let
$\pi_0(\boldsymbol x)=x_0$.  For a finite Borel partition
$\mathcal B\in\mathcal P_X$, write
$\widehat{\mathcal B}:=\pi_0^{-1}\mathcal B$.  Since the $n$-coordinate marginal
of $\nu_{\mu,Q}$ is $\mu Q^{[n-1]}$, the definition of kernel entropy gives
\begin{align*}
 h_\mu(Q,\mathcal B)
 &=\lim_{n\to\infty}\frac1n
   H_{\mu Q^{[n-1]}}(\mathcal B^{\times n})\\
 &=\lim_{n\to\infty}\frac1n
   H_{\nu_{\mu,Q}}
    \left(\bigvee_{j=0}^{n-1}
       \sigma^{-j}\widehat{\mathcal B}\right)\\
 &=h_{\nu_{\mu,Q}}(\sigma,\widehat{\mathcal B}).
\end{align*}
Every finite partition in the cylinder algebra is refined by
$\bigvee_{j=0}^{m-1}\sigma^{-j}\widehat{\mathcal B}$ for some $m$ and some
finite $\mathcal B$.  The cylinder algebra generates the Borel sigma-algebra
of $\OmegaT$, so the generating-algebra form of the Kolmogorov--Sinai theorem
and the identity
\[
 h_{\nu_{\mu,Q}}\left(
   \sigma,\bigvee_{j=0}^{m-1}\sigma^{-j}\widehat{\mathcal B}\right)
 =h_{\nu_{\mu,Q}}(\sigma,\widehat{\mathcal B})
\]
give
\begin{equation}\label{eq:kernel-path-entropy}
 h_\mu(Q)
 =\sup_{\mathcal B\in\mathcal P_X}h_\mu(Q,\mathcal B)
 =h_{\nu_{\mu,Q}}(\sigma).
\end{equation}
This is the finite-coordinate identity underlying
\cite[Lemma~5.16, Theorem~5.15 and Remark~6.15]{li-li-zhang-correspondences}.

The two-coordinate distributions of $\nu$ and $\nu_{\mu,Q}$ agree.  Therefore $R_x$
is also the conditional probability measure of the preceding state given the present state
under $\nu_{\mu,Q}$.  The cylinder formula shows that a stationary Markov
chain has conditionally independent past and future given the present:
conditioning $X_{-1}$ on $(X_0,\ldots,X_m)$ gives $R_{X_0}$ for every $m$,
and a monotone-class argument gives the same conditional probability measure after observing
the entire future.  Applying \eqref{eq:path-backward-entropy} to
$\nu_{\mu,Q}$ thus yields
\begin{equation}\label{eq:markov-backward-entropy}
 h_{\nu_{\mu,Q}}(\sigma)=\int_X H(R_x)\,d\mu(x).
\end{equation}
By concavity of Shannon entropy and
\eqref{eq:backward-kernel-mixture},
$\int_{\OmegaT}H(P_{\boldsymbol y})\,dS_x(\boldsymbol y)\leq H(R_x)$
for $\mu$-almost every $x$.
After integration in $x$, equations
\eqref{eq:path-backward-entropy}, \eqref{eq:markov-backward-entropy}, and
\eqref{eq:kernel-path-entropy} give
$h_\nu(\sigma)
 \leq h_{\nu_{\mu,Q}}(\sigma)
 =h_\mu(Q),$ which proves \eqref{eq:markovization-entropy}.  The cylinder construction,
the two-coordinate identity, and \eqref{eq:kernel-path-entropy} prove
\eqref{eq:markov-converse}.
\end{proof}

\begin{corollary}\label{cor:sup-markov}
Let $T$ be a forward expansive correspondence on a compact metric space and
let $\cE\in C(\Prob(\GammaT),\R)$.  Then
\[
\sup_{\nu\in\Inv(\OmegaT,\sigma)}
   \bigl\{h_\nu(\sigma)+\widehat\cE(\nu)\bigr\}=\sup_{\substack{Q\in\Ker(X;T)\\\mu\in\Inv(X,Q)}}
   \bigl\{h_\mu(Q)+\cE(\gamma_{\mu,Q})\bigr\}.
\]
\end{corollary}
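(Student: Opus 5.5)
The plan is to prove the two inequalities between the suprema separately, using \cref{prop:markovization} in each direction together with the definition $\widehat\cE(\nu)=\cE((p_2)_*\nu)$.

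For ``$\leq$'', fix $\nu\in\Inv(\OmegaT,\sigma)$. The first half of \cref{prop:markovization} produces $Q\in\Ker(X;T)$ and $\mu\in\Inv(X,Q)$ with $(p_2)_*\nu=\gamma_{\mu,Q}$ and $h_\nu(\sigma)\leq h_\mu(Q)$. Then
\[
 h_\nu(\sigma)+\widehat\cE(\nu)
 =h_\nu(\sigma)+\cE(\gamma_{\mu,Q})
 \leq h_\mu(Q)+\cE(\gamma_{\mu,Q}),
\]
which is bounded by the right-hand supremum. Taking the supremum over $\nu$ gives the inequality.

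For ``$\geq$'', fix a stationary pair $(\mu,Q)$. The converse half of \cref{prop:markovization} gives the Markov measure $\nu_{\mu,Q}\in\Inv(\OmegaT,\sigma)$ with $(p_2)_*\nu_{\mu,Q}=\gamma_{\mu,Q}$ and $h_{\nu_{\mu,Q}}(\sigma)=h_\mu(Q)$. Hence
\[
 h_\mu(Q)+\cE(\gamma_{\mu,Q})=h_{\nu_{\mu,Q}}(\sigma)+\widehat\cE(\nu_{\mu,Q}),
\]
and this is at most the left-hand supremum. Taking the supremum over all stationary pairs finishes the proof.

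There is no real obstacle, since all the substantive work (disintegration, the conditional-entropy comparison, and the Markov-chain entropy identity) is already in \cref{prop:markovization}. The only points to confirm are bookkeeping. Both suprema are finite: $\cE$ is bounded, and the entropies are bounded by the topological entropy of the positively expansive shift. In any case the two inequalities hold in $[-\infty,+\infty]$ regardless. Also, the kernel $Q$ produced in the first direction satisfies $\supp Q_x\subseteq T(x)$ for every $x$, not only $\mu$-almost everywhere, so it genuinely lies in $\Ker(X;T)$; the proof of \cref{prop:markovization} arranges this through the Borel selection $s$.
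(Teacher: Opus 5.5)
Your proposal is correct and is essentially the paper's proof: both directions come from \cref{prop:markovization}. The forward half preserves the two-coordinate marginal, and hence the energy, without decreasing entropy. The converse Markov measure $\nu_{\mu,Q}$ shows that every stationary transition pair is realized by a path measure with the same entropy and energy.
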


\begin{proof}
Apply \cref{prop:markovization}.  The Markov replacement preserves the
two-coordinate marginal and hence the energy, while it does not decrease
entropy.  The converse construction shows that every stationary transition
pair occurs among the path measures.
\end{proof}

\section{Variational principle and equilibrium pairs}\label{sec:vp}

\begin{definition}\label{def:abundance}
We say that $(T,\cE)$ has an \emph{abundance of ergodic path measures} if, for
every $\nu\in\Inv(\OmegaT,\sigma)$ and every $\eta>0$, there exists an ergodic
$\rho\in\Inv(\OmegaT,\sigma)$ such that $h_\rho(\sigma)+\widehat\cE(\rho)
 >h_\nu(\sigma)+\widehat\cE(\nu)-\eta.$

\end{definition}

This is \cite[Definition~1.1]{buzzi-kloeckner-leplaideur-nonlinear} for the
system $(\OmegaT,\sigma)$ with energy $\widehat\cE$.

The ergodic measures of $(\OmegaT,\sigma)$ are called \emph{entropy dense}
if, for every $\nu\in\Inv(\OmegaT,\sigma)$, every weak-star neighborhood
$\mathcal U$ of $\nu$, and every $\eta>0$, there is an ergodic
$\rho\in\mathcal U$ such that
$h_\rho(\sigma)>h_\nu(\sigma)-\eta$.

\begin{proposition}\label{prop:abundance-criteria}
Let $T$ be a forward expansive correspondence on a compact metric space and
let $\cE\in C(\Prob(\GammaT),\R)$.
Each of the following conditions implies abundance of ergodic path measures.
\begin{enumerate}[label=\textup{(\roman*)}]
 \item The ergodic measures of $(\OmegaT,\sigma)$ are entropy dense.
 \item The energy $\cE$ is convex on $\Prob(\GammaT)$.
\end{enumerate}
In particular, condition \textup{(i)} applies whenever a specification theorem
for $T$ yields entropy density for its forward expansive orbit shift.
\end{proposition}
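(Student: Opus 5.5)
For \textup{(i)}, fix $\nu\in\Inv(\OmegaT,\sigma)$ and $\eta>0$, and use continuity of the lifted energy at $\nu$. The map $\nu\mapsto(p_2)_*\nu$ is weak-star continuous, and $\cE$ is continuous on $\Prob(\GammaT)$. Hence $\widehat\cE$ is continuous, and there is a weak-star neighborhood $\mathcal U$ of $\nu$ on which $\widehat\cE(\rho)>\widehat\cE(\nu)-\eta/2$. Entropy density then gives an ergodic $\rho\in\mathcal U$ with $h_\rho(\sigma)>h_\nu(\sigma)-\eta/2$. Adding the two inequalities yields $h_\rho(\sigma)+\widehat\cE(\rho)>h_\nu(\sigma)+\widehat\cE(\nu)-\eta$, which is the abundance condition of \cref{def:abundance}.

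For \textup{(ii)}, I would use the ergodic decomposition. Write $\nu=\int\rho\,d\tau(\rho)$, where $\tau$ is a Borel probability on $\Prob(\OmegaT)$ concentrated on ergodic measures. Three facts are needed.
\begin{enumerate}[label=\textup{(\alph*)}]
 \item \emph{Entropy is affine over the decomposition:} $h_\nu(\sigma)=\int h_\rho(\sigma)\,d\tau(\rho)$. This is Jacobs' theorem; its use is justified here because $\sigma$ is positively expansive, so the entropy map is upper semicontinuous and bounded (by \cref{lem:entropy-usc-positive-expansive}).
 \item \emph{Convexity transfers to the lift:} $\widehat\cE=\cE\circ(p_2)_*$ is convex, because $(p_2)_*$ is affine.
 \item \emph{Jensen's inequality for $\widehat\cE$:} $\widehat\cE(\nu)\le\int\widehat\cE(\rho)\,d\tau(\rho)$. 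To prove it, approximate $\tau$ weak-star by finitely supported probabilities $\tau_k$ whose barycenters $\nu_k$ converge to $\nu$. This is possible since $\Prob(\OmegaT)$ is a compact convex metrizable set and barycenters depend continuously on the measure. Finite convexity gives $\widehat\cE(\nu_k)\le\int\widehat\cE\,d\tau_k$. Since $\widehat\cE$ is continuous on a compact set, both sides pass to the limit.
\end{enumerate}
Combining (a) and (c) gives $\int\bigl(h_\rho(\sigma)+\widehat\cE(\rho)\bigr)\,d\tau(\rho)\ge h_\nu(\sigma)+\widehat\cE(\nu)$. Hence the set of ergodic $\rho$ with $h_\rho(\sigma)+\widehat\cE(\rho)>h_\nu(\sigma)+\widehat\cE(\nu)-\eta$ has positive $\tau$-measure, and in particular it is nonempty.

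The final sentence of the proposition is an immediate consequence of \textup{(i)}.

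The only delicate points are the two measure-theoretic facts used in \textup{(ii)}: the affinity of entropy over ergodic decompositions, and Jensen's inequality for a continuous convex function of a barycenter on $\Prob(\OmegaT)$. I would cite Jacobs' theorem for the first, and prove the second by the finite-approximation argument in (c). Note that this route needs no semicontinuity of entropy beyond what is standard.
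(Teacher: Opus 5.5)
Your proposal is correct and follows the paper's proof. Part (i) uses the same argument: continuity of the lifted energy near $\nu$ combined with entropy density. Part (ii) uses the same ergodic decomposition, combining affinity of entropy with Jensen's inequality for the convex lifted energy; your finite-approximation argument proves the Jensen step that the paper simply asserts, and you could drop the appeal to expansivity for Jacobs' theorem, which holds for any continuous map of a compact metric space.
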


\begin{proof}
Under \textup{(i)}, given $\nu$ and $\eta>0$, entropy density supplies an
ergodic $\rho$ arbitrarily close to $\nu$ with
$h_\rho(\sigma)>h_\nu(\sigma)-\eta/2$.  Choose it close enough that continuity
of $\widehat\cE$ also gives
$\widehat\cE(\rho)>\widehat\cE(\nu)-\eta/2$.  This is precisely abundance.

For \textup{(ii)}, let $\nu=\int\rho\,d\tau(\rho)$ be the ergodic decomposition:
$\tau$ is a probability measure on $\Inv(\OmegaT,\sigma)$, concentrated on
the ergodic measures, with barycenter $\nu$.
Entropy satisfies the ergodic-decomposition formula, and
\[
 (p_2)_*\nu=\int (p_2)_*\rho\,d\tau(\rho).
\]
Convexity gives
\[
 \widehat\cE(\nu)
 \leq\int\widehat\cE(\rho)\,d\tau(\rho).
\]
The affinity of entropy gives
\[
 h_\nu(\sigma)=\int h_\rho(\sigma)\,d\tau(\rho),
\]
and consequently
\[
 h_\nu(\sigma)+\widehat\cE(\nu)
 \leq\int
   \bigl(h_\rho(\sigma)+\widehat\cE(\rho)\bigr)\,d\tau(\rho).
\]
For every $\eta>0$, the set of ergodic components whose nonlinear pressure is
greater than $h_\nu(\sigma)+\widehat\cE(\nu)-\eta$ must therefore have positive
$\tau$-measure.  Any component in this set proves abundance.
\end{proof}

We use the specification property of
\cite[Definition~7.1]{li-li-zhang-correspondences}: for every $\eps>0$
there is $M\in\N$ such that any finite collection of admissible orbit
segments can be traced, in the prescribed order, by one path with error
less than $\eps$, provided the prescribed gaps exceed $M$.
More precisely, for segments $(x^{(i)}_j)_{j=0}^{n_i}$, $1\leq i\leq k$,
and integers $0=a_1<\cdots<a_k$ with
$a_{i+1}-(a_i+n_i+1)>M$, there is $\boldsymbol y\in\OmegaT$ such that
\[
 d(y_{a_i+j},x^{(i)}_j)<\eps
 \qquad(1\leq i\leq k,\ 0\leq j\leq n_i).
\]

\begin{corollary}\label{cor:specification-abundance}
Let $T$ be a forward expansive correspondence on a compact metric space with
the specification property of
\cite[Definition~7.1]{li-li-zhang-correspondences}.  Then $(T,\cE)$ has an
abundance of ergodic path measures for every
$\cE\in C(\Prob(\GammaT),\R)$.
\end{corollary}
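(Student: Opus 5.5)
The plan is to reduce the statement to criterion \textup{(i)} of \cref{prop:abundance-criteria}: it suffices to show that the ergodic measures of $(\OmegaT,\sigma)$ are entropy dense. Once this holds, that proposition gives abundance for every continuous $\cE$, since $\widehat\cE$ is continuous on $\Prob(\OmegaT)$.

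\textbf{Step 1: transfer specification to the shift.} Fix $\delta>0$ and choose $r$ with $2^{-r-1}<\delta/2$. Tracing finite admissible segments to within $\eps=\delta/4$ in the $X$-coordinates, over a segment extended by $r$ extra coordinates, gives $d_\omega$-closeness $<\delta$ along the corresponding shift orbit segment. This is the same comparison used in the proof of \cref{lem:edge-lift}\textup{(ii)}. The extension is possible because every finite orbit extends, since the values of $T$ are nonempty. Absorbing $r$ into the gap constant, $(\OmegaT,\sigma)$ satisfies Bowen-type specification with a tracing path in $\OmegaT$. Periodicity of the tracing orbit is not needed for the next step.

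\textbf{Step 2: entropy density.} Here I would invoke the standard theorem (Pfister--Sullivan, after Eizenberg--Kifer--Weiss) that for a continuous map of a compact metric space with the specification property and upper semicontinuous entropy map, ergodic measures are entropy dense. Upper semicontinuity holds because the shift is positively expansive under forward expansivity of $T$; this is the remark after the definition of forward expansivity and \cref{lem:entropy-usc-positive-expansive}.

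If one prefers a self-contained route, the argument runs as follows. Approximate $\nu$ by a finite convex combination of ergodic measures $\nu_i$ with rational weights. For each $\nu_i$, use Katok's entropy formula to choose many $(n,\delta)$-separated orbit segments whose empirical measures are close to $\nu_i$. Concatenate these segments periodically via specification, inserting gaps of bounded length $M$. The concatenated points form a closed invariant set of entropy at least $h_\nu-\eta$, all of whose invariant measures are close to $\nu$. An ergodic measure of maximal entropy on this set, which exists by upper semicontinuity, is then the required $\rho$.

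\textbf{Main obstacle.} The delicate point is the construction in Step 2, specifically controlling the entropy loss and the weak-star error introduced by the gaps of length $M$ while still producing a single ergodic measure. The concatenation must ensure that every invariant measure on the constructed set lies in the prescribed neighborhood $\mathcal U$. I would handle this by taking the block length $n\gg M$, so that gaps contribute $O(M/n)$ both to the empirical measures and to the exponential growth rate. Separation of distinct concatenations follows from $(n,\delta)$-separation of the blocks together with tracing error below $\delta/4$.
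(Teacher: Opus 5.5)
Your proposal is correct and takes essentially the same route as the paper: reduce to \cref{prop:abundance-criteria}\textup{(i)}, pass specification to the orbit shift $(\OmegaT,\sigma)$, and invoke the Pfister--Sullivan entropy-density theorem. The paper cites \cite[Propositions~6.3 and~7.3]{li-li-zhang-correspondences} for expansivity and specification of the shift, whereas you rederive the shift specification from the $d_\omega$ tail estimate of \cref{lem:edge-lift}\textup{(ii)} by absorbing $r$ into the gap constant. Your optional self-contained concatenation sketch is an extra that the paper replaces by this citation.
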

\begin{proof}
By \cite[Propositions~6.3 and~7.3]{li-li-zhang-correspondences}, the orbit
shift is positively expansive and has specification. Specification implies
the approximate product property used in
\cite[Theorem~2.1]{pfister-sullivan-approximate-product}: arbitrarily long
orbit blocks can be traced in sequence with arbitrarily small relative gaps
and an arbitrarily small proportion of untraced times. Indeed, for a fixed
accuracy, specification traces every time in each block and permits a fixed
gap length; the ratio of this length to the block length tends to zero.
Compactness extends tracing from finitely many blocks to an infinite sequence.
The cited theorem therefore makes the ergodic path measures entropy dense.
Apply \cref{prop:abundance-criteria}(i).
\end{proof}

For a continuous map $S:Y\to Y$ of a compact metric space $(Y,d_Y)$,
write
\[
 d_n^S(u,v):=\max_{0\leq j<n}d_Y(S^ju,S^jv).
\]
Here $(n,\delta)$-separation means pairwise $d_n^S$-distance greater than
$\delta$. We omit the superscript $S$ when the map is fixed.

\begin{lemma}[Entropy bound for empirical-measure distributions]
\label{lem:empirical-distribution-entropy}
Let $S:Y\to Y$ be a positively expansive continuous map of a compact metric
space, and let $0<\delta$ be smaller than an expansive constant.  For
$n_k\to\infty$, let $E_k\subset Y$ be $(n_k,\delta)$-separated and let
$(p_{k,x})_{x\in E_k}$ be a probability vector.  Set
\[
 \Xi_k:=\sum_{x\in E_k}p_{k,x}\,\delta_{\Delta_{n_k}(x)},
 \qquad
 \Delta_n(x):=\frac1n\sum_{j=0}^{n-1}\delta_{S^j x}.
\]
If $\Xi_k$ converges weakly to $\Xi\in\Prob(\Prob(Y))$, then $\Xi$ is
supported on $\Inv(Y,S)$ and
\begin{equation}\label{eq:empirical-distribution-entropy-bound}
 \limsup_{k\to\infty}\frac1{n_k}
   H((p_{k,x})_{x\in E_k})
 \leq\int_{\Inv(Y,S)}h_\rho(S)\,d\Xi(\rho).
\end{equation}
\end{lemma}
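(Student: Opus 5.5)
The plan is to reduce the statement to the classical Misiurewicz argument applied to the barycenters of $\Xi_k$. Then the affinity and upper semicontinuity of the entropy map of a positively expansive system convert the entropy of the limiting barycenter into the integral $\int h_\rho(S)\,d\Xi(\rho)$.

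\emph{Support on invariant measures.} For every $x$ and $n$, $\|S_*\Delta_n(x)-\Delta_n(x)\|_{\mathrm{TV}}\leq 2/n$, so $d_{\mathrm{BL}}(S_*\Delta_{n}(x),\Delta_{n}(x))\leq 2/n$. Hence $\Xi_k$ is concentrated on the closed set $\{\lambda:d_{\mathrm{BL}}(S_*\lambda,\lambda)\leq 2/n_k\}$. For fixed $N$, all $\Xi_k$ with $k$ large are concentrated on the closed set with $2/N$ in place of $2/n_k$. By the portmanteau theorem $\Xi$ is concentrated there as well, and letting $N\to\infty$ gives $\Xi(\Inv(Y,S))=1$, since $S_*$ is weak-star continuous.

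\emph{Entropy bound.} Let $m_k:=\int\lambda\,d\Xi_k(\lambda)=\sum_x p_{k,x}\Delta_{n_k}(x)$. Barycenters are weak-star continuous in $\Xi$, so $m_k\to m:=\int\rho\,d\Xi(\rho)$, and $m\in\Inv(Y,S)$. Choose a finite Borel partition $\mathcal A$ of $Y$ with $\operatorname{diam}\mathcal A<\delta$ and $m(\partial A)=0$ for all $A\in\mathcal A$; such partitions exist by taking balls of radii avoiding the countably many radii with charged spheres. Each atom of $\bigvee_{j<n_k}S^{-j}\mathcal A$ contains at most one point of the $(n_k,\delta)$-separated set $E_k$. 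Therefore, with $\sigma_k:=\sum_x p_{k,x}\delta_x$,
\[
 H((p_{k,x})_{x\in E_k})=H_{\sigma_k}\Bigl(\bigvee_{j=0}^{n_k-1}S^{-j}\mathcal A\Bigr).
\]
Now run Misiurewicz's argument from the proof of the variational principle. Fix $q$, split $\{0,\ldots,n_k-1\}$ into blocks of length $q$ with the $q$ possible offsets, and use subadditivity. Averaging over offsets and using concavity of $H$ in the measure gives
\[
 \frac1{n_k}H_{\sigma_k}\Bigl(\bigvee_{j=0}^{n_k-1}S^{-j}\mathcal A\Bigr)
 \leq\frac1q H_{m_k}\Bigl(\bigvee_{i=0}^{q-1}S^{-i}\mathcal A\Bigr)+\frac{2q\log\#\mathcal A}{n_k}.
\]
Since the atoms of $\bigvee_{i<q}S^{-i}\mathcal A$ have $m$-null boundaries, letting $k\to\infty$ gives $\limsup_k n_k^{-1}H(p_k)\leq q^{-1}H_m(\bigvee_{i<q}S^{-i}\mathcal A)$. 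Letting $q\to\infty$ yields the bound $h_m(S,\mathcal A)\leq h_m(S)$.

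\emph{Passing to the integral.} It remains to show $h_m(S)=\int h_\rho(S)\,d\Xi(\rho)$ when $m$ is the barycenter of $\Xi$. For a positively expansive map, $\rho\mapsto h_\rho(S)$ is affine and upper semicontinuous on the compact convex metrizable set $\Inv(Y,S)$; this is \cref{lem:entropy-usc-positive-expansive}, using a finite generator of diameter below the expansive constant. A bounded usc affine function satisfies the barycentric formula $f(\int\rho\,d\Xi)=\int f\,d\Xi$ for every probability measure $\Xi$ on the set. This follows by writing $f$ as a decreasing limit of continuous affine functions, or alternatively from Jacobs' theorem via ergodic decompositions. This is the step I expect to need the most care: the barycentric formula must hold for arbitrary $\Xi$, not just for the ergodic decomposition. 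If that citation is unavailable, I would instead refine the argument by grouping the points of $E_k$ according to which small cell of a finite partition of $\Prob(Y)$ contains $\Delta_{n_k}(x)$. I would apply the estimate above within each group and absorb the group-choice entropy, which is at most the logarithm of the number of cells, into an $o(n_k)$ error. Then using usc of $h$ cellwise gives the integral bound directly.
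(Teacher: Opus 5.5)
Your proposal is correct, and through the Misiurewicz estimate it follows the paper's proof. The paper shows $\Xi(\Inv(Y,S))=1$ with a countable dense family $(u_\ell)$ and the continuous functionals $\lambda\mapsto|\int(u_\ell-u_\ell\circ S)\,d\lambda|$, where you use closed $d_{\mathrm{BL}}$-sublevel sets and the portmanteau theorem. It then runs the same block argument on the same barycenters. You are right that only $h_m(S,\mathcal A)\leq h_m(S)$ is needed there, so the generator identity the paper invokes is not essential.

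The genuine difference is the final identity. The paper takes the ergodic decomposition $\vartheta_\rho$ of each $\rho$. It observes that $\int\vartheta_\rho\,d\Xi(\rho)$ is concentrated on ergodic measures and has barycenter $m$, so by uniqueness it is the ergodic decomposition of $m$, and it applies Jacobs' formula twice. This uses the simplex structure of $\Inv(Y,S)$ but no semicontinuity.

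Your route instead uses affinity and upper semicontinuity of $\rho\mapsto h_\rho(S)$, plus the barycentric formula for bounded usc affine functions. Note that affinity is the standard theorem and is not part of \cref{lem:entropy-usc-positive-expansive}. The barycentric formula is true, but your suggested justification hides the difficulty. The inequality you need, $h_m(S)\leq\int h_\rho(S)\,d\Xi(\rho)$, is exactly the direction that Jensen's inequality for a concave usc function does not supply. Moreover, writing $f$ as a decreasing limit of continuous affine functions requires showing that its continuous affine strict majorants form a downward directed family. Hahn--Banach alone does not give this, because the minimum of two affine majorants is only concave.

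A short self-contained substitute avoids this. Take a fine Borel partition $\{K_i\}$ of $\Inv(Y,S)$ and let $b_i$ be the barycenter of $\Xi(\cdot\cap K_i)/\Xi(K_i)$. Closed $d_{\mathrm{BL}}$-balls are convex, so $b_i$ stays close to $K_i$, and $\sum_i\Xi(K_i)\delta_{b_i}$ still has barycenter $m$. For continuous $g$ on $\Inv(Y,S)$ with $g(\rho)\geq h_\rho(S)$, affinity on finite combinations gives $\sum_i\Xi(K_i)g(b_i)\geq h_m(S)$. Uniform continuity then yields $\int g\,d\Xi\geq h_m(S)$, and letting $g$ decrease to the entropy map finishes the argument.
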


\begin{proof}
Choose a countable uniformly dense family $(u_\ell)_{\ell\geq1}$ in $C(Y)$.
For every $u\in C(Y)$ and every $x\in Y$,
\begin{equation*}
 \left|\int(u-u\circ S)\,d\Delta_n(x)\right|
 =\frac{|u(x)-u(S^n x)|}{n}
 \leq\frac{2\|u\|_\infty}{n}.
\end{equation*}
For fixed $\ell$, the map
\[
 F_\ell(\lambda):=
 \left|\int(u_\ell-u_\ell\circ S)\,d\lambda\right|
\]
is continuous on $\Prob(Y)$, and
$\int F_\ell\,d\Xi_k\leq2\|u_\ell\|_\infty/n_k$.
Weak convergence therefore gives $\int F_\ell\,d\Xi=0$.  Intersecting the
resulting full-measure sets over $\ell$ and using density shows that
$\Xi(\Inv(Y,S))=1$.  Since $\Inv(Y,S)$ is closed, the topological support of
$\Xi$ is contained in $\Inv(Y,S)$.

Let
\[
 m_k:=\sum_{x\in E_k}p_{k,x}\delta_x,
 \qquad
 \bar\nu_k:=\int\rho\,d\Xi_k(\rho)
 =\frac1{n_k}\sum_{j=0}^{n_k-1}S_*^j m_k.
\]
The barycenter map is continuous, so
$\bar\nu_k\to\bar\nu:=\int\rho\,d\Xi(\rho)$.  The preceding paragraph implies
that $\bar\nu$ is $S$-invariant.  Choose a finite Borel partition
$\mathcal A$ of diameter smaller than $\delta$, with
$\bar\nu(\partial A)=0$ for every $A\in\mathcal A$; such a partition is
obtained from a finite cover by sufficiently small balls whose boundary
spheres are $\bar\nu$-null.  Write
\[
 \mathcal A^r:=\bigvee_{j=0}^{r-1}S^{-j}\mathcal A.
\]
Distinct points of $E_k$ have distinct $\mathcal A$-names of length $n_k$,
since two points with the same name would have Bowen distance smaller than
$\delta$.  Hence
\[
 H((p_{k,x})_{x\in E_k})
 =H_{m_k}(\mathcal A^{n_k}).
\]
The following calculation adapts the Misiurewicz block argument in
\cite[Lemma~2.6]{buzzi-kloeckner-leplaideur-nonlinear}.  Specifically, for a
fixed block length $q$ and $n_k\geq2q$, split the interval
$\{0,\ldots,n_k-1\}$ in each of the $q$ possible offsets into blocks of length
$q$ and two remainders of total length at most $2q$.  Subadditivity of
partition entropy, followed by summation over the offsets, gives
\[
 \frac{q}{n_k}H_{m_k}(\mathcal A^{n_k})
 \leq
 \frac1{n_k}\sum_{j=0}^{n_k-1}H_{S_*^j m_k}(\mathcal A^q)
 +\frac{2q^2\log\#\mathcal A}{n_k}.
\]
Concavity of $\lambda\mapsto H_\lambda(\mathcal A^q)$ and the definition of
$\bar\nu_k$ therefore imply
\begin{equation}\label{eq:misiurewicz-block-bound}
 \frac1{n_k}H_{m_k}(\mathcal A^{n_k})
 \leq\frac1qH_{\bar\nu_k}(\mathcal A^q)
 +\frac{2q\log\#\mathcal A}{n_k}.
\end{equation}
Writing $\partial\mathcal A:=\bigcup_{A\in\mathcal A}\partial A$,
invariance of $\bar\nu$ makes every inverse image of $\partial\mathcal A$
null, and
\[
 \partial\mathcal A^q
 \subseteq\bigcup_{j=0}^{q-1}S^{-j}\partial\mathcal A.
\]
Hence $\bar\nu(\partial\mathcal A^q)=0$.  Weak convergence then gives
$H_{\bar\nu_k}(\mathcal A^q)\to H_{\bar\nu}(\mathcal A^q)$.  Letting first
$k\to\infty$ in \eqref{eq:misiurewicz-block-bound} and then $q\to\infty$
yields
\[
 \limsup_{k\to\infty}\frac1{n_k}H((p_{k,x})_{x\in E_k})
 \leq h_{\bar\nu}(S,\mathcal A).
\]
Since $\operatorname{diam}\mathcal A<\delta$ and $\delta$ is expansive,
$\mathcal A$ is a one-sided generating partition: equal nonnegative
$\mathcal A$-itineraries force two points to coincide.  The
Kolmogorov--Sinai theorem therefore gives
$h_{\bar\nu}(S,\mathcal A)=h_{\bar\nu}(S)$.

The last step uses the entropy-decomposition formula described in
\cite{walters-ergodic-theory}.  Specifically, let $\vartheta_\rho$ denote the
ergodic decomposition of
$\rho\in\Inv(Y,S)$, viewed as a probability measure on $\Inv(Y,S)$
concentrated on the ergodic measures and having barycenter $\rho$.
The probability
$\Theta:=\int\vartheta_\rho\,d\Xi(\rho)$ is supported on the ergodic measures
and has barycenter $\bar\nu$; uniqueness of ergodic decomposition makes it the
ergodic decomposition of $\bar\nu$.  Applying the entropy decomposition
formula twice and Tonelli's theorem gives
\[
 \begin{split}
 h_{\bar\nu}(S)
 &=\int h_\omega(S)\,d\Theta(\omega)\\
 &=\int\left(\int h_\omega(S)\,d\vartheta_\rho(\omega)\right)d\Xi(\rho)
 =\int h_\rho(S)\,d\Xi(\rho).
 \end{split}
\]
This proves \eqref{eq:empirical-distribution-entropy-bound}.
\end{proof}

\begin{lemma}\label{lem:fixed-scale-stabilization}
Let $S:Y\to Y$ be a positively expansive continuous map of a compact metric
space $(Y,d)$, let $c>0$ be an expansive constant, and let
$\mathcal G\in C(\Prob(Y),\R)$. Suppose that the upper and lower nonlinear
pressures of $(Y,S,\mathcal G)$, defined by the two limits in
\cref{def:nonlinear-pressure} using the partition sums below, agree with
common value $P$. Then, for every
$0<\delta<c/2$,
\begin{equation}\label{eq:abstract-fixed-scale}
 P=\lim_{n\to\infty}\frac1n\log Z_n(S,\mathcal G,\delta).
\end{equation}
Here $Z_n(S,\mathcal G,r)$ denotes the
separated-set partition sum
\[
 Z_n(S,\mathcal G,r)
 :=\sup_E\sum_{x\in E}
   \exp\bigl(n\mathcal G(\Delta_n(x))\bigr),
\]
where $E$ ranges over the $(n,r)$-separated subsets of $Y$, and
\[
 \Delta_n(x):=\frac1n\sum_{j=0}^{n-1}\delta_{S^j x}.
\]
\end{lemma}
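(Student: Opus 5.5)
Since $Z_n(S,\mathcal G,\delta)$ is nonincreasing in $\delta$, for each fixed $0<\delta<c/2$ we have
\[
\limsup_n\frac1n\log Z_n(S,\mathcal G,\delta)\le\overline P=P .
\]
So it suffices to prove, for fixed $\delta<c/2$ and every $\eps\in(0,\delta)$, the lower bound
\[
\liminf_{n}\frac1n\log Z_n(S,\mathcal G,\delta)\ \ge\ \liminf_n\frac1n\log Z_{n}(S,\mathcal G,\eps).
\]
Letting $\eps\to0$, the right side tends to $\underline P=P$, and the two bounds give \eqref{eq:abstract-fixed-scale}. The whole task is therefore to compare partition sums at a small scale $\eps$ with those at the fixed scale $\delta$, with only a subexponential loss.

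\textbf{Step 1 (uniform expansivity).} Fix $\delta<c/2$ and $\eps<\delta$. By compactness and positive expansivity there is an integer $N=N(\eps,\delta)$ with the following property: if $d(S^ju,S^jv)\le 2\delta$ for all $0\le j<N$, then $d(u,v)\le\eps$. This is the standard argument. Suppose it failed for every $N$. Then there would be pairs $(u_N,v_N)$ with $d(u_N,v_N)>\eps$ and $d_N(u_N,v_N)\le2\delta<c$. Any limit pair $(u,v)$ satisfies $d(u,v)\ge\eps$, so $u\ne v$, and $d(S^ju,S^jv)\le2\delta<c$ for all $j\ge0$. This contradicts expansivity. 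Applying the property at each time $j$, we get: $d_{n+N}(u,v)\le2\delta$ implies $d_n(u,v)\le\eps$.

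\textbf{Step 2 (comparison of sums).} Let $E$ be $(n+N,\eps)$-separated. I do not want to extract a $\delta$-separated subset of $E$ by a covering count, since that count depends on $n$. Instead I use Step 1 in reverse to control multiplicities.

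Take a maximal $(n+N,\delta)$-separated subset $F\subset E$. Every $x\in E$ lies within $d_{n+N}$-distance $\delta$ of some point of $F$. If two points $x,x'\in E$ are both within $\delta$ of the same $y\in F$, then $d_{n+N}(x,x')\le2\delta$, so by Step 1 $d_n(x,x')\le\eps$.

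This does \emph{not} contradict the $(n+N,\eps)$-separation of $E$: the two points could be separated only at times $n,\dots,n+N-1$. To handle this, I make the separation happen early. Apply Step 1 to a shifted window: use $E$ that is $(n,\eps)$-separated, and compare on $d_n$ versus $d_{n+N}$ after composing with the tail. Cleanest route: choose $E$ to be $(n,\eps)$-separated and $F\subset E$ maximal $(n-N,\delta)$-separated. Then points of $E$ attached to the same $y\in F$ are within $\eps$ on times $[0,n-2N)$, so they must be separated in the last $2N$ times. The number of such points is at most $K^{2N}$, where $K$ is the maximal cardinality of an $\eps/2$-separated set in $Y$. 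This bound is independent of $n$. Hence
\[
\sum_{x\in E}e^{n\mathcal G(\Delta_n x)}\le K^{2N}\sum_{y\in F}\max_{x\sim y}e^{n\mathcal G(\Delta_n x)} .
\]

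\textbf{Step 3 (correct the weights).} The weights in the last sum must be compared with those of the partition sum at time $n-N$. For $x\sim y$ we have $d_{n-N}(x,y)\le\delta$, but this does not force $\Delta_{n-N}x$ and $\Delta_{n-N}y$ to be weak-star close. This is the main obstacle.

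My fix is to replace each $y$ by the attached point $x$ maximizing the weight. The set of chosen representatives is still $(n-N,\delta)$-separated, after passing to a further $2\delta$-maximal selection whose multiplicity is again bounded by Step 1 and the constant $K^{2N}$. Then compare $\Delta_n x$ with $\Delta_{n-N}x$: the bounded-Lipschitz distance is at most $2N/n$, so the weights change by at most
\[
n\,\omega_{\mathcal G}(2N/n)+N\|\mathcal G\|_\infty ,
\]
exactly as in the proof of \cref{lem:edge-lift}(ii). Since $\omega_{\mathcal G}(2N/n)\to0$, this is $o(n)$.

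Combining Steps 2 and 3,
\[
Z_n(\eps)\le K^{4N}\,e^{o(n)}\,Z_{n-N}(\delta).
\]
Taking $\liminf\frac1n\log$ on both sides gives the desired inequality. The delicate point throughout is the bookkeeping of the separation windows in Steps 2–3, which must keep the multiplicity bounded independently of $n$.
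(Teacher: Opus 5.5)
Your Steps 0--2 are sound. The reduction to $\liminf_n\frac1n\log Z_n(S,\mathcal G,\delta)\ge\liminf_n\frac1n\log Z_n(S,\mathcal G,\varepsilon)$ is correct, as are the uniform-expansivity statement and the fibre bound $K^{2N}$; the fibre bound is essentially the paper's multiplicity argument. The gap is in Step 3. The heaviest points $x_y$ of distinct fibres need not be $(n-N,\delta)$-separated: the triangle inequality only gives $d_{n-N}(x_y,x_{y'})\ge d_{n-N}(y,y')-2\delta$, which is vacuous. If you then pass to a further maximal separated subset of $\{x_y\}$, its elements are arbitrary members of their new fibres, not the heaviest ones. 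You are back to a bound of the form $\sum_{y'}\max_{x\sim y'}e^{n\mathcal G(\Delta_n x)}$. Each further round costs another factor $K^{2N}$, and nothing bounds the number of rounds. So your argument does not establish $Z_n(\varepsilon)\le K^{4N}e^{o(n)}Z_{n-N}(\delta)$.

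The obstacle you identify is also not real here. It would be one without expansivity, since coupling alone only gives $d_{\mathrm{BL}}\le\delta$. But because $\delta<c$, your Step 1 argument works with an arbitrary target $a>0$ in place of $\varepsilon$. There is $N(a,\delta)$ such that $d_m(x,y)\le\delta$ forces $d(S^jx,S^jy)<a$ for $0\le j\le m-N(a,\delta)$. Hence $d_{\mathrm{BL}}(\Delta_m x,\Delta_m y)\le a+2N(a,\delta)/m$, and uniform continuity gives $|\mathcal G(\Delta_m x)-\mathcal G(\Delta_m y)|<\gamma$ for small $a$ and large $m$. This is the paper's key step. Take $\widehat E\subset E$ maximal $(n,\delta)$-separated and assign each $x$ a point $\pi(x)\in\widehat E$ within $\delta$. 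Combined with your fibre bound, this gives $Z_n(\varepsilon)\le Me^{n\gamma}Z_n(\delta)$ at the same time $n$, with no index shift. Alternatively, you can rescue your own route by building the maximal $(n,\delta)$-separated subset greedily, scanning $E$ in decreasing order of weight. Then every $x$ is attached to some $y$ whose weight is at least that of $x$, and Step 2 gives $Z_n(\varepsilon)\le K^{N}Z_n(\delta)$ with no comparison of empirical measures at all. One of these two ideas is needed; as written, Step 3 does not close.
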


\begin{proof}
Fix $0<\eps<\delta<c/2$ and $\gamma>0$.  Positive expansivity and compactness
give the following uniform form of expansivity: for every $a>0$ and every
$b<c$, there is $N=N(a,b)$ such that
\begin{equation}\label{eq:uniform-positive-expansivity}
 \max_{0\leq j<N}d(S^j u,S^j v)\leq b
 \quad\Longrightarrow\quad d(u,v)<a.
\end{equation}
Indeed, otherwise compactness would produce distinct $u,v$ whose entire
forward orbits remain at distance at most $b<c$.

Let $E$ be any $(n,\eps)$-separated set and choose a subset
$\widehat E\subset E$ maximal $(n,\delta)$-separated.  Assign to every
$x\in E$ a point $\pi(x)\in\widehat E$ with
$d_n(x,\pi(x))\leq\delta$.  Apply
\eqref{eq:uniform-positive-expansivity} with $b=\delta$.  Given $a>0$, for all
large $n$ we obtain
\[
 d(S^j x,S^j\pi(x))<a\quad(0\leq j\leq n-N),
\]
and hence
\[
 d_{\mathrm{BL}}(\Delta_n(x),\Delta_n(\pi(x)))
 \leq a+\frac{2N}{n}.
\]
Choose $a$ and then $n$ so that uniform continuity of $\mathcal G$ makes the
corresponding energy difference smaller than $\gamma$.

The fibres of $\pi$ have cardinality bounded independently of $n$.  In fact,
if $x,z$ lie in the same fibre, then $d_n(x,z)\leq2\delta<c$.
Set $N':=N(\eps,2\delta)$. Using
\eqref{eq:uniform-positive-expansivity} with $b=2\delta$ and $a=\eps$
shows that $x$ and $z$ cannot be separated before the last $N'$ iterates.
Thus the images of a fibre under $S^{n-N'}$ form an
$(N',\eps)$-separated set.  Compactness bounds its cardinality by a constant
$M=M(\eps,N')$.

Consequently, for all large $n$,
\[
 \sum_{x\in E}e^{n\mathcal G(\Delta_n(x))}
 \leq M e^{n\gamma}
 \sum_{y\in\widehat E}e^{n\mathcal G(\Delta_n(y))},
\]
and therefore
\begin{equation}\label{eq:fixed-scale-comparison}
 Z_n(S,\mathcal G,\eps)
 \leq M e^{n\gamma}Z_n(S,\mathcal G,\delta).
\end{equation}
The reverse inequality follows from monotonicity in the separation scale.
Taking normalized upper and lower limits in
\eqref{eq:fixed-scale-comparison}, then letting $\gamma\to0^+$ and
$\eps\to0^+$, shows that both limits at scale $\delta$ equal $P$.
This proves \eqref{eq:abstract-fixed-scale}.  It is the one-sided positively
expansive version of the comparison in
\cite[Section~2.4]{buzzi-kloeckner-leplaideur-nonlinear}.
\end{proof}

\begin{theorem}[Nonlinear variational principle]\label{thm:vp}
Let $T$ be a forward expansive correspondence on a compact metric space and let
$\cE\in C(\Prob(\GammaT),\R)$.  If $(T,\cE)$ has an abundance of ergodic path
measures, then
\begin{equation}\label{eq:vp}
 \underline\Pi_{\mathrm{top}}^{\cE}(T)
 =\overline\Pi_{\mathrm{top}}^{\cE}(T)
 =\sup_{\substack{Q\in\Ker(X;T)\\\mu\in\Inv(X,Q)}}
   \bigl\{h_\mu(Q)+\cE(\gamma_{\mu,Q})\bigr\}.
\end{equation}
\end{theorem}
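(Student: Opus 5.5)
By \cref{lem:edge-lift}, the upper and lower nonlinear pressures of $T$ coincide with those of the lifted system $(\OmegaT,\sigma,\widehat\cE)$. By \cref{cor:sup-markov}, the right-hand side of \eqref{eq:vp} equals
\[
 S^*:=\sup_{\nu\in\Inv(\OmegaT,\sigma)}\bigl\{h_\nu(\sigma)+\widehat\cE(\nu)\bigr\}.
\]
It therefore suffices to prove that
\[
 \overline\Pi\leq S^*\leq\underline\Pi
\]
for the positively expansive shift $\sigma$ on $\OmegaT$. Since $\underline\Pi\leq\overline\Pi$ always holds, this closes the chain of equalities. Here I use that $\sigma$ has an expansive constant, as noted after the definition of forward expansivity.

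\emph{Upper bound.} Fix $\delta$ below the expansive constant. For each $n$, pick an $(n,\delta)$-separated set $E_n$ with
\[
 \sum_{x\in E_n}e^{n\widehat\cE(\Delta_n(x))}\geq\tfrac12\widehat Z_n(\widehat\cE,\delta),
\]
and set $p_{n,x}\propto e^{n\widehat\cE(\Delta_n(x))}$. The Gibbs identity gives
\[
 \log\sum_x e^{n\widehat\cE(\Delta_n(x))}=H(p_n)+\sum_x p_{n,x}\,n\widehat\cE(\Delta_n(x)).
\]
Choose a subsequence attaining the limsup along which $\Xi_k=\sum_x p_{k,x}\delta_{\Delta_{n_k}(x)}$ converges to some $\Xi$. \Cref{lem:empirical-distribution-entropy} then bounds $\limsup H(p_k)/n_k$ by $\int h_\rho\,d\Xi(\rho)$. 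Continuity of $\widehat\cE$ on the compact space $\Prob(\OmegaT)$ gives $\int\widehat\cE\,d\Xi_k\to\int\widehat\cE\,d\Xi$. Hence the limsup at scale $\delta$ is at most
\[
 \int\bigl(h_\rho+\widehat\cE(\rho)\bigr)\,d\Xi(\rho)\leq S^*,
\]
because $\Xi$ is supported on invariant measures. To handle the limit $\delta\to0$, I would either apply this bound for every small $\delta$ directly (it holds for all small $\delta$), or invoke the fixed-scale reasoning of \cref{lem:fixed-scale-stabilization}. No abundance is needed for this direction.

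\emph{Lower bound.} This is where abundance enters. Given $\nu$ and $\eta>0$, abundance gives an ergodic $\rho$ with
\[
 h_\rho+\widehat\cE(\rho)>h_\nu+\widehat\cE(\nu)-\eta.
\]
For this ergodic $\rho$, I would run the Katok-type argument standard in the linear theory. The ergodic theorem gives a set $G_n$ with $\rho(G_n)>1/2$ on which $d_{\mathrm{BL}}(\Delta_n(x),\rho)<\kappa$, for $n$ large. Katok's entropy formula, in its positively expansive form for a fixed $\delta$ below the expansive constant, shows that a maximal $(n,\delta)$-separated subset of $G_n$ has cardinality at least $e^{n(h_\rho-\eta)}$. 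Each such point carries weight at least $e^{n(\widehat\cE(\rho)-\omega(\kappa))}$, where $\omega$ is the modulus of continuity of $\widehat\cE$. This yields
\[
 \liminf_{n\to\infty}\frac1n\log\widehat Z_n\geq h_\rho+\widehat\cE(\rho)-\eta-\omega(\kappa).
\]
Letting $\kappa,\eta\to0$ and taking the supremum over $\nu$ gives $\underline\Pi\geq S^*$.

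The main obstacle is making the Katok counting step rigorous for $\sigma$ on $\OmegaT$. A fallback that avoids the entropy formula is to choose a generating partition $\mathcal A$ with $\rho$-null boundary and apply Shannon--McMillan--Breiman. Then $G_n$ intersected with typical $\mathcal A^n$-cylinders requires at least $e^{n(h_\rho-\eta)}$ cylinders. Points in distinct cylinders, taken near the interiors, are $(n,\delta')$-separated for a suitable $\delta'$ up to bounded multiplicity, and the multiplicity loss is controlled by positive expansivity as in \cref{lem:fixed-scale-stabilization}.
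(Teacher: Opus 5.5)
Your proposal is correct and follows the paper's proof: the reduction through \cref{lem:edge-lift} and \cref{cor:sup-markov}, the fixed-scale upper bound via the Gibbs identity and \cref{lem:empirical-distribution-entropy}, and the abundance-based lower bound by counting near an ergodic $\rho$. The one cosmetic difference is in that counting step: you apply Katok's covering formula to a maximal separated subset of the good set, whereas the paper gets the same count from the Brin--Katok formula combined with Egorov's theorem and a minimal subcover.
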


\begin{proof}
By \cref{lem:edge-lift}, the two topological quantities on the left of
\eqref{eq:vp} are the nonlinear upper and lower pressures of
$(\OmegaT,\sigma,\widehat\cE)$.  We prove the path-space variational formula
directly.

The lower-bound construction adapts
\cite[Proposition~2.3]{buzzi-kloeckner-leplaideur-nonlinear} to the path
space.  Specifically, let $\nu\in\Inv(\OmegaT,\sigma)$ and $\eta>0$.
Abundance supplies an ergodic $\rho$ such that
\[
 h_\rho(\sigma)+\widehat\cE(\rho)
 >h_\nu(\sigma)+\widehat\cE(\nu)-\eta.
\]
Uniform continuity of $\widehat\cE$ gives $a>0$ such that
$d_{\mathrm{BL}}(\lambda,\rho)<2a$ implies
$\widehat\cE(\lambda)>\widehat\cE(\rho)-\eta$.  Put
\[
 B_n(\boldsymbol x,r)
 :=\{\boldsymbol y\in\OmegaT:
       d_{\omega,n}(\boldsymbol x,\boldsymbol y)<r\}.
\]
By Birkhoff's theorem, $\Delta_n(\boldsymbol x)\to\rho$ for
$\rho$-almost every $\boldsymbol x$.  Egorov's theorem therefore gives a
measurable set $A$ with $\rho(A)>3/4$ and $N_A$ such that
\[
 d_{\mathrm{BL}}(\Delta_n(\boldsymbol x),\rho)<a
 \quad(\boldsymbol x\in A,\ n\geq N_A).
\]
We use the following form of the Brin--Katok entropy formula.  Positive
expansivity provides a finite generator, so $h_\rho(\sigma)<\infty$. If
$\rho$ assigns positive mass to a point, ergodicity makes it supported on a periodic orbit and the
formula below is immediate.  In the non-atomic case it is the Brin--Katok
formula~\cite{brin-katok-local-entropy}.  Thus, for $\rho$-almost every
$\boldsymbol x$,
\begin{align*}
 h_\rho(\sigma)
 &=\lim_{r\to0^+}\liminf_{n\to\infty}
   -\frac1n\log\rho(B_n(\boldsymbol x,r))\\
 &=\lim_{r\to0^+}\limsup_{n\to\infty}
   -\frac1n\log\rho(B_n(\boldsymbol x,r)).
\end{align*}
Put
\[
 \ell_r(\boldsymbol x)
 :=\liminf_{n\to\infty}
   -\frac1n\log\rho(B_n(\boldsymbol x,r)).
\]
As $r>0$ decreases to zero, the functions $\ell_r$ increase to
$h_\rho(\sigma)$ almost everywhere.  We may therefore decrease the value of
$a$ chosen above so that
\[
 C:=\{\boldsymbol x:\ell_{2a}(\boldsymbol x)
                 >h_\rho(\sigma)-\eta/2\}
 \quad\text{satisfies}\quad \rho(C)>3/4.
\]
After decreasing $a$, enlarge $N_A$, if necessary, so that the
preceding empirical-measure estimate remains valid on $A$;
this is possible by the uniform convergence provided by
Egorov's theorem.
For $N\geq1$, define
\[
 B_N:=\left\{\boldsymbol x:
   \rho(B_n(\boldsymbol x,2a))
   \leq e^{-n(h_\rho(\sigma)-\eta)}
   \text{ for every }n\geq N\right\}.
\]
The definition of the lower limit gives
$C\subseteq\bigcup_{N\geq1}B_N$, and the sets $B_N$ increase with $N$.
Consequently, some $N_B$ satisfies $\rho(B_{N_B})>3/4$.  With
$B:=B_{N_B}$ we have $\rho(B_n(\boldsymbol x,2a))
 \leq e^{-n(h_\rho(\sigma)-\eta)}
 \quad(\boldsymbol x\in B,\ n\geq N_B).$

For $0<\eps<a$, take a maximal $(n,\eps)$-separated set $E_n$ and a
minimal subset $E_n'\subset E_n$ whose $a$-Bowen balls cover $A\cap B$.
Such a subcover exists because maximality of $E_n$ makes its closed
$\eps$-Bowen balls, and hence its open $a$-Bowen balls, cover $\OmegaT$.
Inclusion-minimality
ensures that every selected ball meets $A\cap B$; choose
$\boldsymbol y_{\boldsymbol x}\in
B_n(\boldsymbol x,a)\cap A\cap B$ for each $\boldsymbol x\in E_n'$.
Then $B_n(\boldsymbol x,a)
 \subset B_n(\boldsymbol y_{\boldsymbol x},2a),$ so every selected ball has $\rho$-measure at most
$e^{-n(h_\rho(\sigma)-\eta)}$.  Since
$\rho(A\cap B)>1/2$, the covering inequality gives $|E_n'|\geq\tfrac12 e^{n(h_\rho(\sigma)-\eta)}.$
Moreover, coupling the $j$th orbit points of $\boldsymbol x$ and
$\boldsymbol y_{\boldsymbol x}$ gives $d_{\mathrm{BL}}
 \bigl(\Delta_n(\boldsymbol x),
       \Delta_n(\boldsymbol y_{\boldsymbol x})\bigr)\leq a.$

Since $\boldsymbol y_{\boldsymbol x}\in A$, the triangle inequality yields
$d_{\mathrm{BL}}(\Delta_n(\boldsymbol x),\rho)<2a$, and therefore
$\widehat\cE(\Delta_n(\boldsymbol x))>\widehat\cE(\rho)-\eta$.
For all $n\geq\max\{N_A,N_B\}$, the contribution of $E_n'$ to the partition
sum is consequently at least $\tfrac12\exp\left(n\bigl(
 h_\rho(\sigma)+\widehat\cE(\rho)-2\eta\bigr)\right).$
Thus
\[
 \liminf_{n\to\infty}\frac1n
  \log\widehat Z_n(\widehat\cE,\eps)
 \geq h_\rho(\sigma)+\widehat\cE(\rho)-2\eta.
\]
Combining this with the choice of $\rho$ gives, for all sufficiently small
$\eps$,
\[
 \liminf_{n\to\infty}\frac1n
  \log\widehat Z_n(\widehat\cE,\eps)
 >h_\nu(\sigma)+\widehat\cE(\nu)-3\eta.
\]
Letting $\eps\to0^+$, then taking the supremum over $\nu$, and finally
letting $\eta\to0^+$ proves the lower bound.

For the upper bound, fix a path-space separation scale $\delta$ smaller than
an expansive constant.  Choose an $(n,\delta)$-separated set $E_n$ whose weight is
at least $\widehat Z_n(\widehat\cE,\delta)/2$, and put
\[
 p_{n,\boldsymbol x}
 :=\frac{\exp(n\widehat\cE(\Delta_n(\boldsymbol x)))}
          {\sum_{\boldsymbol y\in E_n}
           \exp(n\widehat\cE(\Delta_n(\boldsymbol y)))}.
\]
With
$\Xi_n:=\sum_{\boldsymbol x\in E_n}
p_{n,\boldsymbol x}\delta_{\Delta_n(\boldsymbol x)}$, the Gibbs entropy
identity is
\[
\frac1n\log\sum_{\boldsymbol x\in E_n}
  e^{n\widehat\cE(\Delta_n(\boldsymbol x))}
 =\frac1nH((p_{n,\boldsymbol x})_{\boldsymbol x\in E_n})+
 \int_{\Prob(\OmegaT)}\widehat\cE(\rho)\,d\Xi_n(\rho).
\]

Along a subsequence realizing the limsup, compactness gives a further
subsequence $\Xi_n\to\Xi$.  By
\cref{lem:empirical-distribution-entropy} and continuity of
$\widehat\cE$, while the factor $2$ in the near-maximal choice of $E_n$
contributes only $(\log2)/n$, we obtain
\begin{align*}
 \limsup_{n\to\infty}\frac1n
  \log\widehat Z_n(\widehat\cE,\delta)
 &\leq\int_{\Inv(\OmegaT,\sigma)}
   \bigl(h_\rho(\sigma)+\widehat\cE(\rho)\bigr)\,d\Xi(\rho)\\
 &\leq\sup_{\rho\in\Inv(\OmegaT,\sigma)}
   \bigl(h_\rho(\sigma)+\widehat\cE(\rho)\bigr).
\end{align*}
The upper estimate holds at every $\delta$ below an expansive constant, while
the lower estimate holds in the zero-scale limit.  Combining them gives
\[
 \underline\Pi_{\mathrm{top}}^{\cE}(T)
 =\overline\Pi_{\mathrm{top}}^{\cE}(T)
 =\sup_{\nu\in\Inv(\OmegaT,\sigma)}
   \{h_\nu(\sigma)+\widehat\cE(\nu)\}.
\]
The last supremum equals the supremum over stationary transition pairs by
\cref{cor:sup-markov}.
\end{proof}

\begin{lemma}
\label{lem:entropy-usc-positive-expansive}
Let $S:Y\to Y$ be a positively expansive continuous map of a compact metric
space.  Then the entropy map
\[
 \Inv(Y,S)\ni\lambda\longmapsto h_\lambda(S)
\]
is finite-valued, bounded, and upper semicontinuous.
\end{lemma}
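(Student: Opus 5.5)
The plan is to use a finite one-sided generating partition and write the entropy as an infimum of continuous functions of $\lambda$; an infimum of continuous functions is upper semicontinuous. Fix an expansive constant $c>0$ and a finite Borel partition $\mathcal A$ of $Y$ with $\operatorname{diam}\mathcal A<c$. As in the proof of \cref{lem:empirical-distribution-entropy}, two points with the same nonnegative $\mathcal A$-itinerary stay within distance $c$ for all times, so they coincide. Thus $\mathcal A$ is a one-sided generator for every $\lambda\in\Inv(Y,S)$. The Kolmogorov--Sinai theorem then gives
\[
h_\lambda(S)=h_\lambda(S,\mathcal A)\le H_\lambda(\mathcal A)\le\log\#\mathcal A
\]
for every $\lambda\in\Inv(Y,S)$. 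This proves finiteness and a uniform bound at once.

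For upper semicontinuity, fix $\lambda_0\in\Inv(Y,S)$ and $\eta>0$. The first step is to choose the partition adapted to $\lambda_0$: a finite partition $\mathcal A$ with $\operatorname{diam}\mathcal A<c$ and $\lambda_0(\partial A)=0$ for all $A\in\mathcal A$. It is built from finitely many balls of radius $<c/2$ whose boundary spheres are $\lambda_0$-null. Such radii exist because only countably many spheres about a given center can carry positive mass. Invariance of $\lambda_0$ together with
\[
\partial\mathcal A^q\subseteq\bigcup_{j<q}S^{-j}\partial\mathcal A
\]
then gives $\lambda_0(\partial\mathcal A^q)=0$ for every $q$. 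Next, subadditivity gives
\[
h_\lambda(S)=h_\lambda(S,\mathcal A)=\inf_q\tfrac1qH_\lambda(\mathcal A^q)
\]
for every invariant $\lambda$, since generation holds at every measure. Choose $q$ with $\tfrac1qH_{\lambda_0}(\mathcal A^q)<h_{\lambda_0}(S)+\eta$. Because the atoms of $\mathcal A^q$ are $\lambda_0$-continuity sets, the portmanteau theorem makes $\lambda\mapsto H_\lambda(\mathcal A^q)$ continuous at $\lambda_0$. Hence for $\lambda$ near $\lambda_0$,
\[
h_\lambda(S)\le\tfrac1qH_\lambda(\mathcal A^q)<h_{\lambda_0}(S)+2\eta,
\]
which is upper semicontinuity.

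The main point needing care is that the generator property must hold simultaneously for all invariant measures near $\lambda_0$, not merely $\lambda_0$-almost surely. This is why the generator is taken to be topological: the diameter bound makes the itinerary map injective on all of $Y$, so the atoms of $\bigvee_j S^{-j}\mathcal A$ are singletons. The second point needing care is the standard fact that the $\sigma$-algebra generated by an injective Borel itinerary map into a standard Borel space is the full Borel $\sigma$-algebra. This is the same Souslin-type argument already used in the proof of \cref{prop:markovization}, and I would cite it in the same way. Everything else is the standard Misiurewicz-free argument, since no block argument is needed once a single generator works uniformly.
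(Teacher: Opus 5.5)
Your proposal is correct and follows the paper's proof essentially step for step. A partition of diameter below the expansive constant is a one-sided generator for every invariant measure, which gives the uniform bound $\log\#\mathcal A$; for upper semicontinuity you choose the partition with $\lambda_0$-null boundaries, use invariance to make every atom of the joins a $\lambda_0$-continuity set, and pass to the limit in $h_\lambda(S)=\inf_q\frac1q H_\lambda(\mathcal A^q)$, exactly as the paper does.
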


\begin{proof}
Let $c>0$ be an expansive constant.  A fixed finite Borel partition
$\mathcal A_0$ with atoms of diameter smaller than $c$ is one-sided
generating, so
$h_\theta(S)\leq\log\#\mathcal A_0$ for every
$\theta\in\Inv(Y,S)$.  This proves a uniform finite bound.

To prove upper semicontinuity, fix $\lambda\in\Inv(Y,S)$ and choose a finite
Borel partition $\mathcal A$ whose
atoms have diameter smaller than $c$ and whose atom boundaries are
$\lambda$-null.  Such a partition is obtained from a finite cover by balls of
radii smaller than $c/2$, choosing the radii so that their boundary spheres
are $\lambda$-null, and then successively taking set differences.

The partition $\mathcal A$ is one-sided generating for every invariant
measure: if two points have the same $\mathcal A$-itinerary, their forward
iterates remain at distance less than $c$, so positive expansivity makes the
points equal.  The Kolmogorov--Sinai theorem therefore gives, for every
$\theta\in\Inv(Y,S)$,
\begin{equation}\label{eq:entropy-finite-generator}
 h_\theta(S)
 =h_\theta(S,\mathcal A)
 =\inf_{m\geq1}\frac1m
   H_\theta\left(\bigvee_{j=0}^{m-1}S^{-j}\mathcal A\right)
 \leq\log\#\mathcal A.
\end{equation}
Now let $\lambda_k\in\Inv(Y,S)$ converge weakly to $\lambda$.  Invariance of $\lambda$ implies that
the boundary of every atom of
$\bigvee_{j=0}^{m-1}S^{-j}\mathcal A$ is $\lambda$-null.  Consequently, for
each fixed $m$ the corresponding finite-partition entropy is continuous along
$\lambda_k\to\lambda$.  From \eqref{eq:entropy-finite-generator},
\[
 \limsup_{k\to\infty}h_{\lambda_k}(S)
 \leq\frac1m
 H_\lambda\left(\bigvee_{j=0}^{m-1}S^{-j}\mathcal A\right)
 \quad(m\geq1).
\]
Taking the infimum over $m$ proves
$\limsup_{k\to\infty}h_{\lambda_k}(S)\leq h_\lambda(S)$.
\end{proof}

\begin{theorem}[Existence of equilibrium pairs]\label{thm:existence}
Let $T$ be a forward expansive correspondence on a compact metric space, let
$\cE\in C(\Prob(\GammaT),\R)$, and assume that $(T,\cE)$ has an abundance of
ergodic path measures.  Then the set of nonlinear equilibrium pairs is
nonempty.  More precisely, applying \cref{prop:markovization} to a maximizing
invariant path measure produces an equilibrium pair.
\end{theorem}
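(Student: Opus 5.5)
The plan is to work on the path space $(\OmegaT,\sigma)$ with the lifted energy $\widehat\cE$, find a maximizer there by compactness and upper semicontinuity, and then push it down to a stationary transition pair using \cref{prop:markovization}.

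\textbf{Step 1: a maximizer exists on the path space.} Consider the functional
\[
 \Phi(\nu):=h_\nu(\sigma)+\widehat\cE(\nu)
 \qquad\text{on }\Inv(\OmegaT,\sigma).
\]
\begin{itemize}
\item The set $\Inv(\OmegaT,\sigma)$ is a nonempty, weak-star compact, convex subset of $\Prob(\OmegaT)$, since $\OmegaT$ is compact and $\sigma$ is continuous.
\item Forward expansivity of $T$ makes $\sigma$ positively expansive. By \cref{lem:entropy-usc-positive-expansive}, the map $\nu\mapsto h_\nu(\sigma)$ is therefore finite-valued, bounded and upper semicontinuous.
\item The map $\widehat\cE=\cE\circ(p_2)_*$ is continuous, because $(p_2)_*$ is weak-star continuous and $\cE$ is continuous.
\end{itemize}
Hence $\Phi$ is bounded and upper semicontinuous on a compact set. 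Take a maximizing sequence $\nu_k$ and pass to a convergent subsequence $\nu_k\to\nu_*$. Upper semicontinuity gives
\[
 \Phi(\nu_*)\geq\limsup_k\Phi(\nu_k)=\sup\Phi .
\]
So the supremum is attained at some $\nu_*$.

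\textbf{Step 2: push the maximizer down to a stationary pair.} Apply \cref{prop:markovization} to $\nu_*$. This gives $Q\in\Ker(X;T)$ and $\mu\in\Inv(X,Q)$ with
\[
 \gamma_{\mu,Q}=(p_2)_*\nu_*
 \qquad\text{and}\qquad
 h_\mu(Q)\geq h_{\nu_*}(\sigma).
\]
Therefore
\[
 h_\mu(Q)+\cE(\gamma_{\mu,Q})\geq h_{\nu_*}(\sigma)+\widehat\cE(\nu_*)=\sup_{\nu}\Phi(\nu).
\]
By \cref{cor:sup-markov}, $\sup_\nu\Phi(\nu)$ equals the supremum of the nonlinear measure pressure over all stationary transition pairs. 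The reverse inequality is trivial, so $(\mu,Q)$ attains that supremum and is a nonlinear equilibrium pair.

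\textbf{Step 3: link to the topological pressure.} Under abundance, \cref{thm:vp} identifies this common value with $\Ptop^{\cE}(T)$. The maximizer therefore realizes the topological pressure as well. This is the only place abundance enters; existence itself uses only expansivity.

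\textbf{Main obstacle.} There is no real difficulty here. The nontrivial inputs, upper semicontinuity of entropy and entropy non-decrease under Markov replacement, are exactly \cref{lem:entropy-usc-positive-expansive} and \cref{prop:markovization}. The point to handle with care is that the maximization must be done on the path space, not directly over pairs $(\mu,Q)$. The set of kernels has no convenient compact topology, and $\mu$-a.e. equivalence of kernels complicates taking limits there. Compactness of $\Inv(\OmegaT,\sigma)$ avoids this entirely.
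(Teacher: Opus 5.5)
Your proof is correct and follows the paper's argument. You maximize $h_\nu(\sigma)+\widehat\cE(\nu)$ over the compact set $\Inv(\OmegaT,\sigma)$ using \cref{lem:entropy-usc-positive-expansive}, then push the maximizer down via \cref{prop:markovization}; the only difference is that you close with \cref{cor:sup-markov} instead of \cref{thm:vp}, which correctly shows that abundance is needed only to identify the maximal value with $\Ptop^{\cE}(T)$, not for existence itself.
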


\begin{proof}
The space $\OmegaT$ is nonempty and compact and $\sigma$ is continuous, so the
Krylov--Bogolyubov argument shows that
$\Inv(\OmegaT,\sigma)$ is nonempty; it is compact because it is a closed subset
of $\Prob(\OmegaT)$.  The shift is positively expansive, and hence
\cref{lem:entropy-usc-positive-expansive} shows that
$\nu\mapsto h_\nu(\sigma)$ is upper semicontinuous.  Since
$\widehat\cE$ is continuous, the functional
\[
 \nu\longmapsto h_\nu(\sigma)+\widehat\cE(\nu)
\]
attains its maximum at some $\nu_*$.  By \cref{thm:vp}, its value is
$\Ptop^{\cE}(T)$.

Let $(\mu_*,Q_*)$ be the stationary pair obtained from $\nu_*$ by
\cref{prop:markovization}.  Then
\[
 \cE(\gamma_{\mu_*,Q_*})=\widehat\cE(\nu_*),
 \qquad h_{\mu_*}(Q_*)\geq h_{\nu_*}(\sigma).
\]
The variational identity in \cref{thm:vp} also bounds the resulting
stationary-pair value above by
$\Ptop^{\cE}(T)$.  Hence equality holds throughout, and
$(\mu_*,Q_*)$ is an equilibrium pair.
\end{proof}

\begin{theorem}[Pressure at a fixed scale]\label{thm:fixed-scale}
Let $T$ be a forward expansive correspondence on a compact metric space, let
$\cE\in C(\Prob(\GammaT),\R)$, and assume that $(T,\cE)$ has an abundance of
ergodic path measures.  Let $\eta>0$ be an expansive constant for $T$ and set $\delta_0:=\frac{\eta}{8(1+\eta)}
$ and $\eps_*:=\frac{\delta_0}{4}=\frac{\eta}{32(1+\eta)}.$
Then the following limit exists and
\begin{equation}\label{eq:fixed-scale-pressure}
 \Ptop^{\cE}(T)
 =\lim_{n\to\infty}\frac1n\log Z_n(T,\cE,\eps_*).
\end{equation}

\end{theorem}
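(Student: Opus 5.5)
The plan is to transfer the fixed-scale statement from the path space, where \cref{lem:fixed-scale-stabilization} applies, back to the correspondence using the two comparison inequalities of \cref{lem:edge-lift}.

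\emph{Step 1: the path space at a fixed scale.} If $\eta$ is an expansive constant for $T$, the discussion after the definition of forward expansivity shows that $c:=\eta/(2(1+\eta))$ is an expansive constant for $\sigma$ on $(\OmegaT,d_\omega)$. Then $\delta_0=\eta/(8(1+\eta))=c/4<c/2$. By \cref{thm:vp} and \cref{lem:edge-lift}, the upper and lower nonlinear pressures of $(\OmegaT,\sigma,\widehat\cE)$ both equal $\Ptop^{\cE}(T)$. Hence \cref{lem:fixed-scale-stabilization} gives
\[
\lim_{n\to\infty}\frac1n\log\widehat Z_n(\widehat\cE,\delta)=\Ptop^{\cE}(T)
\]
for every $0<\delta<c/2$. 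In particular this holds at $\delta_0$, and also at the smaller scale $\eps_*/(4(1+\eps_*))$.

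\emph{Step 2: lower bound at scale $\eps_*$.} Apply \eqref{eq:path-to-edge-estimate} with $\delta=\delta_0$. This gives $a=\delta_0/4=\eps_*$ and a fixed $r=r(\delta_0)$, so that
\[
\widehat Z_n(\widehat\cE,\delta_0)\le \exp\bigl(n\omega_\cE(2r/(n+r))+rM_\cE\bigr)\,Z_{n+r}(T,\cE,\eps_*).
\]
Take logarithms, divide by $n$, and use that $r$ is fixed and $\omega_\cE(t)\to0$. The index shift $n\mapsto n+r$ does not affect normalized limits, so
\[
\liminf_{m\to\infty}\frac1m\log Z_m(T,\cE,\eps_*)\ge\Ptop^{\cE}(T).
\]

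\emph{Step 3: upper bound.} The upper bound is simpler. Monotonicity in the separation scale gives $Z_n(T,\cE,\eps_*)\le Z_n(T,\cE,\eps)$ for $\eps<\eps_*$, so $\limsup_n\frac1n\log Z_n(T,\cE,\eps_*)\le\overline\Pi^{\cE}_{\mathrm{top}}(T)=\Ptop^{\cE}(T)$. Alternatively, \eqref{eq:edge-to-path-estimate} bounds $Z_n(T,\cE,\eps_*)$ by $\widehat Z_n$ at scale $\eps_*/(4(1+\eps_*))<c/2$, which is controlled by Step 1. Combining the two bounds shows that the limit exists and equals $\Ptop^{\cE}(T)$.

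The only delicate point is the constant bookkeeping. We need $\delta_0<c/2$ for the shift's expansive constant, and the outputs of \cref{lem:edge-lift}(ii) must land exactly at $\eps_*$; we should also check $\delta_0<1$, which is needed for $r(\delta)$ in that lemma and holds here. All of these are direct verifications with the stated formulas.
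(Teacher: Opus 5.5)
Your proposal is correct and follows the paper's proof essentially step for step. You use the path-space fixed-scale limit at $\delta_0=c_\omega/4<c_\omega/2$ from \cref{lem:fixed-scale-stabilization}, obtain the lower bound from \cref{lem:edge-lift}(ii) with $a=\delta_0/4=\eps_*$, and for the upper bound either your monotonicity-in-scale argument against $\overline\Pi^{\cE}_{\mathrm{top}}(T)=\Ptop^{\cE}(T)$ or your alternative via \cref{lem:edge-lift}(i), which is exactly the paper's route; the monotonicity argument is equally valid and slightly more direct.
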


\begin{proof}
The calculation following the definition of forward expansivity shows that $c_\omega:=\frac{\eta}{2(1+\eta)}$ is an expansive constant for $(\OmegaT,\sigma)$ \cite[Proposition~6.3]{li-li-zhang-correspondences}.  Since
$\delta_0=c_\omega/4<c_\omega/2$, we may apply
\cref{lem:fixed-scale-stabilization} to the path-space
variational principle in \cref{thm:vp} to obtain
\begin{equation}\label{eq:path-fixed-scale}
 \Ptop^{\cE}(T)
 =\lim_{n\to\infty}\frac1n
   \log\widehat Z_n(\widehat\cE,\delta_0).
\end{equation}
The same argument applies at every smaller positive scale.

Apply \cref{lem:edge-lift}(ii) with $\delta=\delta_0$.
The normalized error
$\omega_{\cE}(2r/(n+r))+rM_{\cE}/n$ tends to zero as $n\to\infty$,
and $a=\delta_0/4=\eps_*$. Thus equations
\eqref{eq:path-fixed-scale} and \eqref{eq:path-to-edge-estimate} imply
\[
 \Ptop^{\cE}(T)
 \leq\liminf_{n\to\infty}\frac1n
      \log Z_n(T,\cE,\eps_*).
\]
On the other hand, \cref{lem:edge-lift}(i) gives
\[
 \limsup_{n\to\infty}\frac1n\log Z_n(T,\cE,\eps_*)
 \leq
 \limsup_{n\to\infty}\frac1n
 \log\widehat Z_n\left(\widehat\cE,
       \frac{\eps_*}{4(1+\eps_*)}\right)
 =\Ptop^{\cE}(T).
\]
The liminf and limsup are equal, proving
\eqref{eq:fixed-scale-pressure}.
\end{proof}

\section{Nonlinear Gibbs ensembles and their projections}\label{sec:gibbs}

For a nonempty $(n,\eps)$-separated set
$\cC\subset\cO_{n+1}(T)$, write
\begin{equation*}
 W_n^{\cE}(\cC)
 :=\sum_{\boldsymbol x\in\cC}
   \exp\bigl(n\cE(L_n(\boldsymbol x))\bigr).
\end{equation*}
The corresponding weighted average of the empirical measures $L_n$ is
the probability measure on $\GammaT$ given by
\begin{equation*}
 \Gamma_{\cC}
 :=\frac{1}{W_n^{\cE}(\cC)}
   \sum_{\boldsymbol x\in\cC}
   \exp\bigl(n\cE(L_n(\boldsymbol x))\bigr)L_n(\boldsymbol x).
\end{equation*}
Its first marginal is the probability measure on $X$ given by
\begin{equation*}
 m_{\cC}
 :=\frac{1}{W_n^{\cE}(\cC)}
   \sum_{\boldsymbol x\in\cC}
   \exp\bigl(n\cE(L_n(\boldsymbol x))\bigr)
   \left(\frac1n\sum_{j=0}^{n-1}\delta_{x_j}\right).
\end{equation*}
These measures are the two-coordinate and first-coordinate projections,
respectively, of a nonlinear Gibbs ensemble on $(\OmegaT,\sigma)$.
The ensemble is constructed by the weighted empirical-measure formula in
\cite[Equation~(1.5)]{buzzi-kloeckner-leplaideur-nonlinear}, after extending
each finite orbit to an infinite orbit; here the normalizing factor is the
actual weight $W_n^{\cE}(\cC)$, whether or not $\cC$ maximizes the partition
sum.  The construction and the projection identity are written out in
\eqref{eq:path-gibbs-ensemble}--\eqref{eq:gibbs-projection-identity} below.
Neither $\Gamma_{\cC}$ nor $m_{\cC}$ depends on these extensions.
The two marginals of $\Gamma_{\cC}$ need not agree: their difference consists
of the initial and final endpoint contributions divided by $n$.  Every
accumulation point as $n\to\infty$ nevertheless has equal marginals, since
\begin{equation*}
 \left\|(\proj_1)_*\Gamma_{\cC}-(\proj_2)_*\Gamma_{\cC}\right\|_{\mathrm{TV}}
 \leq\frac{2}{n}.
\end{equation*}

To describe the limits, introduce the following sets.  We denote by $\mathsf{EM}_{\mathrm{path}}(T,\cE)$ the set of
path-space equilibrium measures; that is, the invariant probability
measures on $\OmegaT$ that maximize the sum of entropy and lifted energy:
\begin{equation*}
\begin{split}
\mathsf{EM}_{\mathrm{path}}(T,\cE)
:=\Bigl\{\nu\in\Inv(\OmegaT,\sigma):\;&
h_\nu(\sigma)+\widehat\cE(\nu)\\
&=\sup_{\rho\in\Inv(\OmegaT,\sigma)}
\bigl(h_\rho(\sigma)+\widehat\cE(\rho)\bigr)
\Bigr\}.
\end{split}
\end{equation*}

We also denote by $\mathsf{J}_{\mathrm{eq}}(T,\cE)$ the set of
two-coordinate distributions induced by nonlinear equilibrium pairs:
\begin{equation*}
\mathsf{J}_{\mathrm{eq}}(T,\cE)
:=
\left\{
\gamma_{\mu,Q}:
(\mu,Q)\text{ is a nonlinear equilibrium pair}
\right\}.
\end{equation*}
\begin{proposition}\label{prop:equilibrium-edge-laws}
Let $T$ be a forward expansive correspondence on a compact metric space, let
$\cE\in C(\Prob(\GammaT),\R)$, and assume that $(T,\cE)$ has an abundance of
ergodic path measures.  Then
\begin{equation}\label{eq:path-edge-equivalence}
 \mathsf{J}_{\mathrm{eq}}(T,\cE)
 =\{(p_2)_*\nu:\nu\in
       \mathsf{EM}_{\mathrm{path}}(T,\cE)\}.
\end{equation}
In particular, $\mathsf{J}_{\mathrm{eq}}(T,\cE)$ is a nonempty compact subset of
$\Prob(\GammaT)$.
\end{proposition}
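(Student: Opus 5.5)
The plan is to prove the two inclusions in \eqref{eq:path-edge-equivalence} separately, using \cref{prop:markovization} and \cref{cor:sup-markov}, together with the identity $\Ptop^{\cE}(T)=\sup_\nu\{h_\nu(\sigma)+\widehat\cE(\nu)\}$ from \cref{thm:vp}. Write $P^*$ for this common supremum. By \cref{cor:sup-markov}, $P^*$ is also the supremum of $h_\mu(Q)+\cE(\gamma_{\mu,Q})$ over stationary transition pairs.

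For ``$\subseteq$'', let $(\mu,Q)$ be a nonlinear equilibrium pair. The converse part of \cref{prop:markovization} gives the Markov measure $\nu_{\mu,Q}$ with $(p_2)_*\nu_{\mu,Q}=\gamma_{\mu,Q}$ and $h_{\nu_{\mu,Q}}(\sigma)=h_\mu(Q)$. It follows that
\[
 h_{\nu_{\mu,Q}}(\sigma)+\widehat\cE(\nu_{\mu,Q})=h_\mu(Q)+\cE(\gamma_{\mu,Q})=P^*,
\]
so $\nu_{\mu,Q}\in\mathsf{EM}_{\mathrm{path}}(T,\cE)$.

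For ``$\supseteq$'', let $\nu\in\mathsf{EM}_{\mathrm{path}}(T,\cE)$. \cref{prop:markovization} produces $(\mu,Q)$ with $\gamma_{\mu,Q}=(p_2)_*\nu$ and $h_\mu(Q)\geq h_\nu(\sigma)$. Then
\[
 h_\mu(Q)+\cE(\gamma_{\mu,Q})\geq h_\nu(\sigma)+\widehat\cE(\nu)=P^*.
\]
The reverse inequality holds by \cref{cor:sup-markov}, so $(\mu,Q)$ is an equilibrium pair and $(p_2)_*\nu\in\mathsf{J}_{\mathrm{eq}}(T,\cE)$. This is the same argument as in \cref{thm:existence}.

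Nonemptiness follows from \cref{thm:existence}, or equivalently from the existence of a maximizer $\nu_*$ shown in its proof. For compactness, I will show that $\mathsf{EM}_{\mathrm{path}}(T,\cE)$ is closed in the compact set $\Inv(\OmegaT,\sigma)$. Let $\nu_k\in\mathsf{EM}_{\mathrm{path}}$ with $\nu_k\to\nu$. By upper semicontinuity of entropy (\cref{lem:entropy-usc-positive-expansive}, which applies since the shift is positively expansive) and continuity of $\widehat\cE$,
\[
 h_\nu(\sigma)+\widehat\cE(\nu)\geq\limsup_k\bigl(h_{\nu_k}(\sigma)+\widehat\cE(\nu_k)\bigr)=P^*.
\]
Since $P^*$ is the supremum, equality holds and $\nu$ is a maximizer. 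Here $P^*$ is finite because entropy is bounded and $\cE$ is bounded. Hence $\mathsf{EM}_{\mathrm{path}}$ is compact. The map $\nu\mapsto(p_2)_*\nu$ is weak-star continuous, because $p_2$ is continuous and so $f\circ p_2\in C(\OmegaT)$ for $f\in C(\GammaT)$. Therefore $\mathsf{J}_{\mathrm{eq}}(T,\cE)$ is the continuous image of a compact set and is compact.

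The only delicate point is the closedness of $\mathsf{EM}_{\mathrm{path}}$, which needs upper semicontinuity of entropy. That is supplied by \cref{lem:entropy-usc-positive-expansive}, so I do not expect a real obstacle; the rest is bookkeeping with \cref{prop:markovization}.
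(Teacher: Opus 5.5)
Your proposal is correct and follows essentially the same route as the paper: \cref{prop:markovization}, together with the equality of the path-space and stationary-pair suprema, gives both inclusions, and compactness follows from upper semicontinuity of entropy and continuity of $(p_2)_*$. The only additions are details the paper leaves implicit, namely the explicit closedness argument for $\mathsf{EM}_{\mathrm{path}}(T,\cE)$ and the citation of \cref{thm:existence} for nonemptiness.
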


\begin{proof}
If $\nu$ is a maximizing path measure, the stationary pair $(\mu,Q)$
obtained from its two-coordinate marginal has no smaller entropy by
\cref{prop:markovization}.  Since the variational values agree by
\cref{thm:vp}, the entropy cannot increase strictly; $(\mu,Q)$ is an
equilibrium pair and $(p_2)_*\nu=\gamma_{\mu,Q}$.

Conversely, an equilibrium pair $(\mu,Q)$ induces the Markov path measure
$\nu_{\mu,Q}$.  By \eqref{eq:markov-converse}, its path entropy and energy
are equal to the entropy and energy of the pair, so it is a maximizing path
measure.  This proves \eqref{eq:path-edge-equivalence}.  The path equilibrium
set is compact because entropy is upper semicontinuous and the lifted energy is
continuous.  Compactness of $\mathsf{J}_{\mathrm{eq}}(T,\cE)$ follows from continuity of
$(p_2)_*$.
\end{proof}

\begin{theorem}[Limits of projected Gibbs ensembles]\label{thm:gibbs-ensembles}
Let $T$ be a forward expansive correspondence on a compact metric space, let
$\cE\in C(\Prob(\GammaT),\R)$, and assume that $(T,\cE)$ has an abundance of
ergodic path measures.  Fix the scale $\eps_*$ from
\cref{thm:fixed-scale}.  Let $n_k\to\infty$, and for each $k$ let
$\cC_k\subset\cO_{n_k+1}(T)$ be $(n_k,\eps_*)$-separated and satisfy
\begin{equation}\label{eq:asymptotically-adapted}
 \lim_{k\to\infty}\frac1{n_k}
       \log W_{n_k}^{\cE}(\cC_k)
 =\Ptop^{\cE}(T).
\end{equation}
Then every accumulation point $\Gamma_\infty$ of
$(\Gamma_{\cC_k})_k$ is an average of the two-coordinate distributions of
equilibrium pairs: there exists
$\tau\in\Prob(\mathsf{J}_{\mathrm{eq}}(T,\cE))$ such that
\begin{equation}\label{eq:edge-barycenter}
 \Gamma_\infty=\int_{\mathsf{J}_{\mathrm{eq}}(T,\cE)}\gamma\,d\tau(\gamma).
\end{equation}
Consequently, every accumulation point $m_\infty$ of the first marginals
$(m_{\cC_k})_k$ has the form
\begin{equation}\label{eq:vertex-barycenter}
 m_\infty
 =\int_{\mathsf{J}_{\mathrm{eq}}(T,\cE)}(\proj_1)_*\gamma\,d\tau(\gamma)
\end{equation}
for a probability measure $\tau$ supported on two-coordinate distributions of equilibrium pairs.
\end{theorem}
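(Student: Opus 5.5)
Lift the problem to the path space and read off the two-coordinate projections at the end. For each $\boldsymbol x\in\cC_k$ choose an extension in $\OmegaT$ (still denoted $\boldsymbol x$). By the computation in \cref{lem:edge-lift}(i), the extended set $E_k$ is $(n_k,\delta)$-separated for $(\OmegaT,d_{\omega,n_k})$ with $\delta=\eps_*/(4(1+\eps_*))$, which lies below the expansive constant of the shift. Put
\[
 p_{k,\boldsymbol x}:=\frac{\exp(n_k\widehat\cE(\Delta_{n_k}(\boldsymbol x)))}{W^{\cE}_{n_k}(\cC_k)},
 \qquad
 \Xi_k:=\sum_{\boldsymbol x\in E_k}p_{k,\boldsymbol x}\,\delta_{\Delta_{n_k}(\boldsymbol x)}\in\Prob(\Prob(\OmegaT)).
\]
By \eqref{eq:empirical-compatibility} the weights agree with those defining $\Gamma_{\cC_k}$, and $\Gamma_{\cC_k}=(p_2)_*\bar\nu_k$, where $\bar\nu_k=\int\rho\,d\Xi_k(\rho)$. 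Fix an accumulation point $\Gamma_\infty$ and pass to a subsequence along which $\Gamma_{\cC_k}\to\Gamma_\infty$. By compactness, pass to a further subsequence with $\Xi_k\to\Xi$. The barycenter map and $(p_2)_*$ are continuous, so $\Gamma_\infty=(p_2)_*\int\rho\,d\Xi(\rho)$.

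Next use the Gibbs entropy identity from the proof of \cref{thm:vp}:
\[
 \frac1{n_k}\log W^{\cE}_{n_k}(\cC_k)=\frac1{n_k}H(p_k)+\int\widehat\cE\,d\Xi_k .
\]
Take limits using \eqref{eq:asymptotically-adapted}, continuity of $\widehat\cE$, and \cref{lem:empirical-distribution-entropy}. This gives that $\Xi$ is supported on $\Inv(\OmegaT,\sigma)$ and that
\[
 \Ptop^{\cE}(T)\le\int\bigl(h_\rho(\sigma)+\widehat\cE(\rho)\bigr)\,d\Xi(\rho).
\]
By \cref{thm:vp} and \cref{cor:sup-markov}, the integrand is pointwise at most $\Ptop^{\cE}(T)$. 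Hence equality holds $\Xi$-a.e., so $\Xi$ is concentrated on $\mathsf{EM}_{\mathrm{path}}(T,\cE)$. That set is closed by upper semicontinuity of entropy (\cref{lem:entropy-usc-positive-expansive}), so the support of $\Xi$ lies in it. One point needs care here: the integrand is only upper semicontinuous, not continuous, so it is Borel and bounded (by \cref{lem:entropy-usc-positive-expansive}), and the integral makes sense. Because the inequality obtained is an a.e. statement, no continuity is needed.

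Finally, set $\tau:=((p_2)_*)_*\Xi$, a probability measure on $\Prob(\GammaT)$. By \cref{prop:equilibrium-edge-laws}, $\tau$ is supported on the compact set $\mathsf{J}_{\mathrm{eq}}(T,\cE)$. Linearity and continuity of $(p_2)_*$ commute it with barycenters, which yields \eqref{eq:edge-barycenter}. For \eqref{eq:vertex-barycenter}, take an accumulation point $m_\infty$ and pass to a subsequence along which $\Gamma_{\cC_k}$ also converges; this is possible by compactness. Since $m_{\cC_k}=(\proj_1)_*\Gamma_{\cC_k}$, continuity of $(\proj_1)_*$ together with its commuting with barycenters gives the claim.

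The main obstacle is the application of \cref{lem:empirical-distribution-entropy} in the second step. That lemma needs the separation scale on the path space to be below an expansive constant of the shift. One must check that $\eps_*/(4(1+\eps_*))<c_\omega$ with the constants of \cref{thm:fixed-scale}; this holds since $\eps_*<c_\omega$. One must also check that the extension step preserves both the separation and the weights, which \cref{lem:edge-lift}(i) provides. The fixed scale itself is what makes the hypothesis \eqref{eq:asymptotically-adapted} meaningful, via \cref{thm:fixed-scale}.
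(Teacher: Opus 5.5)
Your proposal is correct and follows the paper's proof essentially step for step. You lift $\cC_k$ to $(n_k,\eps_*/(4(1+\eps_*)))$-separated path sets via \cref{lem:edge-lift}(i) and combine the Gibbs entropy identity with \cref{lem:empirical-distribution-entropy}. You then use the pointwise variational bound and upper semicontinuity to concentrate $\Xi$ on $\mathsf{EM}_{\mathrm{path}}(T,\cE)$, and finish by pushing forward under $(p_2)_*$ and $\proj_1$ with \cref{prop:equilibrium-edge-laws}; your check that this scale lies below the path expansive constant $c_\omega$ is the same one the paper makes.
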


\begin{proof}
For each $\boldsymbol x\in\cC_k$, choose an arbitrary infinite extension
$e_k(\boldsymbol x)\in\OmegaT$ and set
$\widehat\cC_k:=e_k(\cC_k)$.  Distinct finite orbits have distinct extensions,
so $e_k$ is injective.  Define the path Gibbs ensemble
\begin{equation}\label{eq:path-gibbs-ensemble}
 \nu_{\cC_k}
 :=\frac{1}{W_{n_k}^{\cE}(\cC_k)}
   \sum_{\boldsymbol x\in\cC_k}
   e^{n_k\cE(L_{n_k}(\boldsymbol x))}
   \Delta_{n_k}(e_k(\boldsymbol x)).
\end{equation}
The identity \eqref{eq:empirical-compatibility} gives
\begin{equation}\label{eq:gibbs-projection-identity}
 (p_2)_*\nu_{\cC_k}=\Gamma_{\cC_k}.
\end{equation}
It also gives equality of the finite-orbit and path-space weights:
\begin{equation}\label{eq:gibbs-weight-compatibility}
 W_{n_k}^{\cE}(\cC_k)
 =\sum_{\boldsymbol z\in\widehat\cC_k}
   \exp\bigl(n_k\widehat\cE(\Delta_{n_k}(\boldsymbol z))\bigr).
\end{equation}

By \cref{lem:edge-lift}(i), $\widehat\cC_k$ is an
$(n_k,\delta_*)$-separated subset of $\OmegaT$, where $\delta_*:=\frac{\eps_*}{4(1+\eps_*)}.$ With the notation used in \cref{thm:fixed-scale},
$c_\omega:=\eta/(2(1+\eta))$ is a path expansivity constant and
$\eps_*=c_\omega/16$.  Hence
$\delta_*<\eps_*<c_\omega/2$.  The variational principle
\cref{thm:vp} and
\cref{lem:fixed-scale-stabilization} therefore give the full fixed-scale limit
\[
 \lim_{n\to\infty}\frac1n
 \log\widehat Z_n(\widehat\cE,\delta_*)
 =\Ptop^{\cE}(T).
\]
Together with \eqref{eq:gibbs-weight-compatibility}, this shows that
\eqref{eq:asymptotically-adapted} is precisely the asymptotic optimality
condition for the separated path sets $\widehat\cC_k$.  Write
\[
 p_{k,\boldsymbol x}
 :=\frac{e^{n_k\cE(L_{n_k}(\boldsymbol x))}}
         {W_{n_k}^{\cE}(\cC_k)},
 \qquad
 \Xi_k:=\sum_{\boldsymbol x\in\cC_k}
 p_{k,\boldsymbol x}
 \delta_{\Delta_{n_k}(e_k(\boldsymbol x))}.
\]
Then $\nu_{\cC_k}$ is the barycenter of $\Xi_k$.  Pass to a subsequence along
which $\Gamma_{\cC_k}\to\Gamma_\infty$ and, by compactness, to a further
subsequence such that $\Xi_k\to\Xi$ in
$\Prob(\Prob(\OmegaT))$.  The Gibbs entropy identity gives
\[
 \frac1{n_k}\log W_{n_k}^{\cE}(\cC_k)
 =\frac1{n_k}H((p_{k,\boldsymbol x})_{\boldsymbol x\in\cC_k})
  +\int\widehat\cE(\rho)\,d\Xi_k(\rho).
\]
The limit measure $\Xi$ is supported on $\Inv(\OmegaT,\sigma)$ by
\cref{lem:empirical-distribution-entropy}.  That lemma, continuity of
$\widehat\cE$, the preceding identity, and
\eqref{eq:asymptotically-adapted} give
\begin{align*}
 \Ptop^{\cE}(T)
 &=\lim_{k\to\infty}\frac1{n_k}
      \log W_{n_k}^{\cE}(\cC_k)\\
 &\leq\int_{\Inv(\OmegaT,\sigma)}
   \bigl(h_\rho(\sigma)+\widehat\cE(\rho)\bigr)\,d\Xi(\rho).
\end{align*}
On the other hand, the variational identity in \cref{thm:vp} gives the
pointwise bound
\[
 h_\rho(\sigma)+\widehat\cE(\rho)
 \leq\Ptop^{\cE}(T)
 \qquad(\rho\in\Inv(\OmegaT,\sigma)).
\]
The integral is consequently equal to $\Ptop^{\cE}(T)$.  The nonnegative
Borel function
\[
 G(\rho):=\Ptop^{\cE}(T)-h_\rho(\sigma)-\widehat\cE(\rho)
\]
therefore has zero $\Xi$-integral.  Hence $G=0$ for $\Xi$-almost every
$\rho$.  Since $h_\rho(\sigma)+\widehat\cE(\rho)$ is upper semicontinuous by
\cref{lem:entropy-usc-positive-expansive}, its maximizing set
$\mathsf{EM}_{\mathrm{path}}(T,\cE)=\{G=0\}$ is closed.  Thus $\Xi$ is
concentrated on, and its topological support is contained in,
$\mathsf{EM}_{\mathrm{path}}(T,\cE)$.

The barycenter map on $\Prob(\Prob(\OmegaT))$ is continuous: for
$f\in C(\OmegaT)$, evaluation against $f$ is the continuous function
$\rho\mapsto\int f\,d\rho$.  It follows that $\nu_\infty
 =\int_{\mathsf{EM}_{\mathrm{path}}(T,\cE)}\rho\,d\Xi(\rho)$ is the limit of the path Gibbs ensembles $\nu_{\cC_k}$.  By
\cref{prop:equilibrium-edge-laws}, the image of $\Xi$ under
$\rho\mapsto(p_2)_*\rho$ is a probability measure $\tau$ on
$\mathsf{J}_{\mathrm{eq}}(T,\cE)$, and \eqref{eq:gibbs-projection-identity} together with
$\Gamma_{\cC_k}\to\Gamma_\infty$ gives \eqref{eq:edge-barycenter}.

Finally, let $m_\infty$ be any accumulation point of $(m_{\cC_k})_k$ and
pass to a subsequence on which $m_{\cC_k}\to m_\infty$.  Compactness of
$\Prob(\GammaT)$ provides a further subsequence with
$\Gamma_{\cC_k}\to\Gamma_\infty$.  Since
$m_{\cC_k}=(\proj_1)_*\Gamma_{\cC_k}$, continuity of projection gives
$m_\infty=(\proj_1)_*\Gamma_\infty$.  Applying $\proj_1$ to
\eqref{eq:edge-barycenter} proves \eqref{eq:vertex-barycenter}.
\end{proof}

\begin{corollary}\label{cor:gibbs-unique}
Assume the hypotheses and notation of \cref{thm:gibbs-ensembles}.  If
$\mathsf{J}_{\mathrm{eq}}(T,\cE)=\{\gamma_*\}$, then every sequence satisfying
\eqref{eq:asymptotically-adapted} obeys
\begin{equation*}
 \Gamma_{\cC_k}\xrightarrow{\weakstar}\gamma_*,
 \qquad
 m_{\cC_k}\xrightarrow{\weakstar}(\proj_1)_*\gamma_*.
\end{equation*}
If there are finitely many two-coordinate distributions of equilibrium pairs, every accumulation point of $(\Gamma_{\cC_k})_k$ lies in their convex hull.
\end{corollary}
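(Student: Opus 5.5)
This is a direct consequence of \cref{thm:gibbs-ensembles} combined with compactness of $\Prob(\GammaT)$; the plan is the usual subsequence argument.

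For the first assertion, I suppose $\mathsf{J}_{\mathrm{eq}}(T,\cE)=\{\gamma_*\}$. By \cref{thm:gibbs-ensembles}, every accumulation point $\Gamma_\infty$ of $(\Gamma_{\cC_k})_k$ has the form $\int\gamma\,d\tau(\gamma)$ for some $\tau\in\Prob(\{\gamma_*\})$. Then $\tau=\delta_{\gamma_*}$, and so $\Gamma_\infty=\gamma_*$. The space $\Prob(\GammaT)$ is compact and metrizable, for instance by $d_{\mathrm{BL}}$. A sequence in a compact metric space whose accumulation points all equal $\gamma_*$ converges to $\gamma_*$: otherwise some subsequence stays a fixed distance away from $\gamma_*$, and by compactness that subsequence has a further convergent subsequence, whose limit would be an accumulation point different from $\gamma_*$. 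Hence $\Gamma_{\cC_k}\to\gamma_*$. Since $m_{\cC_k}=(\proj_1)_*\Gamma_{\cC_k}$ and pushforward by the continuous map $\proj_1$ is weak-star continuous, it follows that $m_{\cC_k}\to(\proj_1)_*\gamma_*$.

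For the second assertion, I suppose $\mathsf{J}_{\mathrm{eq}}(T,\cE)=\{\gamma_1,\ldots,\gamma_r\}$ is finite. Any $\tau\in\Prob(\mathsf{J}_{\mathrm{eq}})$ is then $\sum_i t_i\delta_{\gamma_i}$ with $t_i\ge0$ and $\sum_i t_i=1$. By the definition of the barycenter, the representation \eqref{eq:edge-barycenter} reads $\Gamma_\infty=\sum_i t_i\gamma_i$, which is a point of the convex hull.

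There is no real obstacle here. The only point needing care is the passage from ``every accumulation point equals $\gamma_*$'' to actual convergence, and that is exactly the compactness argument above. Weak-star convergence of the first marginals then follows from the convergence of $\Gamma_{\cC_k}$ rather than from a separate argument.
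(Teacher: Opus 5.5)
Your proposal is correct and follows essentially the same route as the paper: it uses \cref{thm:gibbs-ensembles} to force every accumulation point to equal $\gamma_*$, then compactness of $\Prob(\GammaT)$ to get convergence of the whole sequence, and pushforward under $\proj_1$ for the first marginals. The finite case is, as in the paper, read off directly from the barycenter representation \eqref{eq:edge-barycenter}.
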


\begin{proof}
In the unique case, \cref{thm:gibbs-ensembles} forces every accumulation point
to equal $\gamma_*$.  Compactness of $\Prob(\GammaT)$ then gives convergence of
the whole sequence; convergence of the first marginals follows by projection.  The final
statement is immediate from \eqref{eq:edge-barycenter}.
\end{proof}

\begin{remark}
Even though every $\Gamma_\infty$ in \eqref{eq:edge-barycenter} has equal
marginals and therefore disintegrates into a stationary transition pair, that
pair need not be an equilibrium pair.  Indeed, the nonlinear energy of a
barycenter need not equal the barycenter of the energies.  This is the
correspondence analogue of the multiple-equilibrium phenomenon in the
Curie--Weiss model.
\end{remark}

\section{Energies with finitely many two-coordinate potentials}\label{sec:legendre}

\subsection{Rotation sets and localized entropy}

Let
$\boldsymbol\psi=(\psi_1,\ldots,\psi_d)\in C(\GammaT,\R^d)$ and let
$F:U\to\R$ be continuous on an open neighborhood of the convex hull of
$\boldsymbol\psi(\GammaT)$.  This finite-dimensional viewpoint is also closely
related to the rotation-set interpretation of nonlinear thermodynamic formalism
developed in~\cite{kucherenko-rotation-nonlinear}.  Consider
\begin{equation*}
 \cE_{F,\boldsymbol\psi}(\gamma)
 :=F\left(\int_{\GammaT}\boldsymbol\psi\,d\gamma\right).
\end{equation*}
We abbreviate
$\Ptop^{F,\boldsymbol\psi}(T):=\Ptop^{\cE_{F,\boldsymbol\psi}}(T)$.
Define the rotation set
\begin{equation*}
 \mathcal R(\boldsymbol\psi)
 :=\left\{
   \int\boldsymbol\psi\,d\gamma_{\mu,Q}:
   Q\in\Ker(X;T),\ \mu\in\Inv(X,Q)
 \right\}
\end{equation*}
and, for $\boldsymbol z\in\mathcal R(\boldsymbol\psi)$, the localized kernel entropy
\begin{equation*}
 \mathfrak h(\boldsymbol z)
 :=\sup\left\{h_\mu(Q):
   \int_{\GammaT}\boldsymbol\psi\,d\gamma_{\mu,Q}=\boldsymbol z
 \right\},
\end{equation*}
where the supremum ranges over stationary transition pairs.
The vector $\int_{\GammaT}\boldsymbol\psi\,d\gamma_{\mu,Q}$ is the
\emph{rotation vector} of the pair. Rotation vectors of nonlinear equilibrium
pairs are called \emph{equilibrium values}.

\begin{proposition}[Finite-dimensional reduction]
\label{prop:finite-reduction}
Let $T$ be a forward expansive correspondence on a compact metric space, let
$\boldsymbol\psi\in C(\GammaT,\R^d)$, and let $F$ be continuous on an open
neighborhood of the convex hull of $\boldsymbol\psi(\GammaT)$.  Assume that
$(T,\cE_{F,\boldsymbol\psi})$ has an abundance of ergodic path measures.  Then
\begin{equation}\label{eq:finite-reduction}
 \Ptop^{F,\boldsymbol\psi}(T)
 =\max_{\boldsymbol z\in\mathcal R(\boldsymbol\psi)}
  \bigl\{\mathfrak h(\boldsymbol z)+F(\boldsymbol z)\bigr\}.
\end{equation}
\end{proposition}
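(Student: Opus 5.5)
The plan is to regroup the supremum in \cref{thm:vp} according to rotation vectors, and then to show that the resulting supremum over $\mathcal R(\boldsymbol\psi)$ is attained. First, since $\cE_{F,\boldsymbol\psi}(\gamma_{\mu,Q})=F(\boldsymbol z)$ whenever $\int\boldsymbol\psi\,d\gamma_{\mu,Q}=\boldsymbol z$, the energy of a pair depends only on its rotation vector. Splitting the supremum in \eqref{eq:vp} by rotation vector therefore gives
\[
 \Ptop^{F,\boldsymbol\psi}(T)
 =\sup_{\boldsymbol z\in\mathcal R(\boldsymbol\psi)}
 \sup\Bigl\{h_\mu(Q)+F(\boldsymbol z):\int\boldsymbol\psi\,d\gamma_{\mu,Q}=\boldsymbol z\Bigr\}
 =\sup_{\boldsymbol z\in\mathcal R(\boldsymbol\psi)}\bigl\{\mathfrak h(\boldsymbol z)+F(\boldsymbol z)\bigr\}.
\]
This equality of suprema uses only \cref{thm:vp} together with the abundance hypothesis. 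It remains to replace $\sup$ by $\max$.

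For attainment I would not try to prove upper semicontinuity of $\mathfrak h$ directly. Instead I would use \cref{thm:existence}, which supplies a nonlinear equilibrium pair $(\mu_*,Q_*)$. Set $\boldsymbol z_*:=\int\boldsymbol\psi\,d\gamma_{\mu_*,Q_*}\in\mathcal R(\boldsymbol\psi)$. Since $(\mu_*,Q_*)$ is admissible in the supremum defining $\mathfrak h(\boldsymbol z_*)$,
\[
 \mathfrak h(\boldsymbol z_*)+F(\boldsymbol z_*)\geq h_{\mu_*}(Q_*)+F(\boldsymbol z_*)=\Ptop^{F,\boldsymbol\psi}(T).
\]
The reverse inequality $\mathfrak h(\boldsymbol z_*)+F(\boldsymbol z_*)\le \Ptop^{F,\boldsymbol\psi}(T)$ is part of the first step, so $\boldsymbol z_*$ attains the maximum in \eqref{eq:finite-reduction}. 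As a by-product, the supremum defining $\mathfrak h(\boldsymbol z_*)$ is attained by $(\mu_*,Q_*)$.

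The main point requiring care is that all quantities are finite and well defined. $F$ must be defined at every point of $\mathcal R(\boldsymbol\psi)$; this holds because each rotation vector is an average of $\boldsymbol\psi$ against a probability measure on $\GammaT$, so it lies in the closed convex hull of $\boldsymbol\psi(\GammaT)$, which is contained in $U$. Entropies are finite because $h_\mu(Q)=h_{\nu_{\mu,Q}}(\sigma)$ by \cref{prop:markovization}, and this is bounded by \cref{lem:entropy-usc-positive-expansive}; hence $\mathfrak h$ is finite on $\mathcal R(\boldsymbol\psi)$. No compactness of $\mathcal R(\boldsymbol\psi)$ or semicontinuity of $\mathfrak h$ is then needed, since attainment is inherited from the existence theorem. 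I would nonetheless note that $\mathcal R(\boldsymbol\psi)$ is convex and compact, being the image under $\gamma\mapsto\int\boldsymbol\psi\,d\gamma$ of the set of $(p_2)_*\nu$ with $\nu\in\Inv(\OmegaT,\sigma)$.
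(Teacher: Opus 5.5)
Your proposal is correct and follows essentially the same route as the paper: you regroup the supremum in \cref{thm:vp} by rotation vector, and you obtain attainment from the rotation vector of the equilibrium pair supplied by \cref{thm:existence}. The paper also proves compactness of $\mathcal R(\boldsymbol\psi)$, describing it as the image of the measures on $\GammaT$ with equal marginals; as you observe, this is not needed for attainment.
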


\begin{proof}
Let $\mathcal S_T$ be the set of probability measures on $\GammaT$ with equal
first and second marginals.  It is compact: $\Prob(\GammaT)$ is compact, and
equality of the marginals is closed because it is equivalent to
\[
 \int_{\GammaT}(f(x)-f(y))\,d\gamma(x,y)=0
 \quad\text{for every }f\in C(X).
\]
Let $\gamma\in\mathcal S_T$ and denote its common marginal by $\mu$.
Disintegration over the first coordinate gives
$\gamma(dx,dy)=\mu(dx)Q_x^0(dy)$, with
$Q_x^0(T(x))=1$ for $\mu$-almost every $x$.  The Borel-selector modification
used explicitly in the proof of \cref{prop:markovization} produces
$Q\in\Ker(X;T)$ without changing the disintegration.  Since the second
marginal is also $\mu$,
\[
 (\mu Q)(A)=\gamma(X\times A)=\mu(A),
\]
so $\mu\in\Inv(X,Q)$ and $\gamma=\gamma_{\mu,Q}$.  Conversely, every
stationary pair has a two-coordinate distribution in $\mathcal S_T$.  Therefore
$\mathcal R(\boldsymbol\psi)$ is exactly the continuous image of
$\mathcal S_T$ under
$\gamma\mapsto\int\boldsymbol\psi\,d\gamma$, and is compact.

For a stationary pair with rotation vector $\boldsymbol z$, the definition of
$\mathfrak h$ gives
\[
 h_\mu(Q)+F(\boldsymbol z)
 \leq\mathfrak h(\boldsymbol z)+F(\boldsymbol z).
\]
Conversely, for every $\eta>0$ the definition of the supremum supplies a
stationary pair at $\boldsymbol z$ whose entropy is greater than
$\mathfrak h(\boldsymbol z)-\eta$.  Hence
\begin{align*}
 &\sup_{\substack{Q\in\Ker(X;T)\\\mu\in\Inv(X,Q)}}
 \left\{h_\mu(Q)+
 F\left(\int\boldsymbol\psi\,d\gamma_{\mu,Q}\right)\right\}\\
 &\qquad=
 \sup_{\boldsymbol z\in\mathcal R(\boldsymbol\psi)}
 \{\mathfrak h(\boldsymbol z)+F(\boldsymbol z)\}.
\end{align*}
The variational principle \cref{thm:vp} identifies the left-hand side with
the nonlinear topological
pressure, so
\[
 \Ptop^{F,\boldsymbol\psi}(T)
 =\sup_{\boldsymbol z\in\mathcal R(\boldsymbol\psi)}
  \{\mathfrak h(\boldsymbol z)+F(\boldsymbol z)\}.
\]
By \cref{thm:existence}, an equilibrium pair exists.  Its rotation vector
attains the last supremum, which is consequently a maximum and proves
\eqref{eq:finite-reduction}.
\end{proof}

\subsection{Legendre duality}

For $\boldsymbol y\in\R^d$, let
\begin{equation}\label{eq:linear-pressure-function}
 P(\boldsymbol y)
 :=P(T,\boldsymbol y\cdot\boldsymbol\psi)
 =\sup_{\boldsymbol z\in\mathcal R(\boldsymbol\psi)}
    \bigl(\mathfrak h(\boldsymbol z)+\boldsymbol y\cdot\boldsymbol z\bigr).
\end{equation}
Indeed, the energy
$\gamma\mapsto\boldsymbol y\cdot\int\boldsymbol\psi\,d\gamma$ is affine and
therefore convex.  It has an abundance of ergodic path measures by
\cref{prop:abundance-criteria}(ii).  Applying the variational principle
\cref{thm:vp} to this energy and
grouping stationary pairs according to their rotation vectors, as in the
preceding proof, gives both equalities in
\eqref{eq:linear-pressure-function}.

For a two-coordinate potential $\varphi\in C(\GammaT,\R)$, let
$\Eq(T,\varphi)$ denote the stationary transition pairs attaining its linear
variational pressure.

\begin{definition}\label{def6.2}
Let $r\in\N\cup\{\infty,\omega\}$, where $r=\omega$ denotes real analyticity.
The \emph{$C^r$ Legendre assumptions} for $(T,\boldsymbol\psi)$ are:
\begin{enumerate}[label=\textup{(\roman*)}]
 \item $\mathcal R(\boldsymbol\psi)$ has nonempty interior;
 \item $\mathfrak h$ is finite and upper semicontinuous on the rotation set,
       and is $C^r$ and strictly concave on its interior;
 \item $P$ is $C^r$ and strictly convex, and
       $\nabla P:\R^d\to\intt\mathcal R(\boldsymbol\psi)$ is a
       homeomorphism; for $r\ne1$, this map and its inverse are $C^{r-1}$;
 \item the Legendre identity
 \[
  \mathfrak h(\boldsymbol z)
  =\inf_{\boldsymbol y\in\R^d}
    \bigl(P(\boldsymbol y)-\boldsymbol y\cdot\boldsymbol z\bigr)
 \]
 holds on the rotation set.
\end{enumerate}
These are the finite-dimensional Legendre hypotheses used below; compare
the classical Legendre--Fenchel theory in \cite{rockafellar-convex-analysis}
and its thermodynamic formulation in
\cite[Definitions~3.8 and 3.10]{buzzi-kloeckner-leplaideur-nonlinear}.
For $r\in\{\infty,\omega\}$, the notation $C^{r-1}$ in item~\textup{(iii)}
means $C^r$.
\end{definition}

We say that $(T,\boldsymbol\psi)$ has \emph{unique linear
equilibrium pairs} if $\Eq(T,\boldsymbol y\cdot\boldsymbol\psi)$ consists of one
equivalence class for every $\boldsymbol y\in\R^d$, where kernels are identified
when they agree almost everywhere with respect to their stationary measure.

\begin{theorem}[Reduction to linear equilibrium pairs]
\label{thm:legendre}
Let $T$ be a forward expansive correspondence on a compact metric space, let
$\boldsymbol\psi\in C(\GammaT,\R^d)$, and assume that
$(T,\boldsymbol\psi)$ satisfies the $C^1$ Legendre assumptions.  Let $U$ be
an open neighborhood of the convex hull of $\boldsymbol\psi(\GammaT)$ and let
$F\in C^1(U)$.  Assume that
$(T,\cE_{F,\boldsymbol\psi})$ has an abundance of ergodic path measures and
suppose that the maximizing set
\[
 V:=\argmax_{\boldsymbol z\in\mathcal R(\boldsymbol\psi)}
   \bigl(\mathfrak h(\boldsymbol z)+F(\boldsymbol z)\bigr)
\]
is contained in $\intt\mathcal R(\boldsymbol\psi)$.  Set
$Y=(\nabla P)^{-1}(V)$.  Then the nonlinear equilibrium pairs are exactly
\begin{equation}\label{eq:union-linear-eq}
 \bigcup_{\boldsymbol y\in Y}
 \Eq(T,\boldsymbol y\cdot\boldsymbol\psi).
\end{equation}
Moreover, if
$\boldsymbol z=\nabla P(\boldsymbol y)\in V$, then
\begin{equation}\label{eq:self-consistency}
 \boldsymbol y=\nabla F(\boldsymbol z),
 \qquad
 \boldsymbol z=\nabla P(\nabla F(\boldsymbol z)).
\end{equation}
\end{theorem}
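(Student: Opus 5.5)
The proof has two halves: first-order conditions on the finite-dimensional problem in \cref{prop:finite-reduction}, and a comparison of equilibrium pairs through the linear variational principle \eqref{eq:linear-pressure-function}.

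\textbf{Step 1: self-consistency.} Fix $\boldsymbol z\in V$. By hypothesis $\boldsymbol z$ lies in the interior of the rotation set. There $\mathfrak h$ is $C^1$, and $F$ is $C^1$ on $U$. So $\boldsymbol z$ is an interior maximizer of the differentiable function $\mathfrak h+F$, and
\[
\nabla\mathfrak h(\boldsymbol z)=-\nabla F(\boldsymbol z).
\]

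\textbf{Step 2: identify $-\nabla\mathfrak h$ with $(\nabla P)^{-1}$.} Let $\boldsymbol y:=(\nabla P)^{-1}(\boldsymbol z)$, which is well defined by Legendre assumption (iii).
\begin{itemize}
\item Strict convexity and differentiability of $P$ show that $\boldsymbol y$ is the unique minimizer of $P(\boldsymbol y')-\boldsymbol y'\cdot\boldsymbol z$.
\item By the Legendre identity (iv), $\mathfrak h(\boldsymbol z)=P(\boldsymbol y)-\boldsymbol y\cdot\boldsymbol z$.
\item For every $\boldsymbol z'$ we have $\mathfrak h(\boldsymbol z')\le P(\boldsymbol y)-\boldsymbol y\cdot\boldsymbol z'$. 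Hence $\boldsymbol z\mapsto\mathfrak h(\boldsymbol z)+\boldsymbol y\cdot\boldsymbol z$ attains its maximum $P(\boldsymbol y)$ at $\boldsymbol z$.
\item Differentiability of $\mathfrak h$ at the interior point $\boldsymbol z$ then gives $\nabla\mathfrak h(\boldsymbol z)=-\boldsymbol y$.
\end{itemize}
Combining with Step 1 gives $\boldsymbol y=\nabla F(\boldsymbol z)$, and $\boldsymbol z=\nabla P(\nabla F(\boldsymbol z))$ follows. This proves \eqref{eq:self-consistency} and shows $Y=\{(\nabla P)^{-1}(\boldsymbol z):\boldsymbol z\in V\}$.

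\textbf{Step 3: nonlinear equilibrium pairs are linear equilibrium pairs.} Let $(\mu,Q)$ be a nonlinear equilibrium pair with rotation vector $\boldsymbol z$.
\begin{itemize}
\item The supremum in \cref{prop:finite-reduction} is attained at $\boldsymbol z$ with $h_\mu(Q)=\mathfrak h(\boldsymbol z)$. Otherwise a pair with the same rotation vector and larger entropy would beat the maximum.
\item So $\boldsymbol z\in V\subset\intt\mathcal R(\boldsymbol\psi)$. Take $\boldsymbol y$ from Step 2.
\item Then $h_\mu(Q)+\boldsymbol y\cdot\boldsymbol z=\mathfrak h(\boldsymbol z)+\boldsymbol y\cdot\boldsymbol z=P(\boldsymbol y)$.
\end{itemize}
By \eqref{eq:linear-pressure-function}, $(\mu,Q)\in\Eq(T,\boldsymbol y\cdot\boldsymbol\psi)$ with $\boldsymbol y\in Y$.

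\textbf{Step 4: linear equilibrium pairs at $Y$ are nonlinear equilibrium pairs.} Let $\boldsymbol y\in Y$ and $(\mu,Q)\in\Eq(T,\boldsymbol y\cdot\boldsymbol\psi)$, with rotation vector $\boldsymbol z'$.
\begin{itemize}
\item The pair satisfies $h_\mu(Q)+\boldsymbol y\cdot\boldsymbol z'=P(\boldsymbol y)$.
\item Also $h_\mu(Q)\le\mathfrak h(\boldsymbol z')\le P(\boldsymbol y)-\boldsymbol y\cdot\boldsymbol z'$, so equality holds throughout.
\item Thus $\boldsymbol z'$ attains the infimum in (iv). Since $P$ is differentiable, $\boldsymbol y$ minimizes the convex function $P(\cdot)-(\cdot)\cdot\boldsymbol z'$. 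This forces $\nabla P(\boldsymbol y)=\boldsymbol z'$.
\item Therefore $\boldsymbol z'=\nabla P(\boldsymbol y)\in V$, and $h_\mu(Q)=\mathfrak h(\boldsymbol z')$.
\end{itemize}
Hence $h_\mu(Q)+F(\boldsymbol z')=\max(\mathfrak h+F)=\Ptop^{F,\boldsymbol\psi}(T)$ by \cref{prop:finite-reduction}, so $(\mu,Q)$ is a nonlinear equilibrium pair.

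The only delicate point is the last step's use of the Legendre identity. The bound $\mathfrak h(\boldsymbol z')\le P(\boldsymbol y)-\boldsymbol y\cdot\boldsymbol z'$ needs no interiority: it follows directly from \eqref{eq:linear-pressure-function}. Equality then pins $\boldsymbol z'$ to $\nabla P(\boldsymbol y)$ by differentiability of $P$, and this holds even when $\boldsymbol z'$ a priori lies on the boundary of the rotation set.
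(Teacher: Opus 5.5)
Your proof is correct and follows essentially the same route as the paper: you use the attained Legendre identity at $\boldsymbol y=(\nabla P)^{-1}(\boldsymbol z)$, get the forward inclusion from $h_\mu(Q)=\mathfrak h(\boldsymbol z)$, get the reverse inclusion by showing that the rotation vector of a linear equilibrium pair is forced to equal $\nabla P(\boldsymbol y)$ (your minimization argument is the paper's subgradient argument in different words), and finish with the first-order condition for $\mathfrak h+F$. The only minor difference is that you obtain $\nabla\mathfrak h(\boldsymbol z)=-\boldsymbol y$ from the first-order condition for maximizing $\mathfrak h+\boldsymbol y\cdot(\cdot)$, relying on the differentiability of $\mathfrak h$ granted by Legendre assumption (ii), whereas the paper derives it from a two-sided difference-quotient estimate that uses continuity of $(\nabla P)^{-1}$.
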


\begin{proof}
By \cref{prop:finite-reduction}, the set $V$ is nonempty.  It is compact
because $\mathfrak h+F$ is upper semicontinuous on the compact rotation set.
The hypothesis $V\subset\intt\mathcal R(\boldsymbol\psi)$ and continuity of
$(\nabla P)^{-1}$ then show that $Y=(\nabla P)^{-1}(V)$ is also nonempty and
compact.

Fix $\boldsymbol z\in\intt\mathcal R(\boldsymbol\psi)$ and set
$\boldsymbol y=(\nabla P)^{-1}(\boldsymbol z)$.  Consider the strictly convex
function
\[
 \boldsymbol v\longmapsto
 P(\boldsymbol v)-\boldsymbol v\cdot\boldsymbol z.
\]
Its gradient vanishes at $\boldsymbol v=\boldsymbol y$ because
$\nabla P(\boldsymbol y)=\boldsymbol z$.  Thus $\boldsymbol y$ is its unique
minimizer, and the assumed Legendre identity becomes
\begin{equation}\label{eq:attained-legendre-identity}
 \mathfrak h(\boldsymbol z)
 =P(\boldsymbol y)-\boldsymbol y\cdot\boldsymbol z.
\end{equation}

Let $(\mu,Q)$ be any stationary pair with rotation vector
$\boldsymbol z$.  The linear variational formula
\eqref{eq:linear-pressure-function} gives
\[
 h_\mu(Q)+\boldsymbol y\cdot\boldsymbol z
 \leq P(\boldsymbol y),
\]
with equality precisely when $(\mu,Q)$ is a linear equilibrium pair for
$\boldsymbol y\cdot\boldsymbol\psi$.  Subtracting
$\boldsymbol y\cdot\boldsymbol z$ and using
\eqref{eq:attained-legendre-identity} proves the exact equivalence
\begin{equation}\label{eq:localized-linear-equivalence}
 h_\mu(Q)=\mathfrak h(\boldsymbol z)
 \quad\Longleftrightarrow\quad
 (\mu,Q)\in\Eq(T,\boldsymbol y\cdot\boldsymbol\psi).
\end{equation}

Suppose first that $(\mu,Q)$ is a nonlinear equilibrium pair and let
$\boldsymbol z$ be its rotation vector.  Its value is
\[
 \Ptop^{F,\boldsymbol\psi}(T)
 =h_\mu(Q)+F(\boldsymbol z)
 \leq\mathfrak h(\boldsymbol z)+F(\boldsymbol z)
 \leq\max_{\boldsymbol w\in\mathcal R(\boldsymbol\psi)}
       \{\mathfrak h(\boldsymbol w)+F(\boldsymbol w)\}.
\]
The first and last terms are equal by
\cref{prop:finite-reduction}; hence both inequalities are equalities.  Thus
$\boldsymbol z\in V$ and
$h_\mu(Q)=\mathfrak h(\boldsymbol z)$.  Since $V$ lies in the interior,
\eqref{eq:localized-linear-equivalence} applies with
$\boldsymbol y=(\nabla P)^{-1}(\boldsymbol z)\in Y$.  Therefore $(\mu,Q)$
belongs to the right-hand side of \eqref{eq:union-linear-eq}.

Conversely, take $\boldsymbol y\in Y$, put
$\boldsymbol z=\nabla P(\boldsymbol y)\in V$, and let
$(\mu,Q)\in\Eq(T,\boldsymbol y\cdot\boldsymbol\psi)$.
Write
$\boldsymbol z_{\mu,Q}:=\int\boldsymbol\psi\,d\gamma_{\mu,Q}$.  For every
$\boldsymbol v\in\R^d$, the variational formula gives
\begin{align*}
 P(\boldsymbol v)
 &\geq h_\mu(Q)+\boldsymbol v\cdot\boldsymbol z_{\mu,Q}\\
 &=P(\boldsymbol y)
   +(\boldsymbol v-\boldsymbol y)\cdot\boldsymbol z_{\mu,Q}.
\end{align*}
Thus $\boldsymbol z_{\mu,Q}$ is a subgradient of $P$ at
$\boldsymbol y$.  Since $P$ is differentiable there,
$\boldsymbol z_{\mu,Q}=\nabla P(\boldsymbol y)=\boldsymbol z$.  Equation
\eqref{eq:localized-linear-equivalence} now gives
$h_\mu(Q)=\mathfrak h(\boldsymbol z)$.  Since $\boldsymbol z$ maximizes
$\mathfrak h+F$, this pair attains the nonlinear pressure and is a nonlinear
equilibrium pair.  This proves both inclusions in
\eqref{eq:union-linear-eq}.

To derive the self-consistency equation, put
$\boldsymbol y(\boldsymbol z):=(\nabla P)^{-1}(\boldsymbol z)$.  On the
interior, \eqref{eq:attained-legendre-identity} reads
\[
 \mathfrak h(\boldsymbol z)
 =P(\boldsymbol y(\boldsymbol z))
  -\boldsymbol y(\boldsymbol z)\cdot\boldsymbol z.
\]
Fix a direction $\boldsymbol u$ and write
$\boldsymbol y_t:=\boldsymbol y(\boldsymbol z+t\boldsymbol u)$.  The
minimizing property in the Legendre formula gives
\begin{align*}
 \mathfrak h(\boldsymbol z+t\boldsymbol u)-\mathfrak h(\boldsymbol z)
 &\leq-t\boldsymbol y(\boldsymbol z)\cdot\boldsymbol u,\\
 \mathfrak h(\boldsymbol z+t\boldsymbol u)-\mathfrak h(\boldsymbol z)
 &\geq-t\boldsymbol y_t\cdot\boldsymbol u.
\end{align*}
Since $(\nabla P)^{-1}$ is continuous, $\boldsymbol y_t\to
\boldsymbol y(\boldsymbol z)$.  Dividing by $t$ (with the inequalities
reversed when $t<0$) and letting $t\to0$ yields
\[
 D\mathfrak h(\boldsymbol z)[\boldsymbol u]
 =-\boldsymbol y(\boldsymbol z)\cdot\boldsymbol u.
\]
Hence $\nabla\mathfrak h(\boldsymbol z)=-\boldsymbol y$.  Because every
$\boldsymbol z\in V$ is an interior maximizer of the differentiable function
$\mathfrak h+F$, we have
$0=\nabla\mathfrak h(\boldsymbol z)+\nabla F(\boldsymbol z)$, and therefore
$\boldsymbol y=\nabla F(\boldsymbol z)$.  Substitution into
$\boldsymbol z=\nabla P(\boldsymbol y)$ proves
\eqref{eq:self-consistency}.
\end{proof}

\begin{corollary}\label{cor:uniqueness}
Under the hypotheses of \cref{thm:legendre}, suppose that
$\mathfrak h+F$ has a unique maximizer $\boldsymbol z_*$ and that the linear
potential $\nabla F(\boldsymbol z_*)\cdot\boldsymbol\psi$ has a unique
equilibrium pair.  Then the nonlinear equilibrium pair is unique.
\end{corollary}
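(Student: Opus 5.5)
The plan is to read the corollary directly off \cref{thm:legendre}. First I identify the maximizing set and the parameter set it determines. By hypothesis $V=\{\boldsymbol z_*\}$, and $V\subset\intt\mathcal R(\boldsymbol\psi)$ is already among the hypotheses of \cref{thm:legendre}. The Legendre assumption (iii) says $\nabla P:\R^d\to\intt\mathcal R(\boldsymbol\psi)$ is a homeomorphism, in particular a bijection. Hence $Y=(\nabla P)^{-1}(V)$ is a single point $\boldsymbol y_*$, with $\nabla P(\boldsymbol y_*)=\boldsymbol z_*$.

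Next I identify $\boldsymbol y_*$ through the self-consistency equation \eqref{eq:self-consistency}. Since $\boldsymbol z_*=\nabla P(\boldsymbol y_*)\in V$, that equation gives $\boldsymbol y_*=\nabla F(\boldsymbol z_*)$. Equation \eqref{eq:union-linear-eq} then says that the nonlinear equilibrium pairs are exactly the elements of $\Eq(T,\nabla F(\boldsymbol z_*)\cdot\boldsymbol\psi)$. This set consists of one pair, modulo the a.e.\ identification of kernels adopted after the definition of nonlinear equilibrium pairs, by the assumed uniqueness of the linear equilibrium pair for that potential. Existence is also covered, since \cref{thm:existence} (or nonemptiness of $V$ via \cref{prop:finite-reduction}) guarantees the union is nonempty.

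The only point needing care is the meaning of ``unique''. Both the linear and the nonlinear uniqueness statements are understood modulo kernels that agree $\mu$-almost everywhere. With that convention, the equality of sets in \eqref{eq:union-linear-eq} transfers uniqueness directly, so I expect no real obstacle. The whole argument is a two-line specialization of \cref{thm:legendre}.
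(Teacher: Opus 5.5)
Your proposal is correct and follows the paper's argument: \cref{thm:legendre} identifies the nonlinear equilibrium pairs with the linear equilibrium pairs at the single parameter $\boldsymbol y_*=(\nabla P)^{-1}(\boldsymbol z_*)=\nabla F(\boldsymbol z_*)$, and the assumed linear uniqueness leaves exactly one pair. You also make explicit two steps the paper leaves implicit: $Y$ is a singleton because $\nabla P$ is a bijection onto the interior, and the self-consistency equation identifies its element as $\nabla F(\boldsymbol z_*)$.
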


\begin{proof}
By \cref{thm:legendre}, every nonlinear equilibrium pair is a linear
equilibrium pair at the unique maximizing value $\boldsymbol z_*$.  The assumed
linear uniqueness leaves exactly one such pair.
\end{proof}

\subsection{Analytic finiteness}

\begin{theorem}[Analytic finiteness in one dimension]
\label{thm:analytic-finiteness}
Let $T$ be a forward expansive correspondence on a compact metric space and
let $\psi\in C(\GammaT,\R)$.  Assume that $(T,\psi)$ satisfies the $C^\omega$ Legendre assumptions and
has unique linear equilibrium pairs.  Let
$F:U\to\R$ be real analytic on an open neighborhood $U$ of the convex hull of
$\psi(\GammaT)$, and assume that
$(T,\cE_{F,\psi})$ has an abundance of ergodic path measures.  Then the set of
nonlinear equilibrium values is finite.  Consequently, there are only finitely
many nonlinear equilibrium pairs and associated two-coordinate distributions.
\end{theorem}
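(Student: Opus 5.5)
The plan is to reduce everything to the one-variable function $g:=\mathfrak h+F$ on the rotation set. First I show that its maximizing set $V$ avoids the endpoints, then that $V$ is finite, and finally transfer finiteness to pairs via \cref{thm:legendre}.

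\emph{Setup.} Since $d=1$, \cref{prop:finite-reduction} shows that $\mathcal R(\psi)$ is a compact convex subset of $\R$. It has nonempty interior by Legendre assumption (i), so $\mathcal R(\psi)=[a,b]$ with $a<b$. Moreover $\mathcal R(\psi)$ lies in the convex hull of $\psi(\GammaT)$, hence in $U$, so $F$ is real analytic on a neighborhood of $[a,b]$. In particular $F'$ is bounded near $a$ and near $b$. Put $y(z):=(P')^{-1}(z)$ for $z\in(a,b)$. By assumption (iii), $P'$ is an increasing homeomorphism $\R\to(a,b)$ with analytic inverse. Hence $y$ is real analytic on $(a,b)$, and $y(z)\to-\infty$ as $z\to a^+$ while $y(z)\to+\infty$ as $z\to b^-$. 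The derivative computation in the proof of \cref{thm:legendre} uses only the Legendre assumptions, not the hypothesis on $V$. It gives $\mathfrak h'(z)=-y(z)$ on $(a,b)$, so
\[
 g'(z)=F'(z)-y(z)\qquad(a<z<b),
\]
and $g$ is real analytic on $(a,b)$.

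\emph{Step 1: endpoints are not maximizers.} By the boundary behaviour of $y$ and boundedness of $F'$, we have $g'(z)\to+\infty$ as $z\to a^+$. So $g$ is strictly increasing on some interval $(a,a+\delta)$. The function $\mathfrak h$ is concave on $[a,b]$ (it is an infimum of affine functions by (iv)) and upper semicontinuous by (ii). A concave, upper semicontinuous function on a compact interval is continuous at the endpoints, so $g(a)=\lim_{z\to a^+}g(z)<g(a+\delta/2)$. Hence $a\notin V$, and symmetrically $b\notin V$. Thus $V\subset\intt\mathcal R(\psi)$, and the interior hypothesis of \cref{thm:legendre} is verified rather than assumed.

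\emph{Step 2: $V$ is finite.} This is the main step. Every $z\in V$ is an interior maximizer, so $g'(z)=0$, that is, $F'(z)=y(z)$. Suppose $V$ were infinite. By compactness it has an accumulation point $z_0\in[a,b]$.
\begin{itemize}
 \item If $z_0=a$, take $z_n\in V$ with $z_n\to a$. Then $y(z_n)=F'(z_n)\to F'(a)$ is finite, contradicting $y(z_n)\to-\infty$. The case $z_0=b$ is identical.
 \item If $z_0\in(a,b)$, the analytic function $g'$ has zeros accumulating at an interior point. The identity theorem on the connected interval $(a,b)$ then forces $g'\equiv0$, so $y\equiv F'$ on $(a,b)$. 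Letting $z\to a^+$ again gives a contradiction with the boundedness of $F'$.
\end{itemize}
Hence $V$ is finite, which is exactly the set of nonlinear equilibrium values.

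\emph{Step 3: finitely many pairs and distributions.} With $V\subset\intt\mathcal R(\psi)$ established, \cref{thm:legendre} applies. The nonlinear equilibrium pairs are exactly $\bigcup_{y\in Y}\Eq(T,y\psi)$, where $Y=(P')^{-1}(V)$ is finite because $P'$ is a bijection. By the hypothesis of unique linear equilibrium pairs, each $\Eq(T,y\psi)$ is a single equivalence class. Pairs in one class share the same two-coordinate distribution, since kernels agreeing $\mu$-a.e. give the same $\gamma_{\mu,Q}$. Therefore there are at most $\#V$ nonlinear equilibrium pairs, counted up to this identification, and at most $\#V$ associated two-coordinate distributions.

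The delicate point throughout is the endpoint behaviour. The identity theorem alone does not exclude maximizers accumulating at $a$ or $b$, nor equilibria sitting at the boundary. Both are handled by the single observation that $y=(P')^{-1}$ blows up at the ends of the rotation interval while $F'$ stays bounded there.
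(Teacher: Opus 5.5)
Your proof is correct and follows essentially the same route as the paper. Concavity plus upper semicontinuity gives continuity of $g=\mathfrak h+F$ on $[a,b]$; the endpoints are excluded because $\mathfrak h'=-(P')^{-1}$ blows up while $F'$ stays bounded; the analytic function $g'=F'-(P')^{-1}$ has isolated zeros; and \cref{thm:legendre} with unique linear equilibrium pairs then gives finitely many pairs. The only differences are minor. You exclude accumulation of $V$ at the endpoints, and the case $g'\equiv0$, directly from the blow-up of $(P')^{-1}$. The paper instead uses compactness of $V\subset(a,b)$ and observes that a constant $g$ would put the endpoints in $V$.
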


\begin{proof}
Write $\mathcal R(\psi)=[a,b]$.  Its interior is nonempty by the Legendre
hypothesis.  Let
\[
 g(z):=\mathfrak h(z)+F(z),
 \qquad
 V:=\argmax_{z\in[a,b]}g(z).
\]
By \cref{prop:finite-reduction}, $V$ is the set of nonlinear equilibrium
values.

We first record the endpoint regularity needed below.  A finite concave
function on $[a,b]$ is continuous on $(a,b)$ and has one-sided limits at the
endpoints.  Concavity gives
$\mathfrak h(a)\leq\liminf_{z\downarrow a}\mathfrak h(z)$ and
$\mathfrak h(b)\leq\liminf_{z\uparrow b}\mathfrak h(z)$, while upper
semicontinuity gives the reverse inequalities.  Thus $\mathfrak h$, and hence
$g$, is continuous on all of $[a,b]$.  Consequently $V$ is nonempty and
compact.

The endpoints cannot maximize $g$.  Put $y(z):=(P')^{-1}(z)$.  The attained
Legendre identity proved in \eqref{eq:attained-legendre-identity} reads
$\mathfrak h(z)=P(y(z))-y(z)z$.  Differentiating it and using
$P'(y(z))=z$ cancels the two terms containing $y'(z)$ and gives
\begin{equation*}
 \mathfrak h'(z)=-(P')^{-1}(z).
\end{equation*}
Since $P':\R\to(a,b)$ is an increasing diffeomorphism,
\begin{equation*}
 \lim_{z\downarrow a}\mathfrak h'(z)=+\infty,
 \qquad
 \lim_{z\uparrow b}\mathfrak h'(z)=-\infty.
\end{equation*}
The derivative $F'$ is bounded on a neighborhood of $[a,b]$.  Hence $g'>0$
on $(a,a+\delta)$ and $g'<0$ on $(b-\delta,b)$ for some $\delta>0$.
The mean value theorem shows that $g$ is strictly increasing on the first
interval and strictly decreasing on the second.  Continuity at the endpoints
then gives $g(z)>g(a)$ for $a<z<a+\delta$ and
$g(z)>g(b)$ for $b-\delta<z<b$. Hence neither endpoint maximizes $g$,
so $V$ is a compact subset of $(a,b)$.

Because $P'$ is a diffeomorphism, its derivative does not vanish; because
$P$ is real analytic, the analytic inverse function theorem applies at every
point.  The local analytic inverses agree with the global inverse, so
$(P')^{-1}$ is real analytic on $(a,b)$.  Therefore
\begin{equation*}
 H(z):=g'(z)
 =F'(z)-(P')^{-1}(z)
\end{equation*}
is real analytic on $(a,b)$, and every point of $V$ is a zero of $H$.  The
function $H$ is not identically zero.  Indeed, if it vanished identically, then
$g$ would be constant on $(a,b)$.  The endpoint continuity proved above then
makes $g$ constant on all of $[a,b]$.  Both endpoints would lie in $V$,
contradicting the exclusion of the endpoints.

A nonzero real-analytic function has isolated zeros.  An infinite subset of a
compact interval has an accumulation point; hence the compact set $V$ cannot
be an infinite subset of the isolated zero set of $H$.  Thus $V$ is finite.
For each $z\in V$,
\cref{thm:legendre} identifies all nonlinear equilibrium pairs with the linear
equilibrium pairs for
\[
 y\psi,\qquad y=(P')^{-1}(z)=F'(z).
\]
There is exactly one such pair by hypothesis.  Hence the set of nonlinear
equilibrium pairs is finite, and its image under
$(\mu,Q)\mapsto\gamma_{\mu,Q}$---the set of two-coordinate distributions of equilibrium pairs---is finite
as well.
\end{proof}

For a forward expansive correspondence, a continuous potential $\psi$ is
\emph{Bowen summable} if there are an expansive constant $\eps>0$ and
$K<\infty$ such that
\[
 \left|\sum_{j=0}^{n-1}
  \bigl(\psi(x_j,x_{j+1})-\psi(x'_j,x'_{j+1})\bigr)\right|\leq K
\]
whenever $n\geq1$, $\boldsymbol x,\boldsymbol x'\in\cO_{n+1}(T)$, and
$d_{n+1}(\boldsymbol x,\boldsymbol x')<\eps$; see
\cite[Definition~7.4]{li-li-zhang-correspondences}.

\begin{corollary}\label{cor:analytic-ruelle}
Let $T$ be a forward expansive correspondence with specification on a compact
metric space, and let $\psi\in C(\GammaT,\R)$ be Bowen summable.
Suppose that $(T,\psi)$ satisfies the $C^\omega$ Legendre assumptions.
Let $F$ be real analytic on an open neighborhood of the convex hull of
$\psi(\GammaT)$. Then the energy
$\cE(\gamma)=F(\int_{\GammaT}\psi\,d\gamma)$ has only finitely many nonlinear
equilibrium pairs.
\end{corollary}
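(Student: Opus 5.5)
The plan is to reduce \cref{cor:analytic-ruelle} to \cref{thm:analytic-finiteness}. That theorem needs three hypotheses beyond those we are given: the $C^\omega$ Legendre assumptions (assumed), unique linear equilibrium pairs for $(T,\psi)$, and abundance of ergodic path measures for $\cE_{F,\psi}$.

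Abundance is immediate. $T$ is forward expansive with specification, so \cref{cor:specification-abundance} gives abundance for every continuous energy on $\Prob(\GammaT)$. In particular it covers $\cE_{F,\psi}$, which is continuous because $F$ is continuous and $\gamma\mapsto\int\psi\,d\gamma$ is weak-star continuous.

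The substantive step is uniqueness of linear equilibrium pairs for every $y\psi$, $y\in\R$. I would first note that Bowen summability is preserved by real multiples: the constant $K$ simply becomes $|y|K$. I would then invoke the linear uniqueness theorem of Li--Li--Zhang for forward expansive correspondences with specification and Bowen summable two-coordinate potentials. That theorem provides a unique equilibrium pair for $y\psi$, up to kernels agreeing $\mu$-almost everywhere.

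If the cited result is phrased for path measures rather than pairs, I would transfer it through \cref{prop:markovization} and the argument of \cref{prop:equilibrium-edge-laws} applied to the affine energy. Every linear equilibrium pair $(\mu,Q)$ gives a maximizing Markov path measure $\nu_{\mu,Q}$. Uniqueness of the maximizing path measure forces a single two-coordinate distribution $\gamma_{\mu,Q}$. Disintegrating $\gamma_{\mu,Q}$ then determines $\mu$, and $Q$ up to $\mu$-null sets. This yields uniqueness in the equivalence-class sense used in the definition of unique linear equilibrium pairs.

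With all hypotheses in place, \cref{thm:analytic-finiteness} gives finitely many equilibrium values and hence finitely many nonlinear equilibrium pairs. I expect the main obstacle to be matching the exact formulation of the cited Bowen-uniqueness result, whether it is stated for path measures or for pairs and for which scale $\eps$, to the equivalence-class uniqueness required here. The Markov-replacement transfer above is the tool for that, and it is also the step where the Bowen-summability constant must be checked for every $y$ at a common expansive scale.
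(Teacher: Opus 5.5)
Your proposal is correct and follows the paper's proof. The paper likewise gets abundance from \cref{cor:specification-abundance}, notes that $y\psi$ is Bowen summable with constant $|y|K$, and cites Theorem~B of Li--Li--Zhang for a unique linear equilibrium pair (kernels identified almost everywhere), then applies \cref{thm:analytic-finiteness}. The cited result is used directly in the pair formulation, so your Markov-replacement fallback is not needed, though it is valid.
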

\begin{proof}
The energy is continuous, and \cref{cor:specification-abundance} gives
abundance. For each $y\in\R$, the displayed bound for $\psi$ gives the
bound $|y|K$ for $y\psi$, so every $y\psi$ is Bowen summable.
By \cite[Theorem~B]{li-li-zhang-correspondences}, it has a unique linear
equilibrium pair, with kernels identified almost everywhere with respect
to the stationary measure. Thus \cref{thm:analytic-finiteness} applies.
\end{proof}

\begin{remark}
Real analyticity of $F$ cannot in general be replaced by
$C^\infty$ regularity, even with $T$ and $\psi$ as in
\cref{cor:analytic-ruelle}. Write $\mathcal R(\psi)=[a,b]$ and fix a nonempty
compact set $K\subset(a,b)$.

Choose a nonnegative function $q\in C^\infty(\mathbb R)$ with
$q^{-1}(0)=K$, and a smooth function
$\chi:\mathbb R\to[0,1]$ with compact support in $(a,b)$ and
$\chi=1$ on a neighborhood of $K$.
Take $M>\max_{z\in[a,b]}\mathfrak h(z)$ and define
\[
F(z):=
\begin{cases}
-\chi(z)\bigl(\mathfrak h(z)+q(z)\bigr)-(1-\chi(z))M,
   & z\in(a,b),\\
-M, & z\notin(a,b).
\end{cases}
\]
Since $\mathfrak h$ is smooth on $(a,b)$ and $\chi$ vanishes near the
endpoints, $F\in C^\infty(\mathbb R)$.
For every $z\in[a,b]$,
\[
\mathfrak h(z)+F(z)
   =-\chi(z)q(z)+(1-\chi(z))(\mathfrak h(z)-M)
   \le0,
\]
with equality exactly when $z\in K$. Hence
\[
\operatorname*{argmax}_{z\in[a,b]}\bigl(\mathfrak h(z)+F(z)\bigr)=K.
\]
This is the cutoff construction of
\cite[Proposition~3.23]{buzzi-kloeckner-leplaideur-nonlinear}.
\cref{cor:specification-abundance} supplies abundance, and \cref{thm:legendre}, together
with linear uniqueness, gives exactly one nonlinear equilibrium
pair for each $z\in K$. Thus finite, countably infinite, and
uncountable equilibrium sets can occur.

For $d\ge2$, analytic critical sets need not be discrete.
The isolated-zero argument used in \cref{thm:analytic-finiteness} therefore does
not establish finiteness in higher dimensions.
\end{remark}

\subsection{Continuous transitions and Taylor expansions}

We will use a differentiation rule for maxima. Let $K$ be a nonempty compact
metric space, let $a:K\to\R$ be upper semicontinuous, and let $b:K\to\R$
be continuous. For $t\in\R$, set
\[
 v(t):=\max_{z\in K}\{a(z)+tb(z)\},
 \qquad
 M(t):=\operatorname*{argmax}_{z\in K}\{a(z)+tb(z)\}.
\]
The left and right derivatives satisfy
\begin{equation}\label{eq:maxima-one-sided-derivatives}
 v'_-(t)=\min_{z\in M(t)}b(z),
 \qquad
 v'_+(t)=\max_{z\in M(t)}b(z).
\end{equation}
Indeed, for $s>0$, $z\in M(t)$ and $z_s\in M(t+s)$, maximality gives
\[
 b(z)\leq\frac{v(t+s)-v(t)}s\leq b(z_s).
\]
The function $v$ is Lipschitz because $b$ is bounded. Compactness and upper
semicontinuity imply that every limit point of $z_s$ as $s\to0^+$ belongs
to $M(t)$, which proves the right-derivative formula. Applying the same
inequality at $t-s$ and $t$ gives the left-derivative formula.

\begin{theorem}[A Taylor criterion for continuous transitions]
\label{thm:taylor-critical-exponent}
Let $T$ be a forward expansive correspondence on a compact metric space, let
$\psi\in C(\GammaT,\R)$, and assume that $(T,\psi)$ satisfies the $C^\omega$ Legendre assumptions
and has unique linear equilibrium pairs.  Suppose that the rotation interval is
symmetric about $0$, that the linear
pressure $P$ is even, and that the response ratio
\begin{equation*}
 \mathscr R(y):=\frac{y}{P'(y)}
\end{equation*}
is strictly increasing for $y>0$.  Assume that, for some $a,b>0$ and
$k\in\N$ with $k\geq1$,
\begin{equation}\label{eq:response-taylor-jet}
 P'(y)=ay-by^{2k+1}+O(y^{2k+3})
 \qquad(y\to0).
\end{equation}
For $\beta\geq0$, set
\begin{equation}\label{eq:quadratic-family}
 \cE_\beta(\gamma)
 :=\frac\beta2\left(\int_{\GammaT}\psi\,d\gamma\right)^2
\end{equation}
and write $\Phi(\beta):=\Ptop^{\cE_\beta}(T)$.  Then the critical inverse
temperature is
\begin{equation*}
 \beta_c=\frac1a.
\end{equation*}
More precisely:
\begin{enumerate}[label=\textup{(\roman*)}]
\item If $0\leq\beta\leq\beta_c$, the unique nonlinear equilibrium
pair is the linear equilibrium pair at $y=0$, and its rotation value is zero.
\item If $\beta>\beta_c$, there are exactly two nonlinear
equilibrium pairs: the linear equilibrium pairs at $y=\pm y_\beta$, where
$y_\beta>0$ is the unique solution of
\begin{equation*}
 y_\beta=\beta P'(y_\beta).
\end{equation*}
Their rotation values are $\pm z_\beta$, with $z_\beta:=P'(y_\beta)$.
\item As $\beta\downarrow\beta_c$,
\begin{align}
 y_\beta
 &\sim\left(\frac{a^2}{b}\right)^{1/(2k)}
       (\beta-\beta_c)^{1/(2k)},
 \label{eq:general-dual-exponent}\\
 z_\beta
 &\sim C\,(\beta-\beta_c)^{1/(2k)},
 \qquad
 C:=a\left(\frac{a^2}{b}\right)^{1/(2k)}.
 \label{eq:general-order-exponent}
\end{align}
\item The pressure satisfies
\begin{equation}\label{eq:general-nonlinear-pressure}
 \Phi(\beta)=
 \begin{cases}
  P(0),&0\leq\beta\leq\beta_c,\\
  P(y_\beta)-\dfrac\beta2z_\beta^2,&\beta>\beta_c,
 \end{cases}
\end{equation}
and, as $\beta\downarrow\beta_c$, with $A:=C^2/2$,
\begin{align}
 \Phi'(\beta)
 &\sim A(\beta-\beta_c)^{1/k},
 \label{eq:general-pressure-derivative-singularity}\\
 \Phi(\beta)-P(0)
 &\sim\frac{k}{k+1}A
       (\beta-\beta_c)^{1+1/k}.
 \label{eq:general-pressure-singularity}
\end{align}
In particular, $\Phi$ is $C^1$ but not $C^2$ at
$\beta=\beta_c$.
\end{enumerate}
\end{theorem}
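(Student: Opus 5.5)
The energy $\cE_\beta$ has the form $F_\beta(z)=\tfrac\beta2 z^2$ with $F_\beta$ real analytic. I would therefore reduce everything to the one-dimensional problem of \cref{prop:finite-reduction}. Put $g_\beta(z):=\mathfrak h(z)+\tfrac\beta2z^2$ on $\mathcal R(\psi)=[-c,c]$. Abundance holds for $\beta\geq0$ by \cref{prop:abundance-criteria}(ii), since $\cE_\beta$ is convex on $\Prob(\GammaT)$ as the square of an affine map.

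The endpoint argument from the proof of \cref{thm:analytic-finiteness} shows $V_\beta:=\argmax g_\beta\subset(-c,c)$. Then \cref{thm:legendre} identifies the nonlinear equilibrium pairs with $\bigcup_{y\in Y}\Eq(T,y\psi)$ and gives the self-consistency relation $y=\beta P'(y)$ with $z=P'(y)$. Because linear equilibrium pairs are unique, it remains to locate the global maximizers of $g_\beta$, equivalently the corresponding $y$.

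To locate them I pass to the dual variable. Using $\mathfrak h'(z)=-(P')^{-1}(z)$, for $z=P'(y)$ with $y>0$ we get
\[
 g_\beta'(z)=\beta P'(y)-y=P'(y)\bigl(\beta-\mathscr R(y)\bigr).
\]
Evenness of $P$ makes $P'$ odd, so $g_\beta$ is even. The expansion \eqref{eq:response-taylor-jet} gives $\mathscr R(y)\to1/a$ as $y\downarrow0$, and $\mathscr R$ is strictly increasing.
\begin{enumerate}[label=\textup{(\alph*)}]
 \item If $\beta\leq1/a$, then $\beta-\mathscr R(y)<0$ for all $y>0$. So $g_\beta$ is strictly decreasing on $(0,c)$, strictly increasing on $(-c,0)$ by symmetry, and $V_\beta=\{0\}$. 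This gives (i).
 \item If $\beta>1/a$, I claim $\mathscr R(y)\to\infty$ as $y\to\infty$, because $P'(y)\uparrow c<\infty$. Hence there is a unique $y_\beta>0$ with $\mathscr R(y_\beta)=\beta$. The sign of $g_\beta'$ is positive on $(0,z_\beta)$ and negative on $(z_\beta,c)$, so $V_\beta=\{\pm z_\beta\}$. Both points give the same value by evenness, and they are distinct pairs because their rotation values differ. This gives (ii).
\end{enumerate}

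For (iii), I write $\mathscr R(y)=\bigl(a-by^{2k}+O(y^{2k+2})\bigr)^{-1}=\tfrac1a+\tfrac b{a^2}y^{2k}+O(y^{2k+2})$. Solving $\mathscr R(y_\beta)=\beta$ gives $y_\beta^{2k}\sim\tfrac{a^2}{b}(\beta-\beta_c)$, and then $z_\beta=P'(y_\beta)\sim a\,y_\beta$, which yields the stated constant $C$.

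For (iv), the formula \eqref{eq:general-nonlinear-pressure} is $g_\beta(z_\beta)$ combined with the attained Legendre identity $\mathfrak h(z_\beta)=P(y_\beta)-y_\beta z_\beta$ and $y_\beta=\beta z_\beta$. For the derivative I apply the max-differentiation rule \eqref{eq:maxima-one-sided-derivatives} with $a=\mathfrak h$ (upper semicontinuous), $b(z)=z^2/2$ and $t=\beta$. Since $b$ takes the single value $z_\beta^2/2$ on $V_\beta$, the one-sided derivatives agree and $\Phi'(\beta)=z_\beta^2/2$ for $\beta>\beta_c$, while $\Phi'=0$ for $\beta\leq\beta_c$. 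Then $\Phi'(\beta)\sim\tfrac{C^2}{2}(\beta-\beta_c)^{1/k}=A(\beta-\beta_c)^{1/k}$. Integrating from $\beta_c$, where $\Phi$ is Lipschitz and hence absolutely continuous, gives \eqref{eq:general-pressure-singularity}. The derivative $\Phi'$ is continuous at $\beta_c$ with value $0$. Its difference quotient behaves like $(\beta-\beta_c)^{1/k-1}$, which is infinite for $k\geq2$ and equal to $A>0$ for $k=1$, while the left second derivative is $0$. In either case $\Phi$ is not $C^2$ at $\beta_c$.

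I expect the main obstacle to be the global step, namely ruling out other maximizers and establishing existence and uniqueness of $y_\beta$ on all of $(0,\infty)$. This relies on monotonicity of $\mathscr R$ together with its limit $+\infty$, which follows from boundedness of $P'$ in the Legendre assumption (iii). The asymptotic expansions are then routine.
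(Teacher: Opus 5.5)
Your proposal is correct and follows essentially the same route as the paper. The shared steps are: reduction to the scalar problem via \cref{thm:legendre}, the sign analysis of $\beta z-y(z)=z\bigl(\beta-\mathscr R(y(z))\bigr)$ using strict monotonicity of $\mathscr R$ with limits $1/a$ and $+\infty$, Taylor inversion of $y_\beta=\beta P'(y_\beta)$, and the max-differentiation rule giving $\Phi'(\beta)=z_\beta^2/2$. Your explicit appeal to the endpoint argument from \cref{thm:analytic-finiteness}, which places the maximizers in the interior as \cref{thm:legendre} requires, is a step the paper leaves implicit.
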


\begin{proof}
The energy \eqref{eq:quadratic-family} is convex, so abundance follows from
\cref{prop:abundance-criteria}.  Let the rotation interval be $[-r,r]$ and let
$y(z)=(P')^{-1}(z)$.  Since $P$ is even, $P'$ and $y(z)$ are odd.  Legendre
duality gives $\mathfrak h'(z)=-y(z)$, and therefore
\begin{equation}\label{eq:general-variational-derivative}
 \frac{d}{dz}\left(\mathfrak h(z)+\frac\beta2z^2\right)
 =\beta z-y(z)
 =z\left(\beta-\frac{y(z)}z\right),\qquad z\ne0.
\end{equation}
By hypothesis, $y(z)/z=\mathscr R(y(z))$ is strictly increasing on $(0,r)$.
Its lower endpoint limit is
\begin{equation*}
 \lim_{z\to0^+}\frac{y(z)}z=\frac1{P''(0)}=\frac1a,
\end{equation*}
and its upper endpoint limit is $+\infty$, since the rotation interval is
compact while $y(z)\to+\infty$ as $z\uparrow r$.

The sign pattern in \eqref{eq:general-variational-derivative} shows that zero
is the unique global maximizer for $\beta\leq1/a$.  For $\beta>1/a$, there is
a unique positive critical value $z_\beta$; the variational function increases
on $(0,z_\beta)$ and decreases on $(z_\beta,r)$, so its only global maximizers
are $\pm z_\beta$.  The critical equation is
$y_\beta=\beta z_\beta=\beta P'(y_\beta)$.  The reduction theorem
\cref{thm:legendre} and uniqueness of the linear equilibrium pairs prove
parts~\textup{(i)}--\textup{(ii)}.

Put $\delta=\beta-1/a$.  Dividing
\eqref{eq:response-taylor-jet} by $y$ in the self-consistency equation gives
\[
 1=\beta\left(a-by_\beta^{2k}+O(y_\beta^{2k+2})\right).
\]
Consequently,
\begin{equation*}
 y_\beta^{2k}
 \sim\frac{a^2}{b}\,\delta.
\end{equation*}
This proves \eqref{eq:general-dual-exponent}; multiplying by the leading
response coefficient $a$ proves \eqref{eq:general-order-exponent}.

At a nonzero maximizer, Legendre duality and $y_\beta=\beta z_\beta$ give
\[
 \mathfrak h(z_\beta)+\frac\beta2z_\beta^2
 =P(y_\beta)-\frac\beta2z_\beta^2,
\]
which proves \eqref{eq:general-nonlinear-pressure}. Both maximizing values
$\pm z_\beta$ have the same square. Applying
\eqref{eq:maxima-one-sided-derivatives} with $K=[-r,r]$,
$a(z)=\mathfrak h(z)$ and $b(z)=z^2/2$ therefore gives
\begin{equation*}
 \Phi'(\beta)=\frac12z_\beta^2
 \qquad(\beta>\beta_c),
\end{equation*}
whereas $\Phi'=0$ below the transition.  Equation
\eqref{eq:general-order-exponent} proves
\eqref{eq:general-pressure-derivative-singularity}; integration proves
\eqref{eq:general-pressure-singularity}.  Thus $\Phi'$ is continuous.  If
$k=1$, its one-sided derivatives at $\beta_c$ are $0$ and $A$; if
$k>1$, its right derivative diverges.  In either case $\Phi$ is not $C^2$ at
the transition.
\end{proof}

\begin{remark}
The integer $k$ is determined by the first nonlinear term in the odd response
$P'$.  The complete two-state model has $k=1$ and the classical exponent
$1/2$, while the constrained three-state model has $k=2$ and exponent $1/4$.
Higher cancellations would produce the hierarchy $1/(2k)$.
\end{remark}

\subsection{Perturbations of the linear pressure}

\begin{proposition}[Stability of response monotonicity]
\label{prop:response-monotonicity-stability}
For an even $C^2$ strictly convex function $P$ on $\R$, set
\begin{equation*}
 \mathcal N_P(y):=P'(y)-yP''(y),\qquad y>0.
\end{equation*}
Fix $k\geq1$ and put
\[
 \omega_k(y):=\frac{y^{2k+1}}{1+y^{2k+1}}.
\]
Suppose that an even $C^2$ strictly convex function $P_0$ satisfies
$P_0'(0)=0$ and, for some $m>0$,
\begin{equation}\label{eq:response-quantitative-margin}
 \mathcal N_{P_0}(y)\geq m\omega_k(y)\qquad(y>0).
\end{equation}
If another even $C^2$ strictly convex function $P$ satisfies $P'(0)=0$ and
\begin{equation}\label{eq:response-weighted-perturbation}
 \sup_{y>0}
 \frac{|\mathcal N_P(y)-\mathcal N_{P_0}(y)|}{\omega_k(y)}<m,
\end{equation}
then $y/P'(y)$ is strictly increasing on $(0,\infty)$. Thus response monotonicity is preserved under sufficiently small weighted perturbations of any reference pressure satisfying \eqref{eq:response-quantitative-margin}.
\end{proposition}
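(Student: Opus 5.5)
The plan rests on the identity
\[
 \frac{d}{dy}\frac{y}{P'(y)}=\frac{P'(y)-yP''(y)}{P'(y)^2}=\frac{\mathcal N_P(y)}{P'(y)^2},\qquad y>0.
\]
This identity is meaningful once we know $P'(y)>0$ for $y>0$. That positivity follows from strict convexity and $P'(0)=0$: $P'$ is strictly increasing, so $P'(y)>P'(0)=0$. Since $P\in C^2$, the ratio $y/P'(y)$ is $C^1$ on $(0,\infty)$. Hence strict monotonicity will follow once we show $\mathcal N_P(y)>0$ for every $y>0$.

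The key step is the direct comparison. Set
\[
 \theta:=\sup_{y>0}\frac{|\mathcal N_P(y)-\mathcal N_{P_0}(y)|}{\omega_k(y)}<m,
\]
which is the hypothesis \eqref{eq:response-weighted-perturbation}. For every $y>0$ we have $\omega_k(y)>0$. Combining this with \eqref{eq:response-quantitative-margin} gives
\[
 \mathcal N_P(y)\geq\mathcal N_{P_0}(y)-\theta\,\omega_k(y)\geq (m-\theta)\,\omega_k(y)>0.
\]
Therefore the derivative of $y/P'(y)$ is strictly positive on $(0,\infty)$. By the mean value theorem, $y/P'(y)$ is then strictly increasing on $(0,\infty)$.

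I expect no genuine obstacle. The only points to check are that the supremum in \eqref{eq:response-weighted-perturbation} is strictly less than $m$, and that it is attained as a number $\theta$, so that the margin $m-\theta$ is uniform. Both hold by the hypothesis as stated. It is also worth remarking why the weight $\omega_k$ is used. Near $0$, $\mathcal N_{P_0}$ vanishes to order $2k+1$: for $P_0'(y)=ay-by^{2k+1}+\cdots$ one gets $\mathcal N_{P_0}(y)=2k\,b\,y^{2k+1}+\cdots$. The weight $\omega_k$ matches this order near $0$ and stays bounded at infinity, so the perturbation condition is natural. This remark is not needed for the proof.
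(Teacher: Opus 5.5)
Your proof is correct and follows the paper's argument exactly: $P'>0$ on $(0,\infty)$ by strict convexity and $P'(0)=0$, the identity $\bigl(y/P'(y)\bigr)'=\mathcal N_P(y)/P'(y)^2$, and the bound $\mathcal N_P(y)\geq(m-\theta)\,\omega_k(y)>0$. One small wording correction: the argument only needs the supremum $\theta$ to be a finite number with $\theta<m$, which is the hypothesis; it does not need to be attained.
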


\begin{proof}
Strict convexity and $P'(0)=0$ give $P'(y)>0$ for $y>0$.  Direct
differentiation gives
\begin{equation*}
 \left(\frac{y}{P'(y)}\right)'
 =\frac{\mathcal N_P(y)}{P'(y)^2}.
\end{equation*}
Equations \eqref{eq:response-quantitative-margin} and
\eqref{eq:response-weighted-perturbation} imply
$\mathcal N_P(y)>0$ for every $y>0$, proving the claim.
\end{proof}

\begin{corollary}[Taylor coefficients and critical exponents]
\label{cor:taylor-stratification}
Let $\Theta\subset\R^q$ be open, and let
$\{(T_\theta,\psi_\theta):\theta\in\Theta\}$ be a family such that each
$T_\theta$ is a forward expansive correspondence on a compact metric space
$X_\theta$ and
$\psi_\theta\in C(\Gamma_{T_\theta},\R)$.  Assume that every
$(T_\theta,\psi_\theta)$ satisfies the $C^\omega$ Legendre assumptions,
has unique linear equilibrium pairs, and has a rotation interval symmetric
about $0$. Suppose
that their linear pressures
$P_\theta$ are even, depend
real analytically on $(\theta,y)$ near $\Theta\times\{0\}$, and satisfy the
condition that $y/P_\theta'(y)$ is strictly increasing for $y>0$.  Write
\begin{equation}\label{eq:perturbed-response-series}
 P_\theta'(y)
 =a(\theta)y+\sum_{j=1}^{\infty}c_j(\theta)y^{2j+1},
 \qquad a(\theta)=P_\theta''(0)>0,
\end{equation}
near $y=0$, and, for $k\in\N$, define
\begin{equation*}
 \mathcal S_k
 :=\{\theta\in\Theta:
 c_1(\theta)=\cdots=c_{k-1}(\theta)=0,
 \quad c_k(\theta)<0\}.
\end{equation*}
For $k=1$, there are no vanishing-coefficient conditions.
For the quadratic energy
$\cE_{\beta,\theta}(\gamma)
=\beta(\int\psi_\theta\,d\gamma)^2/2$, the following hold.
\begin{enumerate}[label=\textup{(\roman*)}]
\item On $\mathcal S_k$, the positive equilibrium value has critical exponent
$1/(2k)$, with
\begin{align*}
 \beta_c(\theta)&=\frac1{a(\theta)},
 \\
 z_{\beta,\theta}
 &\sim C(\theta)
   (\beta-\beta_c(\theta))^{1/(2k)},
 \qquad
 C(\theta)
 :=a(\theta)
 \left(\frac{a(\theta)^2}{-c_k(\theta)}\right)^{1/(2k)}.
\end{align*}
Here $z_{\beta,\theta}>0$ is the positive equilibrium value, and the
asymptotic relation holds as $\beta\to\beta_c(\theta)^+$ for fixed $\theta$.
The formulas for $\beta_c$ and $C$ are restrictions of real-analytic
functions on the open set where $a>0$ and $c_k<0$.
In particular, the critical exponent of the positive equilibrium value is
constant on $\mathcal S_k$.
\item If a perturbation of a point in $\mathcal S_k$ creates a first
nonzero coefficient $c_j<0$ with $j<k$, then the critical exponent of the
positive equilibrium value becomes $1/(2j)$. Thus the exponent $1/(2k)$
requires the vanishing of the lower odd Taylor coefficients.
\end{enumerate}
\end{corollary}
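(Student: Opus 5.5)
The plan is to reduce both parts to \cref{thm:taylor-critical-exponent}, applied to each fixed $\theta$. That theorem's hypotheses are assumed pointwise here: the $C^\omega$ Legendre assumptions, unique linear equilibrium pairs, a symmetric rotation interval, $P_\theta$ even, and $y/P_\theta'(y)$ strictly increasing. The only input that remains is the Taylor jet \eqref{eq:response-taylor-jet}.

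For part (i), fix $\theta\in\mathcal S_k$. Since $P_\theta$ is even and real analytic in $y$ near $0$, $P_\theta'$ is odd with the convergent series \eqref{eq:perturbed-response-series}. The conditions $c_1(\theta)=\dots=c_{k-1}(\theta)=0$ then give
\[
 P_\theta'(y)=a(\theta)y+c_k(\theta)y^{2k+1}+O(y^{2k+3}).
\]
The remainder bound holds because the tail $\sum_{j>k}c_j(\theta)y^{2j+1}$ is $y^{2k+3}$ times a convergent power series. This is exactly \eqref{eq:response-taylor-jet} with $a=a(\theta)>0$ and $b=-c_k(\theta)>0$. \cref{thm:taylor-critical-exponent}(iii) then gives $\beta_c(\theta)=1/a(\theta)$ and $z_{\beta,\theta}\sim C(\theta)(\beta-\beta_c(\theta))^{1/(2k)}$, with the stated $C(\theta)$. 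Note that the quadratic energy here is $\cE_\beta$ of \eqref{eq:quadratic-family}.

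For the analyticity claim, I would argue that the coefficients are analytic in $\theta$. Joint real analyticity of $P_\theta$ in $(\theta,y)$ near $\Theta\times\{0\}$ makes $a(\theta)=\partial_y^2P_\theta(0)$ and $c_j(\theta)=\partial_y^{2j+2}P_\theta(0)/(2j+1)!$ real analytic functions of $\theta$, because derivatives of a jointly analytic function in one variable are analytic in the remaining variables. On the open set $\{a>0,\ c_k<0\}$, the maps $1/a$ and $a\,(a^2/(-c_k))^{1/(2k)}$ are compositions of analytic functions with $t\mapsto t^{1/(2k)}$, which is analytic on $(0,\infty)$. The exponent $1/(2k)$ does not depend on $\theta$, so it is constant on $\mathcal S_k$.

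For part (ii), I would interpret ``a perturbation creates a first nonzero coefficient $c_j<0$ with $j<k$'' as saying the perturbed parameter $\theta'$ satisfies $c_1(\theta')=\dots=c_{j-1}(\theta')=0$ and $c_j(\theta')<0$, i.e.\ $\theta'\in\mathcal S_j$. Part (i) applied with $j$ in place of $k$ then gives exponent $1/(2j)\neq1/(2k)$. Conversely, since the strata $\mathcal S_j$ are disjoint and the exponent on $\mathcal S_j$ is $1/(2j)$, an exponent of $1/(2k)$ forces $\theta\in\mathcal S_k$, hence the vanishing of $c_1,\dots,c_{k-1}$. The hypotheses of the family (monotone response, Legendre assumptions) hold at every $\theta$, so no extra verification is needed at $\theta'$.

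The main obstacle is mostly bookkeeping: making sure the $O(y^{2k+3})$ remainder is uniform enough for \cref{thm:taylor-critical-exponent}. For fixed $\theta$ only pointwise asymptotics in $y$ are needed, and convergence of the series on some disc around $0$ supplies this. I would also note that $\mathcal S_j$ is not open for $j\geq2$, which is why analyticity is stated on the open set $\{a>0,\ c_k<0\}$ and then restricted to $\mathcal S_k$.
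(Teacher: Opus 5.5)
Your treatment of part (i), and of part (ii) with $c_j<0$ given, matches the paper's proof. You identify $b=-c_k(\theta)$ in \eqref{eq:response-taylor-jet}, apply \cref{thm:taylor-critical-exponent} at each fixed $\theta$, and get analyticity from $a>0$ and $-c_k>0$.

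The gap is in your converse for the last sentence of (ii), that exponent $1/(2k)$ requires $c_1=\cdots=c_{k-1}=0$. You argue that since the strata are disjoint and $\mathcal S_j$ carries exponent $1/(2j)$, exponent $1/(2k)$ forces $\theta\in\mathcal S_k$. That only works for parameters that lie in some stratum. Suppose the first nonzero nonlinear coefficient is $c_j(\theta)>0$ with $j<k$. Then $\theta$ lies in no $\mathcal S_i$, disjointness says nothing about it, and the vanishing of the lower coefficients is not forced.

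The missing idea, which the paper's proof supplies, is that response monotonicity fixes the sign. If $c_j(\theta)$ is the first nonzero nonlinear coefficient, then
\[
 \mathcal N_{P_\theta}(y)=P_\theta'(y)-yP_\theta''(y)=-2jc_j(\theta)y^{2j+1}+O(y^{2j+3}).
\]
Moreover, $\bigl(y/P_\theta'(y)\bigr)'=\mathcal N_{P_\theta}(y)/P_\theta'(y)^2\geq0$ on $(0,\infty)$, so $c_j(\theta)\le 0$, hence $c_j(\theta)<0$. The same hypothesis rules out $c_j(\theta)=0$ for every $j$, because $y/P_\theta'(y)$ would then be constant near $0$. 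So every $\theta\in\Theta$ lies in exactly one stratum, and your disjointness argument goes through. This also shows that the sign condition $c_j<0$ in part (ii) is automatic rather than an extra assumption.
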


\begin{proof}
For $\theta\in\mathcal S_k$, equation
\eqref{eq:perturbed-response-series} has the form
\[
 P_\theta'(y)
 =a(\theta)y-[-c_k(\theta)]y^{2k+1}
  +O(y^{2k+3}).
\]
Part~\textup{(i)} follows from
\cref{thm:taylor-critical-exponent}; analytic dependence follows because
$a>0$ and $-c_k>0$ on $\mathcal S_k$.  More generally, if $c_j$ is the first
nonzero nonlinear coefficient, then
\[
 \mathcal N_{P_\theta}(y)
 =-2j c_j(\theta)y^{2j+1}+O(y^{2j+3}).
\]
Response monotonicity forces $c_j(\theta)<0$.  Applying
\cref{thm:taylor-critical-exponent} with $j$ in place of $k$ proves
part~\textup{(ii)}.
\end{proof}

\begin{remark}
The weight in \eqref{eq:response-weighted-perturbation} behaves as
$y^{2k+1}$ near zero. A perturbation introducing a term $c_jy^{2j+1}$
with $j<k$ therefore makes the quotient in that estimate unbounded near
zero. The monotonicity estimate and the change of exponent in
\cref{cor:taylor-stratification} concern different perturbations.
The exponent is $1/2$ when the first nonlinear coefficient is cubic and
negative; the constrained three-state example has $c_1=0$ and $c_2<0$,
giving exponent $1/4$.
\end{remark}

\begin{remark}
The quantitative margin is satisfied by both continuous-transition examples
below.  For the complete two-state response $P'(y)=\tanh y$,
\[
 \mathcal N_P(y)=\tanh y-y\operatorname{sech}^2y
\]
is positive, has leading term $2y^3/3$, and tends to $1$ at infinity.  For
the constrained three-state response, positivity is exactly the calculation
in \eqref{eq:constrained-monotonicity}; its leading term has order $y^5$
and it also tends to $1$.  Consequently, $\mathcal N_P/\omega_k$ has a
positive infimum with $k=1$ and $k=2$, respectively. The corresponding
weighted perturbations preserve response monotonicity; if the remaining
Legendre and symmetry assumptions persist, the classification in
\cref{thm:taylor-critical-exponent} still applies.
\end{remark}

\begin{remark}
For a fixed primitive finite-state adjacency matrix, a real-analytic family of
two-coordinate observables produces the weighted matrix $B_{\theta,y}$
defined in \eqref{eq:weighted-matrix} below, with $\psi_\theta$ in place of
$\boldsymbol\psi$. Its Perron eigenvalue is simple and real analytic in
$(\theta,y)$.  Hence
$P_\theta(y)=\log\rho(B_{\theta,y})$ has the parameter regularity required in
\cref{cor:taylor-stratification}.  Thus the analytic dependence of pressure required by the corollary is
available for such families; its other assumptions, including symmetry,
Legendre regularity, and response monotonicity, must also hold.  Changing the zero pattern of
the adjacency matrix is a discrete graph perturbation rather than a small
analytic one; such a change may alter the first nonzero nonlinear Taylor coefficient.
\end{remark}

\section{Finite-state correspondences}\label{sec:finite}

\subsection{The Perron--Frobenius formulas}

Let $X=\{1,\ldots,m\}$ and let $A=(a_{ij})$ be a zero-one matrix without zero
rows.  It defines the correspondence
\[
 T_A(i):=\{j:a_{ij}=1\}.
\]
In this finite-state setting, $A$ is the adjacency matrix of the directed
graph with vertex set $X$ and edge set $\Gamma_{T_A}$.  Thus graph-theoretic
terms such as an edge or a forbidden edge refer here to an ordered pair
$(i,j)$ with $a_{ij}=1$ or $a_{ij}=0$, respectively.
A transition matrix $Q$ supported by $A$ has nonnegative entries,
$\sum_{j=1}^mQ(i,j)=1$, and $Q(i,j)=0$ if $a_{ij}=0$.
A probability vector $\mu$ is stationary if
$\sum_{i=1}^m\mu(i)Q(i,j)=\mu(j)$ for every $j$.
The two-coordinate distribution is then
$\gamma_{\mu,Q}(i,j)=\mu(i)Q(i,j)$.
Assume that $A$ is \emph{primitive}, meaning that some positive integer
power of $A$ has every entry strictly positive.  The use of finite-type models
to analyze entropy for Markov set-valued systems is also developed in
\cite{alvin-kelly-markov-set-valued}.  Given a two-coordinate observable
$\boldsymbol\psi:\Gamma_{T_A}\to\R^d$, define
\begin{equation}\label{eq:weighted-matrix}
 B_{\boldsymbol y}(i,j)
 :=\begin{cases}
 \exp(\boldsymbol y\cdot\boldsymbol\psi(i,j)),&a_{ij}=1,\\
 0,&a_{ij}=0.
 \end{cases}
\end{equation}
Here $\rho(B)$ denotes the spectral radius of a matrix $B$.  Primitivity makes
$T_A$ forward expansive and gives the orbit shift the specification property.
Thus every continuous energy considered in this section satisfies the
abundance condition by \cref{cor:specification-abundance}.

For a stationary pair $(\mu,Q)$, the entropy formula is
\[
 h_\mu(Q)=H(\gamma_{\mu,Q})-H(\mu)
 =-\sum_{i,j}\gamma_{\mu,Q}(i,j)\log\gamma_{\mu,Q}(i,j)
   +\sum_i\mu(i)\log\mu(i).
\]
Indeed, the entropy chain rule gives
$H(\mu Q^{[n-1]})=H(\mu)+(n-1)(H(\gamma_{\mu,Q})-H(\mu))$;
dividing by $n$ and taking the limit gives the formula.
Thus $h_\mu(Q)$ is continuous as a function of the two-coordinate
distribution. Such distributions form the compact set of nonnegative
matrices supported by $A$, with total mass one and equal row and column
sums. Maximizing this continuous entropy on the fibres of
$\gamma\mapsto\sum_{(i,j)\in\Gamma_{T_A}}\gamma(i,j)\boldsymbol\psi(i,j)$ shows that
$\mathfrak h$ is upper semicontinuous on the entire rotation set.

\begin{proposition}\label{prop:finite-pf}
For the finite-state correspondence above,
\begin{equation*}
 P(\boldsymbol y)=\log\rho(B_{\boldsymbol y}).
\end{equation*}
Let $r_{\boldsymbol y}$ and $\ell_{\boldsymbol y}$ be positive right and left
Perron--Frobenius eigenvectors, normalized by
$\ell_{\boldsymbol y}\cdot r_{\boldsymbol y}=1$.  The unique linear equilibrium
pair is
\begin{align}
 Q_{\boldsymbol y}(i,j)
  &=\frac{B_{\boldsymbol y}(i,j)r_{\boldsymbol y}(j)}
          {\rho(B_{\boldsymbol y})r_{\boldsymbol y}(i)},\label{eq:finite-kernel}\\
 \mu_{\boldsymbol y}(i)
  &=\ell_{\boldsymbol y}(i)r_{\boldsymbol y}(i).\notag
\end{align}
Moreover,
\begin{equation}\label{eq:finite-gradient}
 \nabla P(\boldsymbol y)
 =\sum_{(i,j)\in\Gamma_{T_A}}\mu_{\boldsymbol y}(i)Q_{\boldsymbol y}(i,j)
    \boldsymbol\psi(i,j).
\end{equation}
If, in addition, $(T_A,\boldsymbol\psi)$ satisfies the $C^1$ Legendre assumptions and $F\in C^1(U)$ on an open neighborhood of
the convex hull of $\boldsymbol\psi(\Gamma_{T_A})$, and if the maximizing set
of $\mathfrak h+F$ is contained in
$\intt\mathcal R(\boldsymbol\psi)$, then the nonlinear
equilibrium pairs for
$\cE(\gamma)=F(\int\boldsymbol\psi\,d\gamma)$ are obtained from the
solutions of
\begin{equation}\label{eq:finite-self-consistency}
 \boldsymbol y
 =\nabla F\bigl(\nabla\log\rho(B_{\boldsymbol y})\bigr).
\end{equation}
Here one retains exactly those solutions for which $\nabla P(\boldsymbol y)$
globally maximizes $\mathfrak h+F$ on $\mathcal R(\boldsymbol\psi)$.
\end{proposition}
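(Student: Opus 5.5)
The plan has four steps: identify the pressure, show the Parry-type pair is the unique linear equilibrium, read off the gradient by Perron--Frobenius perturbation, and apply \cref{thm:legendre}.

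\textbf{Step 1: $P(\boldsymbol y)=\log\rho(B_{\boldsymbol y})$.} Since primitivity gives forward expansivity, I would compute the linear pressure directly from \cref{def:nonlinear-pressure} with the affine energy $\gamma\mapsto\int\boldsymbol y\cdot\boldsymbol\psi\,d\gamma$. At any scale below the minimal distance between distinct states, $(n,\eps)$-separated sets are simply sets of distinct admissible words of length $n+1$. The maximal partition sum is therefore
\[
 \sum_{i,j}(B_{\boldsymbol y}^n)(i,j)=\mathbf 1^{\top}B_{\boldsymbol y}^n\mathbf 1 .
\]
Primitivity and the Perron--Frobenius theorem give $\frac1n\log\mathbf 1^{\top}B_{\boldsymbol y}^n\mathbf 1\to\log\rho(B_{\boldsymbol y})$. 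By \cref{thm:vp} this value is also the variational supremum in \eqref{eq:linear-pressure-function}.

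\textbf{Step 2: the pair $(\mu_{\boldsymbol y},Q_{\boldsymbol y})$ is the unique linear equilibrium.} First, $Q_{\boldsymbol y}$ is stochastic because $B_{\boldsymbol y}r=\rho r$. Next, $\mu_{\boldsymbol y}Q_{\boldsymbol y}=\mu_{\boldsymbol y}$ because $\ell B_{\boldsymbol y}=\rho\ell$. It is supported on $A$ by construction.

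For an arbitrary stationary pair with two-coordinate distribution $\gamma$, I would use the finite entropy formula $h_\mu(Q)=-\sum_{i,j}\gamma(i,j)\log Q(i,j)$. Writing
\[
 \boldsymbol y\cdot\boldsymbol\psi(i,j)=\log\rho+\log Q_{\boldsymbol y}(i,j)+\log r(i)-\log r(j),
\]
and noting that the coboundary term integrates to zero against $\gamma$ (equal marginals), I get
\[
 h_\mu(Q)+\int\boldsymbol y\cdot\boldsymbol\psi\,d\gamma=\log\rho-\sum_i\mu(i)\,\mathrm{KL}\bigl(Q(i,\cdot)\,\|\,Q_{\boldsymbol y}(i,\cdot)\bigr)\le\log\rho .
\]
Equality forces $Q(i,\cdot)=Q_{\boldsymbol y}(i,\cdot)$ for $\mu$-a.e.\ $i$. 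The main obstacle is the states outside $\supp\mu$. Because kernels are only identified $\mu$-almost everywhere, I plan to argue that $\supp\mu$ is closed under $Q_{\boldsymbol y}$. Since $Q_{\boldsymbol y}$ is primitive (it has the zero pattern of $A$), this closure forces $\supp\mu=X$, hence $\mu=\mu_{\boldsymbol y}$ by uniqueness of the stationary vector of a primitive chain. This gives uniqueness.

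\textbf{Step 3: the gradient formula \eqref{eq:finite-gradient}.} Because the Perron eigenvalue is simple, $\rho(B_{\boldsymbol y})$ is real analytic in $\boldsymbol y$. The standard perturbation formula gives
\[
 \partial_k\rho=\ell\,(\partial_kB_{\boldsymbol y})\,r=\sum_{i,j}\ell(i)B_{\boldsymbol y}(i,j)\psi_k(i,j)r(j).
\]
Dividing by $\rho$ and substituting \eqref{eq:finite-kernel} yields $\sum_{i,j}\mu_{\boldsymbol y}(i)Q_{\boldsymbol y}(i,j)\psi_k(i,j)$, which is \eqref{eq:finite-gradient}. Alternatively, this follows from the subgradient argument in the proof of \cref{thm:legendre}.

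\textbf{Step 4: the self-consistency statement.} Under the stated assumptions, \cref{thm:legendre} applies. It shows that the nonlinear equilibrium pairs are exactly the pairs $\Eq(T_A,\boldsymbol y\cdot\boldsymbol\psi)$ with $\boldsymbol y\in(\nabla P)^{-1}(V)$, and each such $\boldsymbol y$ satisfies $\boldsymbol y=\nabla F(\nabla P(\boldsymbol y))$, which is \eqref{eq:finite-self-consistency} once Step 1 is inserted. Conversely, a solution $\boldsymbol y$ with $\nabla P(\boldsymbol y)\in V$ lies in $Y$, and by Step 2 it contributes exactly the single pair $(\mu_{\boldsymbol y},Q_{\boldsymbol y})$.
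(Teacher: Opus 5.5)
Your proposal is correct and follows the same route as the paper: you compute the pressure from the path sum $\mathbf 1^{\mathsf T}B_{\boldsymbol y}^n\mathbf 1$, obtain $\nabla P$ from the simple-eigenvalue perturbation formula, and get the self-consistency statement from \cref{thm:legendre}. The one difference is Step~2, where the identity $h_\mu(Q)+\int\boldsymbol y\cdot\boldsymbol\psi\,d\gamma=\log\rho(B_{\boldsymbol y})-\sum_i\mu(i)\,D\bigl(Q(i,\cdot)\|Q_{\boldsymbol y}(i,\cdot)\bigr)$ and your support-closure argument prove that $(\mu_{\boldsymbol y},Q_{\boldsymbol y})$ attains $P(\boldsymbol y)$ and is the unique maximizer, whereas the paper only checks stochasticity and stationarity and asserts uniqueness from positivity and primitivity.
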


\begin{proof}
The proof follows the weighted subshift calculation; compare
\cite[Lemma~3.10]{li-li-zhang-correspondences} and
\cite[Proposition~3.14]{li-li-zhang-correspondences}.  For a
path $(i_0,\ldots,i_n)$, its linear weight is the product
$\prod_{k=0}^{n-1}B_{\boldsymbol y}(i_k,i_{k+1})$.  If $\boldsymbol 1$ is the
all-ones vector, summing over all paths gives
$\boldsymbol 1^{\mathsf T}B_{\boldsymbol y}^n\boldsymbol 1$; the
Perron--Frobenius theorem therefore yields
$P(\boldsymbol y)=\log\rho(B_{\boldsymbol y})$.

Write $\lambda=\rho(B_{\boldsymbol y})$.  The right eigenvector equation
$B_{\boldsymbol y}r_{\boldsymbol y}=\lambda r_{\boldsymbol y}$ shows that
the rows in \eqref{eq:finite-kernel} sum to one.  The left eigenvector equation
gives
\[
 \sum_{i=1}^m\mu_{\boldsymbol y}(i)Q_{\boldsymbol y}(i,j)
 =\frac{r_{\boldsymbol y}(j)}{\lambda}
   \sum_{i=1}^m\ell_{\boldsymbol y}(i)B_{\boldsymbol y}(i,j)
 =\ell_{\boldsymbol y}(j)r_{\boldsymbol y}(j)
 =\mu_{\boldsymbol y}(j),
\]
so $\mu_{\boldsymbol y}$ is stationary.  Positivity and primitivity give the
uniqueness of the corresponding linear equilibrium pair.

For the $q$th coordinate of $\boldsymbol y$, differentiation of the simple
Perron eigenvalue, using
$\ell_{\boldsymbol y}\cdot r_{\boldsymbol y}=1$, gives
\begin{align*}
 \partial_{y_q}P(\boldsymbol y)
 &=\frac{1}{\lambda}
   \ell_{\boldsymbol y}^{\mathsf T}
   (\partial_{y_q}B_{\boldsymbol y})r_{\boldsymbol y}\\
 &=\sum_{(i,j)\in\Gamma_{T_A}}\mu_{\boldsymbol y}(i)Q_{\boldsymbol y}(i,j)
   \psi_q(i,j).
\end{align*}
This proves \eqref{eq:finite-gradient}.  Under the additional hypotheses in
the last part of the statement, \cref{thm:legendre} gives
\eqref{eq:finite-self-consistency}; only solutions whose rotation vectors
maximize $\mathfrak h+F$ give nonlinear equilibrium pairs.
\end{proof}

\subsection{A two-state model}

\begin{proposition}
\label{prop:two-state-transition}
Let $X=\{-1,+1\}$, let $T(s)=X$ for both $s\in X$, and set
$\psi(s,t)=t$.  For $\beta\geq0$, consider the mean-field energy
\begin{equation*}
 \cE_\beta(\gamma)
 :=\frac{\beta}{2}\left(\int_{X^2}\psi\,d\gamma\right)^2.
\end{equation*}
Then the critical inverse temperature is $\beta_c=1$, and the nonlinear
equilibrium pairs are classified as follows.
\begin{enumerate}[label=\textup{(\roman*)}]
\item If $0\leq\beta\leq1$, the unique equilibrium pair is
\[
 \mu_0(s)=\frac12,
 \qquad Q_0(s,t)=\frac12.
\]
\item If $\beta>1$, there are exactly two equilibrium pairs.  Let
$m_\beta\in(0,1)$ be the unique positive solution of
\begin{equation}\label{eq:curie-weiss-equation}
 m_\beta=\tanh(\beta m_\beta).
\end{equation}
For $\sigma\in\{-1,+1\}$, define
\begin{equation*}
 p_{\sigma m_\beta}(t):=\frac{1+\sigma m_\beta t}{2}.
\end{equation*}
The two equilibrium pairs and their two-coordinate distributions are
\begin{equation}\label{eq:two-state-equilibria}
 \mu^\sigma_\beta(s)=p_{\sigma m_\beta}(s),
 \qquad
 Q^\sigma_\beta(s,t)=p_{\sigma m_\beta}(t),
 \qquad
 \gamma^\sigma_\beta(s,t)
 =p_{\sigma m_\beta}(s)p_{\sigma m_\beta}(t).
\end{equation}
\end{enumerate}
The nonlinear pressure is
\begin{equation}\label{eq:two-state-pressure}
 \Ptop^{\cE_\beta}(T)
 =\begin{cases}
   \log 2,&0\leq\beta\leq1,\\
   \log\bigl(2\cosh(\beta m_\beta)\bigr)
      -\dfrac{\beta}{2}m_\beta^2,&\beta>1.
  \end{cases}
\end{equation}
In particular, the model undergoes a continuous symmetry-breaking transition
from a unique equilibrium pair to two distinct equilibrium pairs.
\end{proposition}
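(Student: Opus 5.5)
The plan is to derive this as a direct instance of \cref{thm:taylor-critical-exponent}, with the explicit pair formulas taken from \cref{prop:finite-pf}.

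\textbf{Linear pressure and linear equilibria.} Here $A$ is the all-ones $2\times2$ matrix, which is primitive. The weighted matrix \eqref{eq:weighted-matrix} is
\[
B_y(s,t)=e^{yt},
\]
a rank-one matrix whose two rows are both $(e^{-y},e^{y})$. Its eigenvectors are explicit.
\begin{itemize}[label=\textup{--}]
\item The right Perron vector is $r_y=(1,1)$, with eigenvalue $\rho(B_y)=e^y+e^{-y}=2\cosh y$. Hence $P(y)=\log(2\cosh y)$ and $P'(y)=\tanh y$.
\item The left Perron vector is proportional to $(e^{-y},e^{y})$.
\item Formula \eqref{eq:finite-kernel} gives
\[
Q_y(s,t)=\frac{e^{yt}}{2\cosh y}=\frac{1+t\tanh y}{2},
\qquad
\mu_y(s)=\frac{1+s\tanh y}{2}.
\]
\end{itemize}
\cref{prop:finite-pf} also gives uniqueness of these linear equilibrium pairs.

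\textbf{Localized entropy.} Since $\psi(s,t)=t$ and the marginals of $\gamma_{\mu,Q}$ agree, the rotation vector of $(\mu,Q)$ is the mean of $\mu$. The rotation set is therefore $[-1,1]$. For a stationary pair with mean $z$, the finite-state entropy formula gives $h_\mu(Q)=H(\gamma_{\mu,Q})-H(\mu)$. This is a conditional entropy, so it is at most $H(\mu)=H\bigl(\tfrac{1+z}2,\tfrac{1-z}2\bigr)$. Equality holds for the i.i.d.\ pair $Q_x=\mu$. Hence $\mathfrak h(z)$ is the binary entropy of $\tfrac{1+z}{2}$.

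\textbf{Legendre assumptions.} With this formula the $C^\omega$ Legendre assumptions of \cref{def6.2} can be checked directly.
\begin{itemize}[label=\textup{--}]
\item $\mathfrak h$ is continuous on $[-1,1]$ and strictly concave and analytic on $(-1,1)$.
\item $P$ is even, analytic and strictly convex, and $P'=\tanh$ is an analytic diffeomorphism $\R\to(-1,1)$.
\item The Legendre identity $\inf_y(\log(2\cosh y)-yz)=\mathfrak h(z)$ follows from the elementary computation with minimizer $y=\operatorname{artanh} z$. At $z=\pm1$ both sides equal $0$, as a limit.
\end{itemize}
I expect this verification of item~(iv) and of upper semicontinuity at the endpoints to be the only step requiring care. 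Even there it reduces to calculus once $\mathfrak h$ is identified by the conditional-entropy bound above.

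\textbf{Taylor hypotheses.} The rotation interval is symmetric and $P$ is even. Expanding $\tanh y=y-\tfrac13y^3+O(y^5)$ gives $a=1$, $b=\tfrac13$, $k=1$. For response monotonicity, $(y/\tanh y)'=\mathcal N_P(y)/\tanh^2y$ with $\mathcal N_P(y)=\tanh y-y\operatorname{sech}^2 y$. Writing $\tanh y-y\operatorname{sech}^2y=\operatorname{sech}^2y\,(\tfrac12\sinh 2y-y)$ shows $\mathcal N_P(y)>0$ for $y>0$, as noted in the remark after \cref{cor:taylor-stratification}. Abundance holds because $\cE_\beta$ is convex, by \cref{prop:abundance-criteria}(ii), or alternatively by \cref{cor:specification-abundance}.

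\textbf{Applying the theorem.} \cref{thm:taylor-critical-exponent} now gives $\beta_c=1/a=1$.
\begin{itemize}[label=\textup{--}]
\item For $\beta\le1$, the unique equilibrium pair is the linear pair at $y=0$. This is $\mu_0=Q_0(s,\cdot)=\tfrac12$.
\item For $\beta>1$, there are exactly two equilibrium pairs, at $y=\pm y_\beta$ with $y_\beta=\beta\tanh y_\beta$. Set $m_\beta:=\tanh y_\beta=z_\beta\in(0,1)$. Then $y_\beta=\beta m_\beta$, so $m_\beta=\tanh(\beta m_\beta)$, which is \eqref{eq:curie-weiss-equation}. Uniqueness of the positive root comes from the theorem, or directly from concavity of $\tanh$ on $(0,\infty)$.
\item Substituting $\tanh(\pm y_\beta)=\pm m_\beta$ into the formulas for $Q_y$ and $\mu_y$ gives $Q^\sigma_\beta(s,t)=p_{\sigma m_\beta}(t)$ and $\mu^\sigma_\beta=p_{\sigma m_\beta}$. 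The product form of $\gamma^\sigma_\beta$ in \eqref{eq:two-state-equilibria} follows immediately.
\end{itemize}
Finally, \eqref{eq:general-nonlinear-pressure} with $P(0)=\log2$, $y_\beta=\beta m_\beta$ and $z_\beta=m_\beta$ yields \eqref{eq:two-state-pressure}. The continuity of the transition is part~(iv) of \cref{thm:taylor-critical-exponent}.
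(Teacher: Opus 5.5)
Your proof is correct, but it is organized differently from the paper's. The paper's proof does not go through \cref{thm:taylor-critical-exponent}; the remark after the proposition only notes that this model is its case $a=1$, $b=1/3$, $k=1$.

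Instead, the paper states that $\mathfrak h$ is the binary entropy, and observes that this also follows from Legendre duality with $y=\operatorname{artanh}z$. It then does the global comparison by hand: $g_\beta'(z)=z(\beta-r(z))$ with $r(z)=\operatorname{artanh}(z)/z$, which is strictly increasing from $1$ to $+\infty$ by the integral representation of $\operatorname{artanh}$. The pairs are read off from the Perron--Frobenius formulas at $y=\beta z=\operatorname{artanh}z$, and the pressure follows from Legendre duality.

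Your monotonicity of $y/\tanh y$ is the same fact as the monotonicity of $r$ under $z=\tanh y$, so the core calculus is shared. The difference is packaging. You verify the hypotheses of the general Taylor criterion, including an explicit check of the $C^\omega$ Legendre assumptions and of uniqueness of linear equilibria. That theorem then supplies the sign analysis, the exclusion of the endpoints, and the continuity of the transition.

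What your route buys:
\begin{itemize}
\item It makes explicit the Legendre regularity that the paper's direct proof uses tacitly when it identifies nonlinear pairs with linear pairs at $y=\beta z$.
\item Your conditional-entropy argument identifies $\mathfrak h$ without any appeal to duality: $h_\mu(Q)=H(X_1\mid X_0)\le H(X_1)=H(\mu)$ for $(X_0,X_1)\sim\gamma_{\mu,Q}$, with equality for the i.i.d.\ kernel $Q_x=\mu$.
\end{itemize}
What the paper's route buys: it is self-contained at the level of one-variable calculus. It shows concretely, including at $z=\pm1$, why $0$ or $\pm m_\beta$ are the only global maximizers, rather than inheriting this from a general theorem whose proof is essentially the same sign-pattern argument.
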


\begin{proof}
Order the two states as $-1,+1$.  The weighted matrix in
\eqref{eq:weighted-matrix} is
\begin{align*}
 B_y&=\begin{pmatrix}e^{-y}&e^y\\e^{-y}&e^y\end{pmatrix},\\
 P(y)&=\log\rho(B_y)=\log(2\cosh y),
 &P'(y)&=\tanh y.
\end{align*}
Thus the scalar rotation set is $[-1,1]$.  For $z\in[-1,1]$, its localized
entropy is the binary entropy
\begin{equation}\label{eq:two-state-localized-entropy}
 \mathfrak h(z)
 =-\frac{1+z}{2}\log\frac{1+z}{2}
  -\frac{1-z}{2}\log\frac{1-z}{2},
\end{equation}
where $0\log0:=0$.  This also follows directly from Legendre duality, since
the dual parameter is $y=\operatorname{artanh}z$.

It remains to make the global-maximizer comparison.  Put
\[
 g_\beta(z):=\mathfrak h(z)+\frac\beta2z^2.
\]
On $(0,1)$,
\begin{equation}\label{eq:two-state-derivative}
 g_\beta'(z)=\beta z-\operatorname{artanh}z
 =z\left(\beta-\frac{\operatorname{artanh}z}{z}\right).
\end{equation}
The function $r(z):=\operatorname{artanh}(z)/z$ is strictly increasing from
$1$ to $+\infty$.  Indeed,
\[
 r'(z)=\frac{z/(1-z^2)-\operatorname{artanh}z}{z^2}>0,
\]
because $\operatorname{artanh}z$ is the integral over $[0,z]$ of the strictly
increasing function $(1-t^2)^{-1}$.

If $0\leq\beta\leq1$, equation \eqref{eq:two-state-derivative} is negative
for every $z\in(0,1)$, so the even function $g_\beta$ has the unique global
maximizer $0$.  If $\beta>1$, there is a unique $m_\beta\in(0,1)$ such that
$r(m_\beta)=\beta$, which is equivalent to
\eqref{eq:curie-weiss-equation}.  Moreover, $g_\beta$ increases on
$(0,m_\beta)$ and decreases on $(m_\beta,1)$; by evenness its only global
maximizers are $\pm m_\beta$.  This proves the claimed global classification,
including the endpoints.

For a dual parameter $y$, the Perron--Frobenius formulas give
\[
 \pi_y(t):=\frac{e^{yt}}{2\cosh y},
 \qquad \mu_y(s)=\pi_y(s),
 \qquad Q_y(s,t)=\pi_y(t).
\]
At a maximizing value $z$, the self-consistency relation is
$y=\beta z=\operatorname{artanh}z$, and hence $\pi_y(t)=(1+zt)/2$.
This proves \eqref{eq:two-state-equilibria}.  Finally, the variational value is
$g_\beta(0)=\log2$ below the transition, while at $z=\pm m_\beta$ Legendre
duality gives
\[
 \mathfrak h(z)+\frac\beta2z^2
 =P(\beta z)-\frac\beta2z^2,
\]
which is \eqref{eq:two-state-pressure}.  Since
$\lim_{\beta\to1^+}m_\beta=0$ (monotonically as $\beta$ decreases), the transition is continuous.
\end{proof}

\begin{remark}
The scalar maximization in \cref{prop:two-state-transition} is the classical
Curie--Weiss variational problem considered in
\cite{buzzi-kloeckner-leplaideur-nonlinear}.  Here it is realized for a
finite-state correspondence, and \eqref{eq:two-state-equilibria} additionally identifies
the transition kernels, their stationary measures, and the resulting two-coordinate distributions.  Since $P'(y)=\tanh y=y-y^3/3+O(y^5)$, this is the case
$a=1$, $b=1/3$, and $k=1$ of \cref{thm:taylor-critical-exponent}.
\end{remark}

\subsection{An external field and metastable states}

\begin{proposition}
\label{prop:field-transition}
In the setting of \cref{prop:two-state-transition}, fix $\beta>1$ and, for
$h\in\R$, define the energy with external field $h$ by
\begin{equation*}
 \cE_{\beta,h}(\gamma)
 :=\frac{\beta}{2}\left(\int_{X^2}\psi\,d\gamma\right)^2
   +h\int_{X^2}\psi\,d\gamma.
\end{equation*}
Let $\Phi_\beta(h):=\Ptop^{\cE_{\beta,h}}(T)$.  Then:
\begin{enumerate}[label=\textup{(\roman*)}]
\item At $h=0$, the equilibrium values are $\pm m_\beta$, as in
\cref{prop:two-state-transition}.  For every $h\neq0$, there is a unique
equilibrium value $z_{\beta,h}$, it has the sign of $h$, and
\begin{equation*}
 z_{\beta,h}=\tanh(\beta z_{\beta,h}+h).
\end{equation*}
If $h>0$, then $z_{\beta,h}\in(m_\beta,1)$; if $h<0$, then
$z_{\beta,h}\in(-1,-m_\beta)$.
\item With $p_z(t)=(1+zt)/2$, the unique equilibrium pair for $h\neq0$ and
its two-coordinate distribution are
\begin{equation}\label{eq:external-field-pair}
 \mu_{\beta,h}(s)=p_{z_{\beta,h}}(s),\qquad
 Q_{\beta,h}(s,t)=p_{z_{\beta,h}}(t),\qquad
 \gamma_{\beta,h}(s,t)
 =p_{z_{\beta,h}}(s)p_{z_{\beta,h}}(t).
\end{equation}
\item The function $\Phi_\beta$ is even and convex, and is real analytic on
$(-\infty,0)$ and $(0,+\infty)$.  For $h\neq0$,
\begin{equation}\label{eq:external-field-pressure}
 \Phi_\beta(h)
 =\log\bigl(2\cosh(\beta z_{\beta,h}+h)\bigr)
  -\frac\beta2z_{\beta,h}^2.
\end{equation}
Its one-sided derivatives at the origin are
\begin{equation}\label{eq:first-order-jump}
 \Phi_\beta'(0-)=-m_\beta,
 \qquad
 \Phi_\beta'(0+)=m_\beta.
\end{equation}
Thus $h=0$ is a first-order transition and the equilibrium value jumps by
$2m_\beta$.
\item Set
\begin{equation*}
 s_\beta:=\sqrt{1-\frac1\beta},
 \qquad
 h_{\mathrm{sp}}(\beta)
 :=\beta s_\beta-\operatorname{artanh}s_\beta>0.
\end{equation*}
For $0<|h|<h_{\mathrm{sp}}(\beta)$, the function
$g_{\beta,h}(z)=\mathfrak h(z)+\beta z^2/2+hz$ on $[-1,1]$ has, in addition
to its global maximum, a strict local maximum of the opposite sign and a local
minimum separating the two.  At $|h|=h_{\mathrm{sp}}(\beta)$ the local maximum
and the local minimum coalesce; this is the \emph{spinodal threshold}.
For $|h|>h_{\mathrm{sp}}(\beta)$ the global maximizer is the only critical
point in $(-1,1)$.
\end{enumerate}
\end{proposition}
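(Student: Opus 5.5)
The plan is to reduce everything to the scalar function
\[
 g_{\beta,h}(z)=\mathfrak h(z)+\frac\beta2z^2+hz,\qquad z\in[-1,1],
\]
where $\mathfrak h$ is the binary entropy \eqref{eq:two-state-localized-entropy}. The energy $\cE_{\beta,h}$ has the form $F(\int\psi\,d\gamma)$ with $F(z)=\beta z^2/2+hz$ analytic. Abundance holds by \cref{cor:specification-abundance}. \cref{prop:finite-reduction} gives $\Phi_\beta(h)=\max g_{\beta,h}$ and identifies the equilibrium values with $\argmax g_{\beta,h}$. The Legendre assumptions hold here because $P(y)=\log(2\cosh y)$ and $P'=\tanh$ maps $\R$ onto $(-1,1)$. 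Since $\mathfrak h'(z)=-\operatorname{artanh}z\to\mp\infty$ at $\pm1$, the endpoints are never maximizers, as in the proof of \cref{thm:analytic-finiteness}. Hence \cref{thm:legendre} and \cref{prop:finite-pf} apply. Each maximizer $z$ gives the unique linear equilibrium pair at $y=\beta z+h=\operatorname{artanh}z$. That pair is the product form $\pi_y(t)=(1+zt)/2$ computed in the proof of \cref{prop:two-state-transition}, which yields (ii). The pressure formula \eqref{eq:external-field-pressure} follows from the attained Legendre identity $\mathfrak h(z)=P(y)-yz$ with $y=\beta z+h$.

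For (i), the key observation is the comparison $g_{\beta,h}(z)-g_{\beta,h}(-z)=2hz$, which uses evenness of $\mathfrak h+\beta z^2/2$. For $h>0$, any maximizer must therefore be positive, and $\max g_{\beta,h}\ge g_{\beta,h}(m_\beta)$. On $(0,1)$ the critical equation is $\operatorname{artanh}z-\beta z=h$. The function $k(z):=\operatorname{artanh}z-\beta z$ is convex on $(0,1)$ (its second derivative is $2z/(1-z^2)^2>0$). It vanishes at $0$ and at $m_\beta$, is negative between them, and increases to $+\infty$ after $m_\beta$. So for $h>0$ there is exactly one positive critical point, and it lies in $(m_\beta,1)$. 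That point is then the unique maximizer, and it satisfies $z=\tanh(\beta z+h)$. The case $h<0$ follows by the symmetry $z\mapsto-z$, and $h=0$ is \cref{prop:two-state-transition}.

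For (iii), $\Phi_\beta$ is a supremum of affine functions of $h$, so it is convex. It is even by the $z\mapsto-z$ symmetry. The one-sided derivatives come from \eqref{eq:maxima-one-sided-derivatives} with $a(z)=g_{\beta,0}(z)$ and $b(z)=z$ on $K=[-1,1]$; since $M(0)=\{\pm m_\beta\}$, this gives \eqref{eq:first-order-jump}. Real analyticity for $h>0$ comes from the analytic implicit function theorem applied to $k(z)=h$. This works because $k'(z_{\beta,h})>0$: the root lies to the right of the minimum point of $k$, which is $s_\beta$, where $1/(1-s_\beta^2)=\beta$. Then $\Phi_\beta(h)=g_{\beta,h}(z_{\beta,h})$ is a composition of analytic maps.

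For (iv), take $h>0$; the case $h<0$ is symmetric. The critical points on $(-1,0)$ solve $k(z)=-h$ for $z\in(-1,0)$, which is equivalent to $k(w)=h$ with $w=-z\in(0,1)$. So they correspond to solutions of $k(w)=h$ with $w<m_\beta$, on the part of the graph of $k$ lying below zero. The minimum of $k$ on $(0,1)$ is attained at $s_\beta$ and equals $\operatorname{artanh}s_\beta-\beta s_\beta=-h_{\mathrm{sp}}(\beta)$, which is negative because $s_\beta<m_\beta$. By convexity, if $h<h_{\mathrm{sp}}$ there are exactly two such roots $w_1<s_\beta<w_2$, if $h=h_{\mathrm{sp}}$ there is one double root, and if $h>h_{\mathrm{sp}}$ there are none. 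The remaining work, which is the main obstacle, is classifying these two negative critical points by the sign of $g'_{\beta,h}=\beta z+h-\operatorname{artanh}z$. Near $-1$ this derivative is positive. Tracking the sign changes should show that $z=-w_2$ is a strict local maximum and $z=-w_1$ is a local minimum separating it from the positive global maximum. One should also check that $z=0$ is not critical, since $g'(0)=h\neq0$. At $h=h_{\mathrm{sp}}$ the two critical points merge, since $g''=\beta-1/(1-z^2)$ vanishes exactly at $z=\pm s_\beta$.
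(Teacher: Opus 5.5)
Your proposal is correct and is essentially the paper's proof. Your $k$ is the paper's $H_\beta$, and convexity of $k$ on $(0,1)$ is the same fact the paper uses when it says that $g_{\beta,h}'=h-H_\beta$ increases up to $s_\beta$ and decreases afterwards; the comparison $g_{\beta,h}(z)-g_{\beta,h}(-z)=2hz$, the analytic implicit function theorem, the horizontal-line count of the solutions of $H_\beta(z)=h$, and the one-sided derivative rule for maxima all play the same roles as in the paper. Your direct use of that rule at $h=0$ with $M(0)=\{\pm m_\beta\}$ is a slightly shorter route to the derivative jump than the paper's $\Phi_\beta'(h)=z_{\beta,h}$ followed by $h\to0$.

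There is one sign slip in (iv), which is notational rather than substantive. On $(-1,0)$ the critical equation is $k(z)=h$, i.e.\ $k(w)=-h$ with $w=-z$; you wrote these signs the other way round. The level $-h$ is in fact the one your count of the roots $w_1<s_\beta<w_2$ uses. With it, $g_{\beta,h}'$ is positive on $(-1,-w_2)$, negative on $(-w_2,-w_1)$, and positive on $(-w_1,z_{\beta,h})$. This sign pattern completes the classification you left as ``should show'': $-w_2$ is a strict local maximum and $-w_1$ is a local minimum separating it from the global maximizer $z_{\beta,h}$.
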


\begin{proof}
By \eqref{eq:two-state-localized-entropy}, the equilibrium values maximize
\begin{equation*}
 g_{\beta,h}(z)
 :=\mathfrak h(z)+\frac\beta2z^2+hz,
 \qquad -1\leq z\leq1.
\end{equation*}
Suppose first that $h>0$.  For $z>0$,
$g_{\beta,h}(z)>g_{\beta,h}(-z)$, so every global maximizer is positive.  On
$(0,1)$,
\begin{equation}\label{eq:external-field-derivative}
 g_{\beta,h}'(z)=\beta z+h-\operatorname{artanh}z.
\end{equation}
The function $g_{\beta,h}'$ increases up to $s_\beta$ and decreases
thereafter, because
\[
 g_{\beta,h}''(z)=\beta-\frac1{1-z^2}.
\]
Moreover, $g_{\beta,h}'(0)=h>0$, it tends to $-\infty$ as $z\uparrow1$, and
$g_{\beta,h}'(m_\beta)=h$.  Hence it has exactly one positive zero, which lies
in $(m_\beta,1)$, and that zero is the unique global maximizer.  The case
$h<0$ follows by the identity $g_{\beta,h}(z)=g_{\beta,-h}(-z)$.  This proves
part~\textup{(i)}.

At a critical value $z$, the dual parameter is
$y=\beta z+h=\operatorname{artanh}z$.  The Perron--Frobenius calculation in
the proof of \cref{prop:two-state-transition} therefore gives
\eqref{eq:external-field-pair}.  Legendre duality gives
\[
 \mathfrak h(z)+\frac\beta2z^2+hz
 =P(\beta z+h)-\frac\beta2z^2,
\]
which proves \eqref{eq:external-field-pressure}.

The function $\Phi_\beta$ is the supremum of affine functions of $h$, hence is
convex, and it is even because $g_{\beta,h}(z)=g_{\beta,-h}(-z)$.
For $h\ne0$, the unique maximizer satisfies
\[
 g_{\beta,h}''(z_{\beta,h})
 =\beta-\frac1{1-z_{\beta,h}^2}<0.
\]
The analytic implicit function theorem therefore gives analytic dependence
of $z_{\beta,h}$, and hence of $\Phi_\beta$, on each open half-line.
Applying \eqref{eq:maxima-one-sided-derivatives} with $K=[-1,1]$,
$a(z)=\mathfrak h(z)+\beta z^2/2$ and $b(z)=z$ gives
\begin{equation*}
 \Phi_\beta'(h)=z_{\beta,h}\qquad(h\neq0).
\end{equation*}
As $h\to0^+$, the positive solution converges to $m_\beta$; the negative
branch is obtained by symmetry.  This proves \eqref{eq:first-order-jump} and
parts~\textup{(ii)}--\textup{(iii)}.

For the local critical-point structure, rewrite
\eqref{eq:external-field-derivative} as
\begin{equation}\label{eq:spinodal-graph}
 h=H_\beta(z),
 \qquad H_\beta(z):=\operatorname{artanh}z-\beta z.
\end{equation}
The odd function $H_\beta$ has a local maximum at $-s_\beta$ and a local
minimum at $s_\beta$. Its local maximum value is
$H_\beta(-s_\beta)=h_{\mathrm{sp}}(\beta)>0$; positivity follows from
$s/(1-s^2)>\operatorname{artanh}s$ for $s\in(0,1)$.  A horizontal-line
comparison in \eqref{eq:spinodal-graph} now gives three critical points when
$0<|h|<h_{\mathrm{sp}}(\beta)$, a double and a simple critical point at the
threshold, and one critical point beyond it.  Since
$g_{\beta,h}''=-H_\beta'$, the two outer critical points are local maxima and
the middle one is a local minimum.  The sign comparison at the beginning of
the proof identifies the same-sign outer maximum as the unique global one,
leaving the other as metastable.  This proves part~\textup{(iv)}.
\end{proof}

\subsection{The three-state Potts model}

\begin{proposition}
\label{prop:three-state-potts}
Let $X=\{1,2,3\}$, let $T(i)=X$, and define
\[
 \boldsymbol\psi(i,j)
 :=\bigl(\mathbf 1_{\{j=1\}},\mathbf 1_{\{j=2\}}\bigr).
\]
For $\boldsymbol z=(p_1,p_2)$ put $p_3=1-p_1-p_2$, and consider
\begin{equation*}
 \cE_\beta(\gamma)
 :=\frac\beta2\sum_{k=1}^3p_k^2,
 \qquad
 p_k:=\gamma(X\times\{k\}),
 \qquad \beta\geq0.
\end{equation*}
Then the rotation set is the two-dimensional simplex
\begin{equation*}
 \mathcal R(\boldsymbol\psi)
 =\Delta_2:=\{(p_1,p_2):p_1,p_2\geq0,\ p_1+p_2\leq1\},
\end{equation*}
and the critical inverse temperature is
\begin{equation*}
 \beta_c=4\log2.
\end{equation*}
The nonlinear equilibrium pairs are classified as follows.
\begin{enumerate}[label=\textup{(\roman*)}]
\item If $0\leq\beta<\beta_c$, the unique equilibrium probability
vector is
\[
 \boldsymbol p^{\mathrm{dis}}=\left(\frac13,\frac13,\frac13\right).
\]
\item If $\beta=\beta_c$, there are four equilibrium probability
vectors: $\boldsymbol p^{\mathrm{dis}}$ and the three permutations of
\begin{equation}\label{eq:potts-coexistence-vector}
 \left(\frac23,\frac16,\frac16\right).
\end{equation}
\item If $\beta>\beta_c$, there are exactly three equilibrium
probability vectors, namely the permutations of
\begin{equation*}
 (a_\beta,b_\beta,b_\beta),
 \qquad
 a_\beta=\frac{u_\beta}{u_\beta+2},
 \qquad b_\beta=\frac1{u_\beta+2},
\end{equation*}
where $u_\beta>4$ is the unique solution of
\begin{equation*}
 \beta=\frac{(u_\beta+2)\log u_\beta}{u_\beta-1}.
\end{equation*}
\end{enumerate}
For each maximizing vector $\boldsymbol p$, the corresponding equilibrium pair
and two-coordinate distribution are
\begin{equation}\label{eq:potts-equilibrium-pair}
 \mu_{\boldsymbol p}(i)=p_i,
 \qquad Q_{\boldsymbol p}(i,j)=p_j,
 \qquad \gamma_{\boldsymbol p}(i,j)=p_i p_j.
\end{equation}
Moreover, if $\Psi(\beta):=\Ptop^{\cE_\beta}(T)$, then
\begin{equation}\label{eq:potts-pressure}
 \Psi(\beta)=
 \begin{cases}
  \log3+\dfrac\beta6,&0\leq\beta\leq\beta_c,\\[4pt]
  \log(u_\beta+2)-\dfrac{u_\beta}{u_\beta+2}\log u_\beta
  +\dfrac\beta2\dfrac{u_\beta^2+2}{(u_\beta+2)^2},
   &\beta>\beta_c.
 \end{cases}
\end{equation}
The one-sided derivatives satisfy
\begin{equation}\label{eq:potts-pressure-jump}
 \Psi'(\beta_c-)=\frac16,
 \qquad
 \Psi'(\beta_c+)=\frac14,
\end{equation}
so the temperature-driven transition is first order.
\end{proposition}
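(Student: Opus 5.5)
The plan is to work directly with the finite-dimensional reduction of \cref{prop:finite-reduction}, which needs only abundance. Abundance holds by \cref{cor:specification-abundance}, since the complete matrix is primitive. The Legendre machinery is then used only for the pressure derivative.

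\textbf{Step 1: rotation set and localized entropy.} For a stationary pair, $\int\boldsymbol\psi\,d\gamma_{\mu,Q}$ is the pair $(p_1,p_2)$ taken from the second marginal, which equals $\mu$. Every probability vector $\boldsymbol p$ is attained by the product pair $\mu=\boldsymbol p$, $Q(i,j)=p_j$. Hence $\mathcal R(\boldsymbol\psi)=\Delta_2$.

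By the finite-state entropy formula, $h_\mu(Q)=H(\gamma_{\mu,Q})-H(\mu)$. This is the conditional entropy of the second coordinate given the first, so it is at most the entropy of the second marginal, $H(\boldsymbol p)$. Equality holds iff $\gamma=\boldsymbol p\otimes\boldsymbol p$, and the product pair attains it. Therefore $\mathfrak h(\boldsymbol p)=H(\boldsymbol p)$, with the unique entropy maximizer on each fibre being the product pair. Once the maximizing vectors are found, this immediately gives \eqref{eq:potts-equilibrium-pair}: by \cref{prop:finite-reduction} an equilibrium pair has $z\in V$ and $h_\mu(Q)=\mathfrak h(z)$, which forces independence.

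\textbf{Step 2: maximize $g_\beta(\boldsymbol p)=H(\boldsymbol p)+\frac\beta2\sum p_k^2$ on the simplex.}
\begin{itemize}
\item \emph{Boundary excluded.} The partial derivative of $H$ is $+\infty$ at $p_k=0$, so no maximizer has a zero coordinate.
\item \emph{Interior critical points.} Lagrange multipliers give $-\log p_k+\beta p_k=\mathrm{const}$. The function $x\mapsto\beta x-\log x$ is strictly convex, so it takes each value at most twice. Hence a critical point has at most two distinct coordinate values: it is either the uniform vector or a permutation of $(a,b,b)$ with $a\neq b$.
\item \emph{Pattern $(a,a,b)$ with $a>b$.} I plan to exclude this by a second-order check. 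The Hessian of $g_\beta$ restricted to the tangent direction $e_1-e_2$ is $-1/a-1/a+2\beta$. Together with the critical equation this should be positive, so such a point is not a local maximum. I am least sure of this step; if the direct check is messy, I will instead argue via the one-dimensional family, or show that moving mass from one of the two equal large coordinates to the other increases $g_\beta$.
\item \emph{One-parameter reduction.} Parametrize the remaining candidates by $u=a/b>0$, so $a=u/(u+2)$ and $b=1/(u+2)$. The critical equation becomes $\log u=\beta(a-b)$, that is, $\beta=(u+2)\log u/(u-1)$.
\item \emph{Comparison function.} Set $D(u)=g_\beta(a,b,b)-g_\beta(\tfrac13,\tfrac13,\tfrac13)$. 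Substituting $\beta$ from the critical relation turns $D$ into an explicit function of $u$ alone, with constant sign changes only at $u=4$.
\end{itemize}

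\textbf{Main obstacle: the global comparison.} The step I expect to be hardest is showing that $D$, evaluated along the critical curve, is negative for $u<4$ and positive for $u>4$. I also need monotonicity of $\beta(u)$ on $u>4$, so that $u_\beta$ is unique and the ordered branch beats both the disordered state and the other critical branch. I would attack it by direct calculus in $u$, verifying the sign of $D'$ along the curve. I would also use the companion fact that the disordered point is a local maximum iff $\beta<3$; this is consistent, since $4\log 2<3$ is false, and so the disordered point stays a strict local maximum up to $\beta_c$. At $u=4$ one checks $\beta=6\log 4/3=4\log 2$ and equality of the two values, which gives the coexistence statement (ii).

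\textbf{Step 3: pressure.} Formula \eqref{eq:potts-pressure} follows by substituting the maximizers into $g_\beta$. For the derivatives, apply \eqref{eq:maxima-one-sided-derivatives} with $a=H$ and $b=\frac12\sum p_k^2$. At the disordered vector $b=\frac12\cdot\frac13=\frac16$, and at $(\frac23,\frac16,\frac16)$ one gets $b=\frac12\cdot\frac12=\frac14$. The minimum and maximum of $b$ over $M(\beta_c)$ are therefore $\frac16$ and $\frac14$, which gives \eqref{eq:potts-pressure-jump}.
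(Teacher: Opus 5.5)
Your plan is correct and follows the paper's skeleton: boundary exclusion, Lagrange multipliers with strict convexity of $x\mapsto\beta x-\log x$, the second variation $2\beta-2/a$, the curve $\beta=B(u)=(u+2)\log u/(u-1)$, and \eqref{eq:maxima-one-sided-derivatives} for the jump. The second variation is positive, as you expect: the critical equation gives $\beta a=t\log t/(t-1)>1$ with $t=a/b>1$, exactly as in the paper.

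It differs from the paper in two places.
\begin{itemize}
\item Your Step~1 gets $\mathfrak h=H$ from $H(X_1\mid X_0)\le H(X_1)$. You then read off \eqref{eq:potts-equilibrium-pair}, with uniqueness of the pair over each maximizing vector, from the equality case. The paper instead states $\mathfrak h$ and reads the pairs off Perron--Frobenius eigenvectors, i.e.\ through the Legendre reduction of \cref{thm:legendre}. Your route bypasses the Legendre hypotheses entirely, even for the derivative, since \eqref{eq:maxima-one-sided-derivatives} only uses continuity of $H$ and $b$.
\item For the global comparison, the paper compares $\boldsymbol p(u)$ with $\boldsymbol p^{\mathrm{dis}}$ for every $u>1$ through the threshold $C(u)$. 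Its minimum $4\log2$ at $u=4$ settles $\beta\le\beta_c$, and the paper then uses $B-C=\frac{u+2}{(u-1)^2}K$ above $\beta_c$. You compare only at critical points, where $D(u)=K(u)/(3(u+2))$. This suffices once $B$ is known to increase on $(4,\infty)$ from $B(4)=4\log2$.
\end{itemize}

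Your idea of studying $D'$ closes the main obstacle efficiently. Since $\boldsymbol p(u)$ is critical for $g_{B(u)}$ and $\boldsymbol p^{\mathrm{dis}}$ is critical for every $\beta$, the envelope argument gives $D'(u)=B'(u)\bigl(b(\boldsymbol p(u))-b(\boldsymbol p^{\mathrm{dis}})\bigr)=R(u)/(3(u+2)^2)$. Here $R(u)=u+1-2/u-3\log u$, the same numerator the paper uses for $B'$. From $R(1)=0$ and $R'(u)=(u-1)(u-2)/u^2$, $R$ has exactly one zero $u_0$ in $(1,\infty)$, and $u_0\in(2,4)$ because $R(4)>0$. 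Together with $D(1^+)=0=D(4)$, this gives at once $D<0$ on $(1,4)$, $D>0$ on $(4,\infty)$, and $B'>0$ on $(4,\infty)$. The paper instead handles the sign via $K(1)=K'(1)=K(4)=0$ and $K''(u)=(u-1)(u-2)/(u^2(u+2))$, and treats the monotonicity of $B$ separately.

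Two slips to fix.
\begin{itemize}
\item $4\log2\approx2.77<3$ is true, not false; that local-maximum remark is not needed anyway.
\item $u$ must range over $(1,\infty)$, not $(0,\infty)$. On $(0,1)$ one has $K>0$, so your claimed sign pattern fails there. Those points are precisely the saddles with two equal larger coordinates that your second-order check already removes.
\end{itemize}
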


\begin{proof}
For $\boldsymbol y=(y_1,y_2)$, the weighted matrix has three identical rows,
each equal to $(e^{y_1},e^{y_2},1)$.  Hence
\begin{equation*}
 P(\boldsymbol y)=\log(1+e^{y_1}+e^{y_2}),
\end{equation*}
and its gradient maps $\R^2$ diffeomorphically onto
$\intt\Delta_2$.  The localized entropy is
\begin{equation*}
 \mathfrak h(p_1,p_2)=-\sum_{i=1}^3p_i\log p_i,
 \qquad p_3=1-p_1-p_2.
\end{equation*}
Thus the equilibrium probability vectors maximize
\begin{equation}\label{eq:potts-variational-function}
 J_\beta(\boldsymbol p)
 :=-\sum_{i=1}^3p_i\log p_i+\frac\beta2\sum_{i=1}^3p_i^2
\end{equation}
over the probability simplex.

No boundary point maximizes \eqref{eq:potts-variational-function}: moving a
small mass to a zero coordinate produces an entropy gain of order
$-\eps\log\eps$, which dominates the $O(\eps)$ change of
the quadratic term.  At an interior critical point, Lagrange multipliers give
\begin{equation}\label{eq:potts-lagrange}
 \log p_i-\beta p_i=\log p_j-\beta p_j
 \qquad(1\leq i,j\leq3).
\end{equation}
Since $x\mapsto\log x-\beta x$ is strictly concave, each horizontal line meets
its graph in at most two points; hence a critical vector has at most two
distinct coordinates.  A vector with two equal larger
coordinates cannot be a local maximum.  Indeed, if $a>b$ are its two values,
then \eqref{eq:potts-lagrange} gives
\[
 \beta a=\frac{(a/b)\log(a/b)}{a/b-1}>1,
\]
and the second variation in the direction that separates the two coordinates
equal to $a$ is positive.  Consequently, every nonuniform maximizer is a
permutation of
\begin{equation}\label{eq:potts-u-parametrization}
 \boldsymbol p(u)=\frac1{u+2}(u,1,1),
 \qquad u>1.
\end{equation}

It remains to compare these vectors globally with the disordered vector.  Put
\begin{align*}
 L(u)&:=\log\frac3{u+2}+\frac{u}{u+2}\log u,\\
 C(u)&:=\frac{3(u+2)^2}{(u-1)^2}L(u).
\end{align*}
A direct calculation gives
\begin{equation}\label{eq:potts-global-comparison}
 J_\beta(\boldsymbol p(u))-J_\beta(\boldsymbol p^{\mathrm{dis}})
 =\frac{(u-1)^2}{3(u+2)^2}\bigl(\beta-C(u)\bigr).
\end{equation}
The derivative $C'(u)$ has the sign of
\begin{equation*}
 K(u):=3(u+2)\log\frac{u+2}{3}-(2u+1)\log u.
\end{equation*}
Here $K(1)=K(4)=K'(1)=0$ and
\[
 K''(u)=\frac{(u-1)(u-2)}{u^2(u+2)}.
\]
It follows that $K<0$ on $(1,4)$ and $K>0$ on $(4,+\infty)$.
Consequently, $C$ has its unique minimum at $u=4$, with
\begin{equation}\label{eq:potts-minimum-comparison}
 \min_{u>1}C(u)=C(4)=4\log2.
\end{equation}
Equations \eqref{eq:potts-global-comparison} and
\eqref{eq:potts-minimum-comparison} already prove the assertions below and at
the critical temperature, including the four coexistence vectors.

The vector in \eqref{eq:potts-u-parametrization} is a critical point exactly when
\begin{equation*}
 \beta=B(u):=\frac{(u+2)\log u}{u-1}.
\end{equation*}
The same function $K$ satisfies
\[
 B(u)-C(u)=\frac{u+2}{(u-1)^2}K(u).
\]
Thus a nonuniform critical vector has a larger variational value than the disordered vector only when
$u>4$.  Moreover, $B$ is strictly increasing on $(4,+\infty)$ and maps this
interval onto $(4\log2,+\infty)$.  Indeed, the numerator controlling $B'(u)$
is $R(u)=u+1-2/u-3\log u$; one has $R(4)>0$ and
$R'(u)=(u-1)(u-2)/u^2>0$ for $u>4$.  This proves the uniqueness of $u_\beta$
and the three-phase classification above the transition.

The Perron--Frobenius eigenvectors of the identical-row matrix give
\eqref{eq:potts-equilibrium-pair}.  Substitution of the maximizing vectors in
\eqref{eq:potts-variational-function} gives \eqref{eq:potts-pressure}.
Finally, apply \eqref{eq:maxima-one-sided-derivatives} on the probability
simplex with
\[
 a(\boldsymbol p)=-\sum_{i=1}^3p_i\log p_i,
 \qquad b(\boldsymbol p)=\frac12\sum_{i=1}^3p_i^2.
\]
For $\beta>0$ with $\beta\ne\beta_c$, all maximizing probability vectors differ only by
coordinate permutations and hence have the same value of $b$. Thus
\[
 \Psi'(\beta)=\frac12\sum_{i=1}^3p_i^2
\]
for any maximizing probability vector $\boldsymbol p$.
At $\beta_c$, the disordered vector has $b=1/6$, whereas each ordered vector
in \eqref{eq:potts-coexistence-vector} has $b=1/4$. The left- and
right-derivative formulas give \eqref{eq:potts-pressure-jump}.
\end{proof}

\begin{remark}
This is the three-state mean-field Potts variational problem, expressed through
two state-occupation observables on a correspondence.  In contrast with the scalar example,
the rotation set has nonempty two-dimensional interior and the transition is
first order even with no external field; compare the Potts discussion in
\cite{buzzi-kloeckner-leplaideur-nonlinear}.
\end{remark}

\subsection{Correlations with identical stationary marginals}

\begin{proposition}
\label{prop:edge-correlation}
Let $X=\{-1,+1\}$, let $T(s)=X$, and take the endpoint-dependent
observable
\begin{equation*}
 \psi(s,t):=st.
\end{equation*}
For $\beta\geq0$, define
\begin{equation*}
 \cE_\beta(\gamma)
 :=\frac\beta2\left(\int_{X^2}st\,d\gamma(s,t)\right)^2.
\end{equation*}
Then the nonlinear equilibrium pairs are as follows.
\begin{enumerate}[label=\textup{(\roman*)}]
\item If $0\leq\beta\leq1$, the equilibrium pair is unique and
\begin{equation}\label{eq:edge-correlation-disordered}
 \mu_0(s)=\frac12,
 \qquad Q_0(s,t)=\frac12,
 \qquad \gamma_0(s,t)=\frac14.
\end{equation}
\item If $\beta>1$, let $m_\beta\in(0,1)$ be the positive solution of
$m_\beta=\tanh(\beta m_\beta)$.  There are exactly two equilibrium pairs,
indexed by $\sigma\in\{-1,+1\}$:
\begin{equation}\label{eq:edge-correlation-equilibria}
 \mu_\beta^\sigma(s)=\frac12,
 \qquad
 Q_\beta^\sigma(s,t)=\frac{1+\sigma m_\beta st}{2},
 \qquad
 \gamma_\beta^\sigma(s,t)=\frac{1+\sigma m_\beta st}{4}.
\end{equation}
Their correlations between successive states are
\begin{equation}\label{eq:edge-correlation-order-parameter}
 \int_{X^2}st\,d\gamma_\beta^\sigma(s,t)=\sigma m_\beta.
\end{equation}
\end{enumerate}
The nonlinear pressure is
\begin{equation}\label{eq:edge-correlation-pressure}
 \Ptop^{\cE_\beta}(T)
 =\begin{cases}
   \log2,&0\leq\beta\leq1,\\
   \log\bigl(2\cosh(\beta m_\beta)\bigr)
      -\dfrac\beta2m_\beta^2,&\beta>1.
  \end{cases}
\end{equation}
Both marginals of every $\gamma_\beta^\sigma$ are uniform, but for $\beta>1$
the two-coordinate distributions are nonproduct.  For probability vectors $p,q$ on a finite set $I$, their relative entropy is
\[
 D(p\|q):=\sum_{i\in I}p_i\log\frac{p_i}{q_i},
\]
where zero summands are omitted and $D(p\|q)=+\infty$ if $p_i>0=q_i$
for some $i$. In the present example,
\begin{equation}\label{eq:edge-correlation-information}
 D\bigl(\gamma_\beta^\sigma\,\|\,\mu_\beta^\sigma\otimes
 \mu_\beta^\sigma\bigr)
 =\frac{1+m_\beta}{2}\log(1+m_\beta)
 +\frac{1-m_\beta}{2}\log(1-m_\beta)>0.
\end{equation}
The left-hand side, the relative entropy with respect to the product of
the marginals, is the mutual information of the two successive states.
\end{proposition}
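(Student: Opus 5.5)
The plan is to follow the two-state template of \cref{prop:two-state-transition}, replacing the observable $t$ by $st$. First I will compute the weighted matrix from \eqref{eq:weighted-matrix}. With the states ordered as $-1,+1$,
\[
 B_y=\begin{pmatrix}e^{y}&e^{-y}\\ e^{-y}&e^{y}\end{pmatrix}.
\]
This matrix is symmetric and has row sums $2\cosh y$. Hence $P(y)=\log(2\cosh y)$, $P'(y)=\tanh y$, and the rotation set is $[-1,1]$. The Perron vectors are $r_y=\ell_y\propto(1,1)$, so \cref{prop:finite-pf} gives $\mu_y(s)=1/2$ for every $y$ and
\[
 Q_y(s,t)=\frac{e^{yst}}{2\cosh y}=\frac{1+(\tanh y)\,st}{2}.
\]

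Next I will compute the localized entropy. A stationary pair with $\int st\,d\gamma=z$ satisfies $\gamma(s,s)$ summed over $s$ equal to $(1+z)/2$. The pairs constrain the symmetric and flip masses, and the entropy formula $h_\mu(Q)=H(\gamma)-H(\mu)$ is maximized by the uniform $\mu$ together with the symmetric flip kernel above. I will obtain this most cleanly through the Legendre identity in \cref{def6.2}(iv). Since $P$ is the same function as in the two-state model, $\mathfrak h(z)=\inf_y(P(y)-yz)$ is again the binary entropy \eqref{eq:two-state-localized-entropy}. The Legendre assumptions hold exactly as in the earlier example: $P$ is analytic and strictly convex, $\tanh$ is a diffeomorphism onto $(-1,1)$, and $\mathfrak h$ is upper semicontinuous by the finite-state remark.

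The variational function $g_\beta(z)=\mathfrak h(z)+\beta z^2/2$ is then literally the same as in \cref{prop:two-state-transition}. Its global maximizer analysis (the monotonicity of $\operatorname{artanh}(z)/z$, together with the exclusion of the endpoints) therefore carries over verbatim: the maximizer is $z=0$ for $\beta\le1$, and the maximizers are $\pm m_\beta$ for $\beta>1$. \cref{thm:legendre} then identifies the equilibrium pairs with the linear ones at $y=\beta z=\operatorname{artanh}z$. Substituting $\tanh y=\sigma m_\beta$ gives \eqref{eq:edge-correlation-equilibria}, and $\gamma=\mu Q$ gives the stated product form. Uniqueness of the linear pairs comes from primitivity. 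The pressure formula \eqref{eq:edge-correlation-pressure} follows from the attained Legendre identity, as before.

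Finally, the integral \eqref{eq:edge-correlation-order-parameter} is the direct sum $\sum_{s,t}st(1+\sigma m st)/4=\sigma m$. The marginals are uniform by construction. For the mutual information, $\gamma/(\mu\otimes\mu)=1+\sigma m st$, which takes the value $1\pm m$ each with total mass $(1\pm m)/2$; this yields \eqref{eq:edge-correlation-information}. Positivity for $m\in(0,1)$ holds because the expression is the relative entropy of the binary distribution $((1+m)/2,(1-m)/2)$ with respect to the uniform one, which is strictly positive off the uniform point. The only real check is verifying the Legendre hypotheses, which I reduce to the already-established two-state computation because $P$ coincides.
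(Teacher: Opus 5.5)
Your proposal is correct and follows the paper's proof essentially step for step. The shared steps are the same weighted matrix, the Perron--Frobenius pair $Q_y(s,t)=(1+\tanh(y)\,st)/2$, the reduction to the Curie--Weiss function $\mathfrak h(z)+\beta z^2/2$ already analysed in \cref{prop:two-state-transition}, identification of the pairs via \cref{thm:legendre}, and direct substitution for the mutual information; your positivity argument, which views the right-hand side as the relative entropy of $((1+m)/2,(1-m)/2)$ to the uniform vector, fills in a step the paper leaves implicit.

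The only soft spot is justifying the Legendre identity because $P$ coincides with the earlier model. That identity concerns $\mathfrak h$ for the new observable $st$, so it does not transfer from $P$ alone. The paper instead reads off $\mathfrak h(z)$ as the entropy rate of $Q_y$ with $z=\tanh y$, which is maximal in its rotation class because $h_\mu(Q)+yz\le P(y)$. Alternatively, the bound $H(t\mid s)=H(st\mid s)\le H(st)$ gives it directly.
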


\begin{proof}
Ordering the states as $-1,+1$, the weighted matrix and its linear pressure are
\begin{equation*}
 B_y=\begin{pmatrix}e^y&e^{-y}\\e^{-y}&e^y\end{pmatrix},
 \qquad P(y)=\log(2\cosh y),
 \qquad P'(y)=\tanh y.
\end{equation*}
The Perron--Frobenius formulas give the unique linear equilibrium pair
\begin{equation}\label{eq:edge-correlation-linear-pair}
 \mu_y(s)=\frac12,
 \qquad Q_y(s,t)=\frac{e^{yst}}{2\cosh y}.
\end{equation}
Writing $z=\tanh y$, this becomes
$Q_y(s,t)=(1+zst)/2$.  Its entropy rate, and hence the localized entropy at
correlation $z$, is
\begin{equation*}
 \mathfrak h(z)
 =-\frac{1+z}{2}\log\frac{1+z}{2}
  -\frac{1-z}{2}\log\frac{1-z}{2}.
\end{equation*}
Consequently, the scalar nonlinear variational function is exactly
$\mathfrak h(z)+\beta z^2/2$.  The global calculation in the proof of
\cref{prop:two-state-transition} shows that its unique maximizer is $0$ for
$\beta\leq1$ and that its only maximizers are $\pm m_\beta$ for $\beta>1$.
Substitution in \eqref{eq:edge-correlation-linear-pair} proves
\eqref{eq:edge-correlation-disordered}--\eqref{eq:edge-correlation-order-parameter},
and Legendre duality gives \eqref{eq:edge-correlation-pressure}.

Summing \eqref{eq:edge-correlation-equilibria} over either coordinate gives
the uniform marginal.  The product of these marginals assigns mass $1/4$ to
every edge, so it agrees with $\gamma_\beta^\sigma$ only when $m_\beta=0$.
Direct substitution in the definition of relative entropy gives
\eqref{eq:edge-correlation-information}.
\end{proof}

\begin{remark}
For $z\in[-1,1]$, under the stationary Markov measure with transition probabilities
$Q_z(s,t)=(1+zst)/2$ and uniform stationary measure, let
$(s_n)_{n\geq0}$ denote the coordinate process.  Then
$\mathbb E[s_0s_n]=z^n$, where $\mathbb E$ denotes expectation.  Thus the
correlation is positive at every lag when $z>0$, whereas its sign alternates
with the lag when $z<0$.  Both phases have the same stationary measure on $X$.
The two-coordinate distributions distinguish these phases, although their
one-coordinate marginals agree.
\end{remark}

\subsection{A forbidden transition and exponent \texorpdfstring{$1/4$}{1/4}}

\begin{proposition}
\label{prop:constrained-correlation}
Let $X=\{-1,0,+1\}$, ordered in this way, and let the correspondence be defined
by the primitive non-complete adjacency matrix
\begin{equation*}
 A=\begin{pmatrix}1&1&1\\1&0&1\\1&1&1\end{pmatrix}.
\end{equation*}
Thus the only forbidden edge is $(0,0)$.  Set $\psi(s,t)=st$ and
\begin{equation*}
 \cE_\beta(\gamma)
 :=\frac\beta2\left(\int_{\Gamma_{T_A}}st\,d\gamma(s,t)\right)^2,
 \qquad \beta\geq0.
\end{equation*}
Define
\begin{align}
 \lambda(y)&:=\cosh y+\sqrt{\cosh^2y+2},
 \label{eq:constrained-perron-root}\\
 z(y)&:=\frac{\sinh y}{\sqrt{\sinh^2y+3}}.
 \label{eq:constrained-rotation-parameter}
\end{align}
Then the critical inverse temperature is
\begin{equation*}
 \beta_c:=\sqrt3,
\end{equation*}
and the nonlinear equilibrium pairs are classified as follows.
\begin{enumerate}[label=\textup{(\roman*)}]
\item If $0\leq\beta\leq\beta_c$, there is a unique equilibrium pair,
corresponding to $y=0$ and correlation between successive states $z=0$.
\item If $\beta>\beta_c$, there are exactly two equilibrium pairs,
corresponding to $y=\pm y_\beta$, where $y_\beta>0$ is the unique solution of
\begin{equation}\label{eq:constrained-self-consistency}
 y_\beta
 =\beta\frac{\sinh y_\beta}{\sqrt{\sinh^2y_\beta+3}}.
\end{equation}
Their correlations between successive states are $\pm z_\beta$, where $z_\beta:=z(y_\beta)$.
\end{enumerate}
For an arbitrary dual parameter $y$, the linear equilibrium kernel and its
stationary measure are
\begin{align}
 Q_y&=\begin{pmatrix}
  e^y/\lambda(y)&2/\lambda(y)^2&e^{-y}/\lambda(y)\\
  1/2&0&1/2\\
  e^{-y}/\lambda(y)&2/\lambda(y)^2&e^y/\lambda(y)
 \end{pmatrix},
 \label{eq:constrained-kernel}\\
 \mu_y(-1)&=\mu_y(+1)=\frac{\lambda(y)^2}{2(\lambda(y)^2+2)},
 &\mu_y(0)&=\frac2{\lambda(y)^2+2}.
 \label{eq:constrained-stationary}
\end{align}
In particular, the two nonlinear equilibrium pairs above the transition have
the same stationary measure on $X$, since $\lambda$ is even, but distinct
two-coordinate distributions $\gamma_y(i,j)=\mu_y(i)Q_y(i,j)$.

The nonlinear pressure is
\begin{equation}\label{eq:constrained-nonlinear-pressure}
 \Ptop^{\cE_\beta}(T_A)
 =\begin{cases}
  \log(1+\sqrt3),&0\leq\beta\leq\sqrt3,\\
  \log\lambda(y_\beta)-\dfrac\beta2z_\beta^2,&\beta>\sqrt3.
 \end{cases}
\end{equation}
As $\beta\downarrow\sqrt3$,
\begin{equation}\label{eq:constrained-critical-exponent}
 z_\beta
 \sim\frac{(10\sqrt3)^{1/4}}{\sqrt3}
       (\beta-\sqrt3)^{1/4}.
\end{equation}
Consequently, the transition is continuous: the pressure is
$C^1$ and is not $C^2$ at $\beta=\sqrt3$.
\end{proposition}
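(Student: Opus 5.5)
The plan is to reduce everything to \cref{thm:taylor-critical-exponent} with $k=2$, after computing the linear data explicitly through \cref{prop:finite-pf}.

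\textbf{Step 1: the Perron data.} With $\psi(s,t)=st$ the weighted matrix \eqref{eq:weighted-matrix} is the symmetric matrix
\[
 B_y=\begin{pmatrix}e^y&1&e^{-y}\\1&0&1\\e^{-y}&1&e^y\end{pmatrix}.
\]
I will look for a Perron vector of the form $r=(1,c,1)$. The middle row gives $2=\lambda c$, and the outer rows give $2\cosh y+c=\lambda$. Hence $\lambda^2-2\cosh y\,\lambda-2=0$, whose positive root is \eqref{eq:constrained-perron-root}. Because $B_y$ is symmetric, $\ell\propto r$.

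Formula \eqref{eq:finite-kernel} with $r=(1,2/\lambda,1)$ then gives \eqref{eq:constrained-kernel}. The stationary measure satisfies $\mu_y\propto r_i^2$, which gives \eqref{eq:constrained-stationary}. Since $\lambda$ is even, $\mu_{y}=\mu_{-y}$, while $Q_y\neq Q_{-y}$ for $y\neq0$; this proves the assertion about identical stationary measures but distinct two-coordinate distributions.

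\textbf{Step 2: the linear pressure and its derivative.} Differentiating $\lambda$ gives $\lambda'=\sinh y\,\lambda/\sqrt{\cosh^2y+2}$. Therefore
\[
 P(y)=\log\lambda(y),\qquad P'(y)=z(y),
\]
as in \eqref{eq:constrained-rotation-parameter}. In particular $P$ is even, real analytic and strictly convex, and $P'$ is an analytic diffeomorphism of $\R$ onto $(-1,1)$.

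The rotation interval is $[-1,1]$. The value $1$ is attained by the fixed loop at $+1$, and the value $-1$ by the alternating two-cycle between $-1$ and $+1$.

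\textbf{Step 3: the hypotheses of \cref{thm:taylor-critical-exponent}.}
\begin{itemize}
 \item[(a)] \emph{Legendre assumptions and uniqueness.} Primitivity gives specification, abundance and unique linear equilibrium pairs (\cref{prop:finite-pf}). Upper semicontinuity of $\mathfrak h$ was noted before \cref{prop:finite-pf}. The variational formula \eqref{eq:linear-pressure-function} exhibits $P$ as the Legendre transform of the concave upper semicontinuous function $\mathfrak h$, so Fenchel duality gives the identity in \cref{def6.2}(iv). Analyticity and strict concavity of $\mathfrak h$ on the interior then follow from $\mathfrak h(z)=P(y(z))-y(z)z$ with $y=(P')^{-1}$ analytic.
 \item[(b)] \emph{Taylor jet.} Expanding
 \[
  P'(y)=\sinh y\,(3+\sinh^2y)^{-1/2}
 \]
 with $\sinh^2y=y^2+y^4/3+O(y^6)$, I expect the $y^3$ terms to cancel. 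The leading terms are $a=1/\sqrt3$ and $c_1=0$, with a negative $y^5$ coefficient. The closed form \eqref{eq:constrained-critical-exponent} forces $b=1/(30\sqrt3)$, which I will confirm by the expansion. Thus $k=2$, and $\beta_c=1/a=\sqrt3$.
 \item[(c)] \emph{Response monotonicity.} I must show that $\mathcal N_P(y)=P'(y)-yP''(y)>0$ for $y>0$; this is \eqref{eq:constrained-monotonicity}. This step is the main obstacle. I would write
 \[
  P''(y)=\frac{3\cosh y}{(\sinh^2y+3)^{3/2}},
 \]
 so positivity is equivalent to
 \[
  \sinh y\,(\sinh^2y+3)>3y\cosh y,
 \]
 that is, $\tanh y\,(\sinh^2y+3)>3y$. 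I would first try to compare power series, since both sides agree through order $y^3$ and the difference should have nonnegative coefficients. Failing that, I would use a derivative argument: the difference vanishes at $0$ and has positive derivative.
\end{itemize}

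\textbf{Step 4: conclusion.} \cref{thm:taylor-critical-exponent} now gives parts (i)--(ii), the self-consistency equation \eqref{eq:constrained-self-consistency}, and $z_\beta\sim a(a^2/b)^{1/4}(\beta-\sqrt3)^{1/4}$. With $a^2/b=10\sqrt3$ this equals \eqref{eq:constrained-critical-exponent}. The same theorem gives the $C^1$-but-not-$C^2$ behaviour of the pressure at $\beta_c$. Finally, the pressure formula \eqref{eq:constrained-nonlinear-pressure} follows from \eqref{eq:general-nonlinear-pressure} and $\lambda(0)=1+\sqrt3$.
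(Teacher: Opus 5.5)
Your proposal is correct and is essentially the paper's own argument. The paper uses the same Perron data and the same expansion $z(y)=y/\sqrt3-y^5/(30\sqrt3)+O(y^7)$. It proves positivity of $\sinh y(\sinh^2y+3)-3y\cosh y$ exactly as in your fallback, via the derivative $3\sinh y(\sinh y\cosh y-y)>0$; your power-series route would work on this entire function but not on the $\tanh$ form, whose series has radius $\pi/2$. The one difference is packaging: the paper repeats the sign analysis of \cref{thm:taylor-critical-exponent} for this example instead of citing the theorem, and its closing remark records the same $a=1/\sqrt3$, $b=1/(30\sqrt3)$, $k=2$.

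Your explicit check of the $C^\omega$ Legendre assumptions via Fenchel--Moreau, which the paper leaves implicit, is sound once you justify that $\mathfrak h$ is concave. That holds because $h_\mu(Q)=H(\gamma_{\mu,Q})-H(\mu)$ is a conditional entropy, hence concave on the convex set of stationary two-coordinate distributions, and the rotation constraint is linear.
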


\begin{proof}
For the two-coordinate potential $y\psi$, the weighted matrix is
\begin{equation*}
 B_y=\begin{pmatrix}
  e^y&1&e^{-y}\\1&0&1\\e^{-y}&1&e^y
 \end{pmatrix}.
\end{equation*}
Its Perron eigenvalue is \eqref{eq:constrained-perron-root}, with a positive
eigenvector
\begin{equation*}
 r_y=\left(1,\frac2{\lambda(y)},1\right).
\end{equation*}
Since $B_y$ is symmetric, the left eigenvector is proportional to $r_y$.
The formulas in \cref{prop:finite-pf} therefore give
\eqref{eq:constrained-kernel} and \eqref{eq:constrained-stationary}.  They also
give
\begin{equation*}
 P(y)=\log\lambda(y),
 \qquad P'(y)=z(y).
\end{equation*}
In particular, the scalar rotation set is $[-1,1]$.

Let $y(z)$ denote the inverse of the odd increasing map $z(y)$.  Legendre
duality gives $\mathfrak h'(z)=-y(z)$ on $(-1,1)$, so the derivative of the
nonlinear variational function is
\begin{equation}\label{eq:constrained-variational-derivative}
 \frac{d}{dz}\left(\mathfrak h(z)+\frac\beta2z^2\right)
 =\beta z-y(z).
\end{equation}
We claim that $y/z(y)$ is strictly increasing for $y>0$.  Indeed, with
$x=\sinh y$,
\[
 z(y)-yz'(y)
 =\frac{x(x^2+3)-3y\cosh y}{(x^2+3)^{3/2}}.
\]
The numerator vanishes at zero and its derivative is
\begin{equation}\label{eq:constrained-monotonicity}
 3\sinh y\bigl(\sinh y\cosh y-y\bigr)>0
 \qquad(y>0).
\end{equation}
Hence $z(y)/y$ is strictly decreasing, or equivalently $y(z)/z$ is strictly
increasing on $(0,1)$.  Its endpoint limits are
\begin{equation*}
 \lim_{z\to0^+}\frac{y(z)}z=\frac1{P''(0)}=\sqrt3,
 \qquad
 \lim_{z\uparrow1}\frac{y(z)}z=+\infty.
\end{equation*}
The sign pattern in \eqref{eq:constrained-variational-derivative} now proves
the global classification: for $\beta\leq\sqrt3$ the unique maximizer is zero,
whereas for $\beta>\sqrt3$ the only maximizers are $\pm z_\beta$.
At a nonzero maximizer, $y_\beta=\beta z_\beta$, which is
\eqref{eq:constrained-self-consistency}.  Legendre duality then yields
\eqref{eq:constrained-nonlinear-pressure}.

It remains to analyze the onset.  Expansion of
\eqref{eq:constrained-rotation-parameter} at zero gives
\begin{equation*}
 z(y)=\frac{y}{\sqrt3}-\frac{y^5}{30\sqrt3}+O(y^7),
 \qquad
 \frac{y}{z(y)}=\sqrt3\left(1+\frac{y^4}{30}+O(y^6)\right).
\end{equation*}
Solving $\beta=y_\beta/z(y_\beta)$ proves
\eqref{eq:constrained-critical-exponent}. Finally,
\eqref{eq:maxima-one-sided-derivatives}, with $a=\mathfrak h$ on the rotation
interval and $b(z)=z^2/2$, gives
\begin{equation*}
 \frac{d}{d\beta}\Ptop^{\cE_\beta}(T_A)=\frac12z_\beta^2
 \qquad(\beta>\sqrt3),
\end{equation*}
while the derivative is zero below the transition.  By
\eqref{eq:constrained-critical-exponent}, the first derivative is continuous
and the right derivative of this first derivative diverges at $\sqrt3$.  Thus
the pressure is $C^1$ but not $C^2$ there.
\end{proof}

\begin{remark}
For this adjacency matrix, the cubic term in $z(y)$ vanishes, and the first
nonlinear term is of degree five. The resulting exponent $1/4$ differs from
the exponent $1/2$ in the complete two-state model.  In the notation of
\cref{thm:taylor-critical-exponent}, this example has
$a=1/\sqrt3$, $b=1/(30\sqrt3)$, and $k=2$.
\end{remark}

\section{Further directions}

The one-dimensional finiteness theorem leaves a concrete question for several
observables. Fix a primitive finite-state correspondence and
$\boldsymbol\psi:\GammaT\to\R^d$, with $d\geq2$, satisfying the
$C^\omega$ Legendre assumptions, and consider
$F_\beta(\boldsymbol z)=\frac{\beta}{2}\|\boldsymbol z\|^2$.
Is the set of globally maximizing rotation vectors finite for every
$\beta\geq0$, and which parameter values admit coexistence? This is a
correspondence version of the higher-dimensional question raised in
\cite[Section~1.8]{buzzi-kloeckner-leplaideur-nonlinear}. The Potts example
illustrates coexistence for a quadratic energy, but the isolated-zero
argument in \cref{thm:analytic-finiteness} does not give a general answer.

The backward-orbit equidistribution theorem of
\cite[Theorem~C]{li-li-zhang-correspondences} suggests a more specific extension
of \cref{thm:gibbs-ensembles}. Let $T$ satisfy the dynamical hypotheses of
\cite[Theorem~C]{li-li-zhang-correspondences}, and let $\cE$ be a continuous
energy satisfying the abundance condition. Fix an endpoint $x$ and weight each
admissible segment $\boldsymbol x=(x_0,\ldots,x_n)$ with $x_n=x$ by
$\exp(n\cE(L_n(\boldsymbol x)))$. One can ask whether the resulting partition
sum has exponential growth rate $\Ptop^{\cE}(T)$ and whether the weighted
averages of $L_n$ converge. When several equilibrium pairs coexist,
determining which two-coordinate distributions contribute to the limit, and
with what weights, requires more information than the barycenter description
in \cref{thm:gibbs-ensembles}.

Concrete criteria for the Legendre assumptions would also make the reduction
theorem easier to apply. In the single-valued setting,
\cite[Theorem~1.6]{buzzi-kloeckner-leplaideur-nonlinear} gives criteria for
Legendre regularity, while
\cite[Corollary~1.5]{buzzi-kloeckner-leplaideur-nonlinear} gives statistical
properties of each nonlinear equilibrium measure under its stated hypotheses.
For correspondences, a useful next step is to find verifiable conditions on
$T$ and the two-coordinate observables that imply the $C^\omega$ Legendre
assumptions of \cref{def6.2}, and then establish mixing,
decay of correlations, and central limit theorems for the stationary path
measure of each nonlinear equilibrium pair. The holomorphic correspondences
defined by $(w-c)^p=z^q$, with $c\in\mathbb C$ and integers $1\leq p<q$, restricted to their Julia
sets in the parameter regions of
\cite[Theorems~F and G]{li-li-zhang-correspondences}, provide concrete
candidates. These properties concern individual equilibrium pairs; they do
not automatically pass to mixtures arising as Gibbs limits.

Freezing transitions provide another extension beyond the smooth examples
considered here. In \cite[Theorems~4.4 and 4.5]{buzzi-kloeckner-leplaideur-nonlinear},
the equilibrium set becomes independent of the inverse temperature above a
finite threshold and consists of energy-maximizing measures. For
correspondences, one can ask which stationary pairs with $h_\mu(Q)=0$ can be
selected in this way by the family $\beta\cE$, where
$\cE\in C(\Prob(\GammaT),\R)$. A first case
is a correspondence $S$ with $S(x)\subset T(x)$ for every $x\in X$,
whose orbit shift has zero topological entropy. Extending the construction to
arbitrary compact invariant subsets of the orbit space with zero topological
entropy requires care: a continuous potential on that space need not depend
only on two successive coordinates, and its invariant measures need not be
Markov measures. The restriction to two-coordinate distributions, together
with the behavior at boundary maximizing values, therefore needs to be
examined beyond the smooth Legendre analysis above.

Finally, \cref{thm:taylor-critical-exponent,cor:taylor-stratification} describe
critical asymptotics for fixed response coefficients. When a small negative
cubic term perturbs a response whose first nonlinear term is quintic, a joint
limit of the perturbation and the distance to the critical inverse
temperature could describe the passage between the exponents $1/2$ and $1/4$.
Uniform asymptotic formulas should identify the scale on which the two terms
compete, under hypotheses preserving the continuous transition.

\subsection*{Declarations}
\begin{itemize}
\item []\textbf{Ethical approval:} Not applicable.

\item []\textbf{Competing interests:} There is no conflict of interest.

\item []\textbf{Authors' contributions:} D. Tang was responsible for conceptualization and
writing the original draft. Z. Li and R. Yang contributed to supervision and to writing, reviewing,
and editing the manuscript.

\item []\textbf{Funding:} This work is supported by NSFC (No.\ 12671229) and Xi'an International Science and Technology Cooperation Base-Ergodic Theory and Dynamical Systems.

\item []\textbf{Availability of data and materials:} Not applicable.
\end{itemize}
\bibliographystyle{amsplain}
\bibliography{references}

\end{document}